\documentclass[12pt]{article}
\usepackage{amssymb, amsmath, amsthm, amscd}
\usepackage[pdftex]{graphics}
\usepackage[utf8]{inputenc}
\usepackage[all,cmtip]{xy}
\usepackage{enumitem}
\usepackage{setspace}
\usepackage{mathdots}

\usepackage[mathscr]{euscript}
\usepackage[colorlinks=true]{hyperref}
\hypersetup{colorlinks   = true}
\hypersetup{linkcolor=blue}
\addtolength{\textwidth}{4.2cm}
\addtolength{\voffset}{-2.5cm}
\addtolength{\textheight}{3.4cm}
\addtolength{\oddsidemargin}{-2cm}

\usepackage{tikz}
\usetikzlibrary{cd}

\begin{document}

\newtheorem{The}{Theorem}[section]
\newtheorem{Lem}[The]{Lemma}
\newtheorem{Prop}[The]{Proposition}
\newtheorem{Cor}[The]{Corollary}
\newtheorem{Rem}[The]{Remark}
\newtheorem{Obs}[The]{Observation}
\newtheorem{SConj}[The]{Standard Conjecture}
\newtheorem{Titre}[The]{\!\!\!\! }
\newtheorem{Conj}[The]{Conjecture}
\newtheorem{Question}[The]{Question}
\newtheorem{Prob}[The]{Problem}
\newtheorem{Def}[The]{Definition}
\newtheorem{Not}[The]{Notation}
\newtheorem{Claim}[The]{Claim}
\newtheorem{Conc}[The]{Conclusion}
\newtheorem{Ex}[The]{Example}
\newtheorem{Fact}[The]{Fact}
\newtheorem{Formula}[The]{Formula}
\newtheorem{Formulae}[The]{Formulae}
\newtheorem{The-Def}[The]{Theorem and Definition}
\newtheorem{Prop-Def}[The]{Proposition and Definition}
\newtheorem{Cor-Def}[The]{Corollary and Definition}
\newtheorem{Conc-Def}[The]{Conclusion and Definition}
\newtheorem{Terminology}[The]{Note on terminology}
\newcommand{\C}{\mathbb{C}}
\newcommand{\R}{\mathbb{R}}
\newcommand{\N}{\mathbb{N}}
\newcommand{\Z}{\mathbb{Z}}
\newcommand{\Q}{\mathbb{Q}}
\newcommand{\Proj}{\mathbb{P}}
\newcommand{\Rc}{\mathcal{R}}
\newcommand{\Oc}{\mathcal{O}}
\newcommand{\Vc}{\mathcal{V}}
\newcommand{\Id}{\operatorname{Id}}
\newcommand{\pr}{\operatorname{pr}}
\newcommand{\rk}{\operatorname{rk}}
\newcommand{\del}{\partial}
\newcommand{\delbar}{\bar{\partial}}
\newcommand{\Cdot}{{\raisebox{-0.7ex}[0pt][0pt]{\scalebox{2.0}{$\cdot$}}}}
\newcommand\nilm{\Gamma\backslash G}
\newcommand\frg{{\mathfrak g}}
\newcommand{\fg}{\mathfrak g}
\newcommand{\Oh}{\mathcal{O}}
\newcommand{\Kur}{\operatorname{Kur}}
\newcommand\gc{\frg_\mathbb{C}}
\newcommand\jonas[1]{{\textcolor{green}{#1}}}
\newcommand\luis[1]{{\textcolor{red}{#1}}}
\newcommand\dan[1]{{\textcolor{blue}{#1}}}

\begin{center}

{\Large\bf Higher-Page Bott-Chern and Aeppli Cohomologies and Applications}

\end{center}

\begin{center}

{\large Dan Popovici, Jonas Stelzig and Luis Ugarte}

\end{center}

\vspace{1ex}

\noindent{\small{\bf Abstract.} For every positive integer $r$, we introduce two new cohomologies, that we call $E_r$-Bott-Chern and $E_r$-Aeppli, on compact complex manifolds. When $r=1$, they coincide with the usual Bott-Chern and Aeppli cohomologies, but they are coarser, respectively finer, than these when $r\geq 2$. They provide analogues in the Bott-Chern-Aeppli context of the $E_r$-cohomologies featuring in the Fr\"olicher spectral sequence of the manifold. We apply these new cohomologies in several ways to characterise the notion of page-$(r-1)$-$\partial\bar\partial$-manifolds that we introduced very recently. We also prove analogues of the Serre duality for these higher-page Bott-Chern and Aeppli cohomologies and for the spaces featuring in the Fr\"olicher spectral sequence. We obtain a further group of applications of our cohomologies to the study of Hermitian-symplectic and strongly Gauduchon metrics for which we show that they provide the natural cohomological framework.}

\vspace{2ex}

\section{Introduction}\label{section:Introduction}

Let $X$ be an $n$-dimensional compact complex manifold.

\vspace{1ex}

 (I)\, We recall that the {\bf Fr\"olicher spectral sequence (FSS)} of $X$ is a collection of complexes canonically associated with the complex structure of $X$. Its {\it $0$-th page} $$\dots\stackrel{\bar\partial}{\longrightarrow}C^\infty_{p,\,q}(X,\,\C)\stackrel{\bar\partial}{\longrightarrow}C^\infty_{p,\,q+1}(X,\,\C)\stackrel{\bar\partial}{\longrightarrow}\dots$$ is given by the Dolbeault complexes of $X$, consisting of the spaces of $\C$-valued pure-type $C^\infty$ differential forms on $X$ together with differentials $d_0$ defined by the $\bar\partial$-operator. Each of the next pages can be computed as the cohomology of the previous page. In particular, the {\it $1$-st page} $$\dots\stackrel{d_1}{\longrightarrow}E_1^{p,\,q}(X)\stackrel{d_1}{\longrightarrow}E_1^{p+1,\,q}(X)\stackrel{d_1}{\longrightarrow}\dots$$ features the Dolbeault cohomology groups $H^{p,\,q}_{\bar\partial}(X,\,\C)=E_1^{p,\,q}(X)$ of $X$ as vector spaces, while the differentials $d_1$ are induced by $\partial$ as $d_1(\{\alpha\}_{\bar\partial}) = \{\partial\alpha\}_{\bar\partial}$. In general, for every $r\in\N$, the differentials $d_r$ on the {\it $r$-th page} $$\dots\stackrel{d_r}{\longrightarrow}E_r^{p,\,q}(X)\stackrel{d_r}{\longrightarrow}E_r^{p+r,\,q-r+1}(X)\stackrel{d_r}{\longrightarrow}\dots$$ are of type $(r,\,-r+1)$.

Thus, the higher pages (namely, those corresponding to $r\geq 2$) of the FSS are refinements of the Dolbeault cohomology of $X$. The main interest of this spectral sequence is that it provides a link between the complex structure of $X$ and its differential structure. Indeed, the FSS converges to the {\it De Rham cohomology} of $X$ in the sense that there are (non-canonical) {\it isomorphisms}: $$H^k_{DR}(X,\,\C)\simeq\bigoplus\limits_{p+q=k}E_\infty^{p,\,q}(X), \hspace{3ex} k\in\{0,\dots , 2n\}.$$

Since $C^\infty_{p,\,q}(X,\,\C)=0$ unless $p,q\in \{0,...,n\}$, there exists a positive integer $r$ such that all the differentials $d_s$ vanish identically for all $s\geq r$. This is equivalent to saying that the spectral sequence becomes stationary from the $r$-th page in the sense that $$E_r^{p,\,q}(X) = E_{r+1}^{p,\,q}(X) = \dots , \hspace{3ex} p,q\in\{0,\dots , n\}.$$ This common vector space is denoted by $E_\infty^{p,\,q}(X)$. We say in this case that the spectral sequence {\it degenerates at the $r$-th page} (or {\it at $E_r$}) and we denote this property by $E_r(X) = E_\infty(X)$. Obviously, the smaller $r$, the stronger the degeneration property.

\vspace{2ex}

(II)\, On the other hand, the classical {\bf Bott-Chern cohomology} ([BC65]) provides a different kind of refinement of the Dolbeault cohomology, while the equally classical {\bf Aeppli cohomology} ([Aep62]) is a coarser replacement thereof. They have a wide range of applications, especially in non-K\"ahler geometry, but also in Arakelov geometry and in Index Theory. (See e.g. [Bis13] and the references therein.) They are respectively defined by $$H^{\bullet,\,\bullet}_{BC}(X,\,\C) = \frac{\ker\partial\cap\ker\bar\partial}{\mbox{Im}\,(\partial\bar\partial)} \hspace{3ex} \mbox{and} \hspace{3ex} H^{\bullet,\,\bullet}_A(X,\,\C) = \frac{\ker(\partial\bar\partial)}{\mbox{Im}\,\partial + \mbox{Im}\,\bar\partial}$$ and the identity induces natural maps
\[
\begin{tikzcd}
&H_{DR}^{p+q}(X,\,\C)\ar[rd]&\\
H_{BC}^{p,\, q}(X)\ar[ru]\ar[rd]&&H_A^{p,\, q}(X).\\
& E_1^{p,\, q}(X)\ar[ru]&
\end{tikzcd}
\]

Moreover, a Serre-type {\it duality} between the Bott-Chern cohomology of any bidegree and the {\it Aeppli cohomology} of the complementary bidegree (see e.g. [Sch07]), established by the {\it well-defined, canonical, non-degenerate} pairings: $$H^{p,\,q}_{BC}(X)\times H^{n-p,\,n-q}_A(X)\longrightarrow\C, \hspace{3ex} \bigg(\{\alpha\}_{BC},\,\{\beta\}_A\bigg)\longmapsto\int\limits_X\alpha\wedge\beta,$$ for all $p,q=0,\dots , n$, can be viewed as a generalisation to the transcendental context of the classical duality in algebraic geometry between {\it curves} and {\it divisors}. (Indeed, the current of integration on a divisor, respectively a curve, represents a cohomology class of bidegree $(1,\,1)$, respectively of bidegree $(n-1,\,n-1)$. The Serre duality between the Dolbeault cohomology groups of bidegrees $(1,\,1)$ and $(n-1,\,n-1)$ provides a framework for the divisor/curve duality.)

Thus, in many respects, the classical Bott-Chern and Aeppli cohomologies are related to the first page of the FSS, namely to the Dolbeault cohomology of $X$, to which they are canonically isomorphic on the important class of {\it $\partial\bar\partial$-manifolds}. This class was introduced by Deligne, Griffiths, Morgan and Sullivan [DGMS75], while this name was proposed in [Pop14, Definition 1.6].

\begin{Def}([DGMS75])\label{Def:dd-bar-lemma} A compact complex manifold $X$ is said to be a $\partial\bar\partial$-{\bf manifold} if for any $d$-closed {\it pure-type} form $u$ on $X$, the following exactness properties are equivalent: \\

\hspace{10ex} $u$ is $d$-exact $\Longleftrightarrow$ $u$ is $\partial$-exact $\Longleftrightarrow$ $u$ is $\bar\partial$-exact $\Longleftrightarrow$ $u$ is $\partial\bar\partial$-exact. 

\end{Def}

The $\partial\bar\partial$-manifolds behave cohomologically as compact K\"ahler manifolds, hence, in particular, they support a classical {\it Hodge decomposition}, namely {\it canonical isomorphisms}: $$H^k_{DR}(X,\,\C)\simeq\bigoplus\limits_{p+q=k}E_1^{p,\,q}(X), \hspace{3ex} k\in\{0,\dots , 2n\},$$ induced by the identity map using the fact that every Dolbeault cohomology class $\mathfrak{c}\in E_1^{\bullet,\,\bullet}(X)$ can be represented by a {\it $d$-closed} pure-type form on any $\partial\bar\partial$-manifold. In fact, the {\it $\partial\bar\partial$-manifolds} are characterised by the Hodge decomposition holding in the {\it canonical} sense just described.

But such an $X$ need not carry any K\"ahler metric and, in fact, the class of $\partial\bar\partial$-manifolds strictly contains the class of compact K\"ahler (and even that of {\it class ${\cal C}$}) manifolds.

\vspace{2ex}

(III)\, In [PSU20], we introduced, for every positive integer $r\in\N^\star$, the class of {\bf page-$(r-1)$-$\partial\bar\partial$-manifolds} by requiring the analogue of the Hodge decomposition to hold {\bf canonically} when the Dolbeault cohomology spaces $E_1^{\bullet,\,\bullet}(X)$ are replaced by the spaces $E_r^{\bullet,\,\bullet}(X)$ featuring on the $r$-th page of the FSS.

\begin{Def}([PSU20, Definition 2.7.])\label{Def:page_r-1_ddbar} Fix an arbitrary positive integer $r$. A compact $n$-dimensional complex manifold $X$ is said to be a {\bf page-$(r-1)$-$\partial\bar\partial$-manifold} if $X$ has the {\bf $E_r$-Hodge Decomposition} property in the sense that the identity map induces an isomorphism $$\bigoplus_{p+q=k}E_r^{p,\,q}(X)\simeq H^k_{DR}(X,\,\C)$$ for every $k\in\{0,\dots , 2n\}$.

  This means that the following two conditions are satisfied:

\vspace{1ex}

 (a)\, for every bidegree $(p,\,q)$ with $p+q=k$, every class $\{\alpha^{p,\,q}\}_{E_r}\in E_r^{p,\,q}(X)$ contains a {\bf $d$-closed representative of pure type} $\alpha^{p,\,q}\in C^\infty_{p,\,q}(X)$;

 (b)\, the linear map $$\bigoplus_{p+q=k}E_r^{p,\,q}(X)\ni\sum\limits_{p+q=k}\{\alpha^{p,\,q}\}_{E_r}\mapsto\bigg\{\sum\limits_{p+q=k}\alpha^{p,\,q}\bigg\}_{DR}\in H^k_{DR}(X,\,\C)$$

\noindent is {\bf well-defined} (in the sense that it does not depend on the choices of $d$-closed representatives $\alpha^{p,\,q}$ of the classes $\{\alpha^{p,\,q}\}_{E_r}$) and {\bf bijective}.

\end{Def}  

Among other things, it was proved in [PSU20, Theorem and Definition 1.2.] that the {\bf page-$(r-1)$-$\partial\bar\partial$-property} of $X$ is equivalent to the following two properties holding simultaneously: the Fr\"olicher spectral sequence of $X$ {\bf degenerates at $E_r$} and the De Rham cohomology of $X$ is {\bf pure}. (See Definition 2.4. and Note on terminology 2.5. in [PSU20] for the last notion.)

\vspace{1ex}

In particular, page-$0$-$\partial\bar\partial$-manifolds coincide with the classical $\partial\bar\partial$-manifolds, but for every $r\geq 1$, the class of page-$r$-$\partial\bar\partial$-manifolds is new. Moreover, the page-$r$-$\partial\bar\partial$-property becomes weaker and weaker as $r$ increases in the sense that, for every $r\in\N$, the following implication holds: \\

\hspace{6ex} $X$ is a page-$r$-$\partial\bar\partial$-manifold $\implies$ $X$ is a page-$(r+1)$-$\partial\bar\partial$-manifold.

\vspace{1ex}

Thus, the theory of page-$r$-$\partial\bar\partial$-manifolds enables the extension of Hodge Theory to the higher pages of the FSS and, in particular, a further extension of the class of compact K\"ahler manifolds. Quite a few examples of classes of page-$r$-$\partial\bar\partial$-manifolds that are not $\partial\bar\partial$-manifolds in the classical sense were exhibited in [PSU20].

\subsection{The new cohomologies}\label{subsection:introd_new_cohom} The main goal of this work is to continue the construction of the higher-page Hodge Theory begun in [PSU20] by introducing {\it higher-page analogues of the Bott-Chern and Aeppli cohomologies} that provide a natural link between their classical counterparts and the Fr\"olicher spectral sequence and, simultaneously, parallel the higher-page Fr\"olicher cohomologies $E_r^{\bullet,\,\bullet}(X)$ when $r\geq 2$.

The new terminology that we introduce in this paper can be divided into two main groups.

\vspace{1ex}

(a)\, A $(p,\,q)$-form $\alpha$ on $X$ is said to be {\it $E_r$-exact} if $\alpha$ represents the {\it zero $E_r$-cohomology class} on the $r$-th page of the Fr\"olicher spectral sequence of $X$. Also, $\alpha$ is said to be {\it $\overline{E}_r$-exact} if $\bar\alpha$ is {\it $E_r$-exact}. When $r=1$, these notions coincide with $\bar\partial$-exactness, respectively $\partial$-exactness.  

The notion of {\it $E_r\overline{E}_r$-exactness} is introduced in (iii) of Definition \ref{Def:E_rE_r-bar} as a weakening, inspired by the characterisations of $E_r$-exactness and $\overline{E}_r$-exactness given in Proposition \ref{Prop:E_r-closed-exact_conditions}, of the standard notion of $\partial\bar\partial$-exactness. When $r=1$, the notion of $E_1\overline{E}_1$-exactness is defined as $\partial\bar\partial$-exactness.

The notion of {\it $E_r\overline{E}_r$-closedness} is introduced in (i) of Definition \ref{Def:E_rE_r-bar} as a strengthening of the standard notion of $\partial\bar\partial$-closedness. When $r=1$, the notion of $E_1\overline{E}_1$-closedness is defined as $\partial\bar\partial$-closedness. 

All the exactness notions become weaker and weaker as $r$ increases, in contrast to the closedness conditions that become stronger and stronger. 

\vspace{2ex}

(b)\, The {\bf $E_r$-Bott-Chern}, respectively the {\bf $E_r$-Aeppli}, cohomology groups of bidegree $(p,\,q)$ of $X$, denoted by $E_{r,\, BC}^{p,\, q}(X)$ and $E_{r,\, A}^{p,\, q}(X)$, are introduced in Definition \ref{Def:E_r-BC_E_r-A} by quotienting out

\vspace{1ex}

-the smooth $d$-closed $(p,\,q)$-forms by the $E_r\overline{E}_r$-exact ones;

\vspace{1ex}

-respectively, the smooth $E_r\overline{E}_r$-closed $(p,\,q)$-forms by those lying in $\mbox{Im}\,\partial + \mbox{Im}\,\bar\partial$.

\vspace{1ex}

\noindent When $r=1$, these cohomology groups coincide with the standard Bott-Chern, respectively Aeppli, cohomology groups. Moreover, for every $(p,\,q)$, one has a sequence of canonical linear {\bf surjections}: \begin{equation}\label{eqn:BC_seq-surjections}H_{BC}^{p,\, q}(X)=E_{1,\, BC}^{p,\, q}(X)\twoheadrightarrow E_{2,\, BC}^{p,\, q}(X)\twoheadrightarrow\dots\twoheadrightarrow E_{r,\, BC}^{p,\, q}(X)\twoheadrightarrow E_{r+1,\, BC}^{p,\, q}(X)\twoheadrightarrow\dots\end{equation} and a  sequence of {\bf subspaces} of $H_A^{p,\, q}(X)$: \begin{equation}\label{eqn:A_seq-inclusions}\dots\subset E_{r+1,A}^{p,\,q}(X)\subset E_{r,\, A}^{p,\,q}(X)\subset\dots\subset E_{1,A}^{p,\,q}(X)=H_A^{p,\, q}(X).\end{equation}

  Moreover, it follows from Lemma \ref{Lem:E_rE_r-bar-properties} that the identity induces canonical linear maps:
\[
\begin{tikzcd}
&H_{DR}^{p+q}(X,\,\C)\ar[rd]&\\
E_{r,\, BC}^{p,\, q}(X)\ar[ru]\ar[rd]&&E_{r,\, A}^{p,\, q}(X)\\
& E_r^{p,\, q}(X)\ar[ru]&
\end{tikzcd}
\]

\noindent for every positive integer $r$ and every bidegree $(p,\,q)$.

\vspace{3ex}

Recall that $\partial\bar\partial$-manifolds are characterised by the equivalence of various types of exactness properties for $d$-closed pure-type forms (cf. Definition \ref{Def:dd-bar-lemma}). One of our main results in this paper, which also provides some of the main applications of the two kinds of cohomologies that we introduce, is the following statement, which extends this to page-$r$-$\partial\bar\partial$-manifolds (see Theorem \ref{The: page-r-Characterisations}).

\begin{The}\label{The:higher-page_BC-A-ddbar_introd} 
For a compact complex manifold $X$, denote by $E_{r,\, BC}^{p,\, q}(X)$ and $E_{r,\, A}^{p,\, q}(X)$ the higher-page Bott-Chern and Aeppli cohomology groups introduced in Definition \ref{Def:E_r-BC_E_r-A}. The following properties are equivalent:
\begin{enumerate}
	\item $X$ is a \textbf{page-$(r-1)$-$\partial\bar\partial$-manifold}.
	\item For any $d$-closed $(p,q)$-form $\alpha$, the following properties are equivalent:
	\[
	\alpha\text{ is } d\text{-exact}\Longleftrightarrow\alpha\text{ is } E_r\text{-exact}\Longleftrightarrow\alpha\text{ is } \bar E_r\text{-exact}\Longleftrightarrow\alpha\text{ is }E_r\bar E_r\text{-exact}.
	\]
	\item For all $p,q\in\{0,...,n\}$, the natural map $E_{r,\, BC}^{p,\, q}(X)\rightarrow E_{r,\, A}^{p,\, q}(X)$ is an isomorphism.
	\item For all $p,q\in\{0,...n\}$, the map $E_{r,\, BC}^{p,\, q}(X)\rightarrow E_{r,\, A}^{p,\, q}(X)$ is injective.
	\item One has $\dim E_{r,\, BC}^k(X)=\dim E_{r,\, A}^k(X)$ for all $k\in\{0,...,2n\}$.

	\end{enumerate}
\end{The}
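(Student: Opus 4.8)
The plan is to establish the equivalence $(1)\Leftrightarrow(2)$ directly, and then to run the cycle $(1)\Rightarrow(3)\Rightarrow(4)\Rightarrow(5)\Rightarrow(1)$; together these yield all five equivalences. The external inputs I would rely on are: the characterisation from [PSU20] that the page-$(r-1)$-$\partial\bar\partial$-property is equivalent to the degeneration of the Fr\"olicher spectral sequence at $E_r$ together with the purity of $H^\bullet_{DR}(X,\C)$; the zig-zag descriptions of $E_r$- and $\bar E_r$-exactness in Proposition \ref{Prop:E_r-closed-exact_conditions}; the formal implications between the various closedness and exactness notions gathered in Lemma \ref{Lem:E_rE_r-bar-properties}; and the higher-page Serre duality $E_{r,\,BC}^{p,\,q}(X)\cong\big(E_{r,\,A}^{n-p,\,n-q}(X)\big)^\ast$ proved elsewhere in the paper. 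One link is immediate: $(3)\Rightarrow(4)$, since an isomorphism is injective.

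For $(1)\Leftrightarrow(2)$ I would first record the implications valid on any $X$, read off from Lemma \ref{Lem:E_rE_r-bar-properties}: for a $d$-closed pure-type form, $E_r\bar E_r$-exactness implies both $E_r$- and $\bar E_r$-exactness (the higher-page analogues of $\partial\bar\partial$-exact $\Rightarrow\bar\partial$-exact and $\partial\bar\partial$-exact $\Rightarrow\partial$-exact). The substance of $(2)$ is the reverse collapse, and this is where degeneration and purity enter: using condition (a) of Definition \ref{Def:page_r-1_ddbar} and the characterisation in Proposition \ref{Prop:E_r-closed-exact_conditions}, an $E_r$-exact (or $\bar E_r$-exact) $d$-closed pure-type form is shown to be $d$-exact, while a $d$-exact such form is $E_r\bar E_r$-exact; combined with the automatic implications this closes all four equivalences, and the converse $(2)\Rightarrow(1)$ feeds the exactness equivalences back through the [PSU20] characterisation. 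With $(1)\Leftrightarrow(2)$ in hand, $(1)\Rightarrow(3)$ amounts to unwinding the two quotients of Definition \ref{Def:E_r-BC_E_r-A}: the natural map $\iota\colon E_{r,\,BC}^{p,\,q}(X)\to E_{r,\,A}^{p,\,q}(X)$ is well-defined by Lemma \ref{Lem:E_rE_r-bar-properties}, its injectivity is the statement that a $d$-closed form in $\operatorname{Im}\partial+\operatorname{Im}\bar\partial$ is $E_r\bar E_r$-exact (which is $(2)$), and its surjectivity follows from condition (a), which provides a $d$-closed representative for any $E_r\bar E_r$-closed class modulo $\operatorname{Im}\partial+\operatorname{Im}\bar\partial$.

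The step $(4)\Rightarrow(5)$ is where duality does the counting. Injectivity of $\iota^{p,\,q}$ gives $\dim E_{r,\,BC}^{p,\,q}(X)\le\dim E_{r,\,A}^{p,\,q}(X)$ for every bidegree. Summing over all $(p,\,q)$ and using that Serre duality matches $\sum_{p,q}\dim E_{r,\,BC}^{p,\,q}(X)$ with $\sum_{p,q}\dim E_{r,\,A}^{p,\,q}(X)$ turns the inequality into an equality in each bidegree; in particular $\dim E_{r,\,BC}^k(X)=\dim E_{r,\,A}^k(X)$ for all $k$, which is $(5)$ (and, as a byproduct, $\iota$ is an isomorphism, recovering $(3)$).

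The crux is $(5)\Rightarrow(1)$, and it cannot be formal: Serre duality together with Poincar\'e duality only turns $(5)$ into the symmetry $\dim E_{r,\,A}^k(X)=\dim E_{r,\,A}^{2n-k}(X)$, which does not by itself kill the obstructions to degeneration and purity. To extract $(1)$ I would pass to the structure theory of the bounded double complex $\big(A^{\bullet,\bullet}(X),\partial,\bar\partial\big)$, decomposing it into indecomposable squares and zig-zags and expressing each of $\dim E_r^{p,q}(X)$, $\dim H^k_{DR}(X,\C)$, $\dim E_{r,\,BC}^{p,q}(X)$ and $\dim E_{r,\,A}^{p,q}(X)$ as a weighted count of zig-zags of prescribed length and position. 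In these terms the page-$(r-1)$-$\partial\bar\partial$-property is exactly the absence of the zig-zags obstructing degeneration at $E_r$ (the overly long ones) and purity (the odd ones), and the degree-wise equalities of $(5)$, combined with the duality symmetries, become a linear system in the non-negative multiplicities whose only solution forces every offending multiplicity to vanish. I expect the genuine difficulty to lie in this last bookkeeping: identifying precisely which zig-zag types are counted by $E_{r,\,BC}^{p,q}(X)$ and $E_{r,\,A}^{p,q}(X)$, and checking that $(5)$ annihilates exactly the ones whose presence would violate $(1)$.
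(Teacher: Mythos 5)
The parts of your plan that are sound are precisely the parts that coincide with the paper's proof: $(3)\Rightarrow(4)$ is trivial, your $(4)\Rightarrow(5)$ via the higher-page Serre duality (Theorem \ref{The:E_r_BC-A_duality}) is exactly the paper's argument, and your plan for $(5)\Rightarrow(1)$ via the decomposition into squares and zigzags is the paper's route (Lemma \ref{Lem: BCA on indecomposables}, Corollary \ref{Cor: dimensions of BCA on indecomposables}). The genuine gaps are in the two steps you claim can be done in an elementary way. The essential content of $(1)\Rightarrow(2)$ is the implication ``$d$-exact $\Rightarrow$ $E_r\overline{E}_r$-exact'' for pure-type forms on a page-$(r-1)$-$\partial\bar\partial$-manifold, and you assert it follows from condition (a) of Definition \ref{Def:page_r-1_ddbar} and Proposition \ref{Prop:E_r-closed-exact_conditions} without giving an argument. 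Conditions (a) and (b) do yield your other claim ($E_r$-exact and $d$-closed $\Rightarrow$ $d$-exact: a $d$-closed representative of the zero $E_r$-class must map to the zero De Rham class), but they provide no mechanism for producing the two towers of length $r-1$ demanded by (iii) of Definition \ref{Def:E_rE_r-bar}: writing a pure-type form as $d\beta$ and chasing the bidegree components of $d\beta=\alpha$ produces towers whose length is bounded only by $n$, not by $r-1$, and shortening them is exactly the nontrivial point. The paper says explicitly, after Theorem \ref{The:page_r-1_ddbar-equivalence_bis}, that it does not know an elementary proof of the full statement; it obtains (2) \emph{last}, as a formal consequence of (4), by factoring the identity-induced map $E_{r,\,BC}^{p,\,q}(X)\to E_{r,\,A}^{p,\,q}(X)$ through $E_r^{p,\,q}(X)$, $H_{DR}^{p+q}(X,\,\C)$ and $\overline{E}_r^{p,\,q}(X)$ and using that injectivity of a composition forces injectivity of its first factor. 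Since your $(1)\Rightarrow(3)$ (injectivity half) is fed by $(2)$, this single gap breaks your whole cycle.

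Second, your surjectivity argument in $(1)\Rightarrow(3)$ misapplies condition (a): that condition produces $d$-closed representatives of classes on the $r$-th page of the Fr\"olicher spectral sequence, whose representatives are in particular $\bar\partial$-closed; an $E_r$-Aeppli class is represented by an $E_r\overline{E}_r$-closed form, which in general is \emph{not} $\bar\partial$-closed, so condition (a) says nothing about it. In fact, surjectivity of $E_{r,\,BC}^{p,\,q}(X)\to E_{r,\,A}^{p,\,q}(X)$ amounts to showing that $\partial\beta$ and $\bar\partial\beta$ lie in $\mbox{Im}\,(\partial\bar\partial)$ for every $E_r\overline{E}_r$-closed $\beta$, and this does not follow from (2) either, since $E_r\overline{E}_r$-exactness is strictly weaker than $\partial\bar\partial$-exactness when $r\geq 2$. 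Finally, your suspicion that the bookkeeping in $(5)\Rightarrow(1)$ is the real difficulty is well founded, and the difficulty is more than computational: the degree-wise equalities in (5) are sums over indecomposable summands in which surpluses can cancel. For instance, a length-$3$ zigzag of type $L$ (contributing $2$ to $e_{r,\,BC}$ in total degree $k+1$ and nothing to $e_{r,\,A}$) together with a length-$3$ zigzag of type $M$ placed one degree higher (contributing $2$ to $e_{r,\,A}$ in the same degree $k+1$ and nothing to $e_{r,\,BC}$) satisfies (5) in every degree while violating (1), and neither conjugation symmetry nor numerical Serre duality excludes this configuration. What does decompose over the indecomposables and detect every offending zigzag is the map-level statement, i.e. injectivity of $E_{r,\,BC}\to E_{r,\,A}$; so the robust route runs through (4) rather than through raw dimension counts --- a point on which the corollary preceding the paper's proof of Theorem \ref{The: page-r-Characterisations} is itself less than fully detailed, since its argument also deduces the failure of the global numerical equality from its failure on a single summand.
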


\subsection{First set of further applications: duality results}\label{subsection:introd_duality}

Using the Hodge theory based on pseudo-differential Laplacians introduced in [Pop16] and [Pop19], we prove in $\S.$\ref{section:duality-Froelicher}, respectively $\S.$\ref{section:higher-page_BC-A}, the analogues of the Serre duality for the $E_r$-cohomology, respectively the $E_r$-Bott-Chern and $E_r$-Aeppli cohomologies, for all $r\geq 2$. The case $r=1$ is standard. These results can be summed up as follows (cf. Theorem \ref{The:duality_E2}, Corollaries \ref{Cor:well-definedness_Er_duality} and \ref{Cor:Er_duality}, Theorem \ref{The:E_r_BC-A_duality}).

\begin{The}\label{The:duality_introd} Let $X$ be a compact complex manifold with $\mbox{dim}_\C X=n$. Fix an arbitrary positive integer $r\in\N$. 

  For every $p,q\in\{0,\dots , n\}$, the canonical bilinear pairings

\vspace{1ex}

\hspace{12ex} $\displaystyle E_r^{p,\,q}(X)\times E_r^{n-p,\,n-q}(X)\longrightarrow\C, \hspace{3ex} (\{\alpha\}_{E_r},\,\{\beta\}_{E_r})\mapsto \int\limits_X\alpha\wedge\beta,$

\vspace{1ex}

\noindent and

\vspace{1ex}

\hspace{12ex} $\displaystyle E_{r,\,BC}^{p,\,q}(X)\times E_{r,\,A}^{n-p,\,n-q}(X)\longrightarrow\C, \hspace{3ex} (\{\alpha\}_{E_{r,\,BC}},\,\{\beta\}_{E_{r,\,A}})\mapsto \int\limits_X\alpha\wedge\beta,$

\noindent are {\bf well defined} and {\bf non-degenerate}.

\end{The}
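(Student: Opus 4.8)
The plan is to handle each of the two pairings by the same two-step scheme: first establish \emph{well-definedness}, i.e. descent of the integral to the quotient spaces, by pure integration-by-parts arguments, and then establish \emph{non-degeneracy} by representing classes through harmonic forms for the pseudo-differential Laplacians of [Pop16] and [Pop19] and transporting them across complementary bidegrees by a conjugate Hodge star. For $r=1$ both statements are the classical Serre duality and the classical Bott--Chern/Aeppli duality, so throughout I would assume $r\geq 2$.

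For well-definedness of the $E_r$-pairing I would use the zig-zag descriptions of $E_r$-closedness and $E_r$-exactness recorded in Proposition \ref{Prop:E_r-closed-exact_conditions}. Writing an $E_r$-exact $(p,q)$-form as $\alpha=\partial u_{r-1}+\bar\partial a$ with a chain $u_0,\dots,u_{r-1}$ satisfying $\bar\partial u_0=0$ and $\partial u_i=\bar\partial u_{i+1}$, and pairing it against an $E_r$-closed $(n-p,n-q)$-form $\beta$ carrying the dual chain, I would repeatedly move $\partial$ and $\bar\partial$ off one factor onto the other via Stokes' theorem on the compact $X$ (using $\int_X d(\,\cdot\,)=0$ and the bidegree bookkeeping that forces only top-type terms to survive). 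Each transfer matches one rung of the zig-zag of $\beta$ against the next rung of $\alpha$, and the telescope collapses to $0$; the symmetric computation covers the case where $\beta$ is $E_r$-exact. The same mechanism yields well-definedness of the Bott--Chern/Aeppli pairing: if $\alpha$ is $d$-closed then $\int_X\alpha\wedge(\partial u+\bar\partial v)=0$ by Stokes, which kills the $\operatorname{Im}\partial+\operatorname{Im}\bar\partial$ ambiguity of the Aeppli factor, while the $E_r\bar E_r$-exactness (Definition \ref{Def:E_rE_r-bar}) of the Bott--Chern ambiguity pairs trivially against an $E_r\bar E_r$-closed form by the same telescoping, using the compatibilities of Lemma \ref{Lem:E_rE_r-bar-properties}.

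For non-degeneracy I would pass to Hodge theory. By [Pop16] (for $r=2$) and [Pop19] (general $r$) each $E_r^{p,q}(X)$ is finite-dimensional and represented by the kernel $\mathcal H_r^{p,q}$ of a pseudo-differential Laplacian $\tilde\Delta_r$; likewise $E_{r,\,BC}^{p,q}(X)$ and $E_{r,\,A}^{p,q}(X)$ are represented by kernels of pseudo-differential Bott--Chern and Aeppli Laplacians. The structural input I would isolate and prove is that the conjugate Hodge star $\alpha\mapsto\star\bar\alpha$, which sends type $(p,q)$ to type $(n-p,n-q)$, intertwines these operators in complementary bidegrees: it maps $\mathcal H_r^{p,q}$ isomorphically onto $\mathcal H_r^{n-p,n-q}$, and the Bott--Chern harmonic space of bidegree $(p,q)$ onto the Aeppli harmonic space of bidegree $(n-p,n-q)$. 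Granting this, non-degeneracy is immediate: given a nonzero class with harmonic representative $\alpha$, the partner $\beta:=\star\bar\alpha$ represents a class of the complementary type and $\int_X\alpha\wedge\beta=\int_X\alpha\wedge\star\bar\alpha=\|\alpha\|^2>0$. Running this on both factors shows the induced maps into the dual spaces are injective, and since $\star\bar{\,\cdot\,}$ identifies the two harmonic spaces the dimensions agree, so each pairing is perfect.

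The main obstacle is precisely this last compatibility: for $r\geq 2$ the operators $\tilde\Delta_r$ and their Bott--Chern and Aeppli analogues are genuinely pseudo-differential and $r$-dependent, so the commutation of $\star$ with $\partial,\bar\partial$ and their formal adjoints that makes the $r=1$ argument routine no longer applies verbatim. I would therefore track the explicit construction of these Laplacians from [Pop16] and [Pop19] and verify that $\star\bar{\,\cdot\,}$ conjugates $\tilde\Delta_r^{p,q}$ into $\tilde\Delta_r^{n-p,n-q}$ (respectively the Bott--Chern Laplacian into the Aeppli Laplacian), i.e. that the defining projections and parametrices are self-dual under $\star\bar{\,\cdot\,}$. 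Equivalently, one can recast everything through the decomposition of the bounded bicomplex $(C^\infty_{\bullet,\,\bullet}(X),\partial,\bar\partial)$ into indecomposable zig-zags, under which $\star\bar{\,\cdot\,}$ induces a duality exchanging a zig-zag with its reverse; checking that this duality matches $E_r$ with $E_r$ and $E_{r,\,BC}$ with $E_{r,\,A}$ in complementary bidegrees then gives non-degeneracy at one stroke. I expect the analytic verification to be the technical heart, with the algebraic reformulation serving as a conceptual cross-check.
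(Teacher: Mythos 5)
Your treatment of the first pairing matches the paper's almost exactly: well-definedness is Corollary \ref{Cor:well-definedness_Er_duality} (Stokes' theorem telescoped against the towers of Proposition \ref{Prop:E_r-closed-exact_conditions}), and non-degeneracy is Corollary \ref{Cor:Er_duality}, obtained by taking the harmonic representative $\alpha\in{\cal H}_r^{p,\,q}$ of a nonzero class and pairing it with $\star\bar\alpha$, giving $||\alpha||^2>0$. The one methodological difference is how the key compatibility (that $\star\bar{\,\cdot\,}$ maps ${\cal H}_r^{p,\,q}$ onto ${\cal H}_r^{n-p,\,n-q}$) is established: you propose to conjugate the pseudo-differential Laplacians $\widetilde\Delta^{(r)}$ and check that their projections and parametrices are self-dual, whereas the paper never conjugates any operator. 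It instead characterises ${\cal H}_r^{p,\,q}$ by first-order conditions --- $E_r$-closed and $E_r^\star$-closed (Proposition \ref{Prop:E_r-closedness-star-harmonicity}) --- and then observes that $\star\bar{\,\cdot\,}$ exchanges the two towers of equations term by term, because $-\star\partial\star=\bar\partial^\star$ and $-\star\bar\partial\star=\partial^\star$ (Corollary \ref{Cor:E_r_duality}). This sidesteps the inductive bookkeeping on $D_{r-1}$ and $p_r$ that your route would require; your route is plausible for this half but substantially heavier.

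The genuine gap is in the Bott--Chern/Aeppli half. You write that, ``likewise,'' $E_{r,\,BC}^{p,\,q}(X)$ and $E_{r,\,A}^{p,\,q}(X)$ are represented by kernels of pseudo-differential Bott--Chern and Aeppli Laplacians by [Pop16] and [Pop19]. No such result exists there: these cohomologies are introduced for the first time in the present paper (Definition \ref{Def:E_r-BC_E_r-A}), so their Hodge theory cannot be cited --- constructing it is precisely the technical heart of the non-degeneracy proof, and your proposal treats it as known input while only flagging the $\star$-compatibility as requiring proof. Concretely, what must be proved is Lemma \ref{Lem:star-duality_1}: a $(p,\,q)$-form is $L^2_\omega$-orthogonal to the space of $E_r\overline{E}_r$-exact forms if and only if it is $E_r^\star\overline{E}_r^\star$-closed in the sense of Definition \ref{Def:E_r_star_E_r-bar_star}. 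The implication from orthogonality to the tower of adjoint equations is not an integration-by-parts formality; in the paper it is a delicate induction using the $3$-space decompositions of Proposition \ref{Prop:appendix_3-space_decomp_E-F}. Only with this lemma do the Hodge isomorphisms of Corollary-Definitions \ref{Cor-Def:E_r-BC_Hodge-isom} and \ref{Cor-Def:E_r-A_Hodge-isom} exist, after which your argument (Lemma \ref{Lem:E_rE_r-bar_duality} plus $\int_X\alpha\wedge\star\bar\alpha=||\alpha||^2$) concludes exactly as in Theorem \ref{The:E_r_BC-A_duality}. Your algebraic fallback is acknowledged as viable in Remark \ref{Rem:alternative-approaches_E_r-duality} via [Ste20], but note it is a genuinely different proof: the zigzag decomposition is not orthogonal and $\star\bar{\,\cdot\,}$ does not act on it; what that route needs is the purely algebraic computation of $E_{r,\,BC}$ and $E_{r,\,A}$ on each indecomposable zigzag (as in Lemma \ref{Lem: BCA on indecomposables}) together with the duality of a double complex with its reverse, with the only analysis being classical elliptic theory.
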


This means that every space $E_r^{n-p,\,n-q}(X)$ can be viewed as the {\bf dual} of $E_r^{p,\,q}(X)$ and every space $E_{r,\,BC}^{p,\,q}(X)$ can be viewed as the {\bf dual} of $E_{r,\,A}^{n-p,\,n-q}(X)$.

Our method is analytical and Hodge-theoretical, although more algebraic approaches are, to a certain extend, possible (see Remark \ref{Rem:alternative-approaches_E_r-duality}). This  provides finer information useful for applications, e.g. in section \ref{section:applications_HS-SKT-sGG}.
\subsection{Second set of further applications: special metrics}\label{subsection:introd_special-metrics}

A final group of applications of the new cohomologies that we introduce in this paper features in section \ref{section:applications_HS-SKT-sGG} where it is shown that the higher-page Aeppli cohomologies provide the natural framework for the study of {\bf Hermitian-symplectic (H-S)} and {\bf strongly Gauduchon (sG)} metrics and manifolds. (See Proposition \ref{Prop:sG-HS_E_2A}.)

In particular, we introduce the {\it strongly Gauduchon (sG) cone} ${\cal SG}_X\subset E_{2,A}^{n-1,\,n-1}(X,\,\R)$ (in a way different to the one it was defined in [Pop15] and [PU18]), the {\it Hermitian-symplectic (H-S) cone} ${\cal HS}_X\subset E_{3,A}^{1,\,1}(X,\,\R)$ and the {\it SKT cone} ${\cal SKT}_X\subset E_{1,A}^{1,\,1}(X,\,\R) = H_A^{1,\,1}(X,\,\R)$ of $X$ (see Definition \ref{Def:E_rA_pos-cones}) that are then shown to be {\bf open convex cones} in their respective cohomology vector spaces (see Lemma \ref{Lem:E_rA_pos-cones}).

We go on to use these cones to obtain:

\vspace{1ex}

-a new numerical characterisation of {\it sGG manifolds} (introduced in [Pop15] and [PU18]) besides those obtained in [PU18] (one of which was $b_1=2h^{0,\,1}_{\bar\partial}$, while the inequality $b_1\leq 2h^{0,\,1}_{\bar\partial}$ holds on every manifold -- see Theorem 1.6 in [PU18]);

\vspace{1ex}

-a numerical characterisation of compact complex SKT manifolds on which every SKT metric is Hermitian-symplectic. 

\vspace{1ex}

Specifically, we prove the following fact (see Corollary \ref{Cor:cone-equalities} for further details), where $e$ stands each time for the dimension of the corresponding higher-page Aeppli cohomology space of $X$ denoted by $E$.

\begin{Prop}\label{Prop:introd_cone-equalities} Let $X$ be a compact complex manifold with $\mbox{dim}_\C X=n$. 

  \vspace{1ex}

  (i)\, The inequality $e_{2,A}^{n-1,\,n-1} \leq e_{1,A}^{n-1,\,n-1}$ holds. Moreover, $X$ is an {\bf sGG manifold} if and only if $e_{2,A}^{n-1,\,n-1} = e_{1,A}^{n-1,\,n-1}$.

  \vspace{1ex}

  (ii)\, The inequality $e_{3,A}^{1,\,1} \leq e_{1,A}^{1,\,1}$ holds. Moreover, {\bf every SKT metric on $X$ is Hermitian-symplectic (H-S)} if and only if $e_{3,A}^{1,\,1} = e_{1,A}^{1,\,1}$.

\end{Prop}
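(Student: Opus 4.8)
The plan is to treat each of (i) and (ii) by splitting it into the inequality, which is purely cohomological and immediate, and the equality characterisation, which I would reduce to an elementary fact about open cones inside a vector space. For the inequalities I would read them straight off the inclusion chain (\ref{eqn:A_seq-inclusions}): the single inclusion $E_{2,\,A}^{n-1,\,n-1}(X)\subseteq E_{1,\,A}^{n-1,\,n-1}(X)$ gives $e_{2,A}^{n-1,\,n-1}\leq e_{1,A}^{n-1,\,n-1}$, while the two-step inclusion $E_{3,\,A}^{1,\,1}(X)\subseteq E_{2,\,A}^{1,\,1}(X)\subseteq E_{1,\,A}^{1,\,1}(X)$ gives $e_{3,A}^{1,\,1}\leq e_{1,A}^{1,\,1}$. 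No analysis enters at this stage.

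For the equality in (i), the key input is Proposition \ref{Prop:sG-HS_E_2A}, which I would use in the form: a Gauduchon metric $\omega$ is strongly Gauduchon if and only if its Aeppli class $\{\omega^{n-1}\}_A\in E_{1,A}^{n-1,\,n-1}(X,\,\R)=H_A^{n-1,\,n-1}(X,\,\R)$ actually lies in the subspace $E_{2,A}^{n-1,\,n-1}(X,\,\R)$. Since the Gauduchon cone ${\cal G}_X$ is, by definition, the set of such classes, the condition that $X$ be an sGG manifold --- every Gauduchon metric is strongly Gauduchon --- becomes exactly the inclusion ${\cal G}_X\subseteq E_{2,A}^{n-1,\,n-1}(X,\,\R)$. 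Now I would invoke two structural facts: Gauduchon's theorem makes ${\cal G}_X$ nonempty, and Lemma \ref{Lem:E_rA_pos-cones} makes it open in $E_{1,A}^{n-1,\,n-1}(X,\,\R)$. A nonempty open subset of a finite-dimensional real vector space affinely spans it, hence cannot be contained in a proper linear subspace; so ${\cal G}_X\subseteq E_{2,A}^{n-1,\,n-1}(X,\,\R)$ forces the subspace to be all of $E_{1,A}^{n-1,\,n-1}(X,\,\R)$, i.e. $e_{2,A}^{n-1,\,n-1}=e_{1,A}^{n-1,\,n-1}$. The converse is the easy direction: the dimension equality together with the inclusion upgrades to $E_{2,A}^{n-1,\,n-1}=E_{1,A}^{n-1,\,n-1}$ as spaces, so every Gauduchon class lies in $E_{2,A}^{n-1,\,n-1}$ and every Gauduchon metric is then strongly Gauduchon.

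The equality in (ii) follows the identical template, with Proposition \ref{Prop:sG-HS_E_2A} now furnishing the equivalence that an SKT metric $\omega$ is Hermitian-symplectic if and only if $\{\omega\}_A\in E_{1,A}^{1,\,1}(X,\,\R)=H_A^{1,\,1}(X,\,\R)$ lies in the deeper subspace $E_{3,A}^{1,\,1}(X,\,\R)$, and with Lemma \ref{Lem:E_rA_pos-cones} giving openness of the SKT cone ${\cal SKT}_X$ in $E_{1,A}^{1,\,1}(X,\,\R)$. If every SKT metric is H-S then ${\cal SKT}_X\subseteq E_{3,A}^{1,\,1}(X,\,\R)$, and the same open-set-in-a-subspace argument forces $E_{3,A}^{1,\,1}=E_{1,A}^{1,\,1}$; the converse is again obtained by reading the dimension equality as the space equality $E_{3,A}^{1,\,1}=E_{1,A}^{1,\,1}$ and observing that every SKT class is then an $E_3$-Aeppli class. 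The one point where (ii) genuinely differs from (i), and which I expect to be the main obstacle, is that the SKT cone may be empty: whereas Gauduchon metrics always exist, SKT metrics need not. If ${\cal SKT}_X=\emptyset$ the hypothesis that every SKT metric is H-S holds vacuously while the spanning argument has no content, so the forward implication can fail. The clean equivalence therefore has to be understood, as in Corollary \ref{Cor:cone-equalities}, under the standing assumption that $X$ admits an SKT metric --- the only situation in which the statement carries geometric information.
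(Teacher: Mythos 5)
Your proposal follows essentially the same route as the paper, which packages the argument in Section \ref{section:applications_HS-SKT-sGG}: Proposition \ref{Prop:sG-HS_E_2A} and Definition \ref{Def:E_rA_pos-cones} turn the metric conditions into membership of Aeppli classes in the subspaces $E_{2,A}^{n-1,\,n-1}(X,\,\R)$, resp. $E_{3,A}^{1,\,1}(X,\,\R)$; Lemma \ref{Lem:cone-intersections} records the identities ${\cal SG}_X={\cal G}_X\cap E_{2,A}^{n-1,\,n-1}(X,\,\R)$ and ${\cal HS}_X={\cal SKT}_X\cap E_{3,A}^{1,\,1}(X,\,\R)$; and Corollary \ref{Cor:cone-equalities} concludes by exactly your open-cone spanning argument. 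Two bookkeeping points. First, the openness you need in (i) is that of the Gauduchon cone ${\cal G}_X$ inside $H_A^{n-1,\,n-1}(X,\,\R)$, which is the result of [Pop15] recalled just before Lemma \ref{Lem:E_rA_pos-cones}; Lemma \ref{Lem:E_rA_pos-cones} itself only covers ${\cal SG}_X$, ${\cal HS}_X$ and ${\cal SKT}_X$ (though the same harmonic-perturbation argument applies). Second, your reformulation of Proposition \ref{Prop:sG-HS_E_2A} --- ``the Aeppli class lies in the subspace'' rather than ``$\omega^{n-1}$ itself is $E_2\overline{E}_2$-closed'' --- silently uses that $E_r\overline{E}_r$-closedness is unaffected by adding elements of $\mbox{Im}\,\partial+\mbox{Im}\,\bar\partial$; this is a one-line check, and it is equally implicit in the paper's proof of Lemma \ref{Lem:cone-intersections}.

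Your caveat about (ii) is not a defect of your proof but a genuine observation about the statement itself. The paper's proof of Corollary \ref{Cor:cone-equalities} (``follows at once from Lemmas \ref{Lem:E_rA_pos-cones} and \ref{Lem:cone-intersections}'') is implicitly the same spanning argument, and therefore also needs ${\cal SKT}_X\neq\emptyset$ for the implication (every SKT metric on $X$ is H-S) $\Rightarrow$ $e_{3,A}^{1,\,1}=e_{1,A}^{1,\,1}$. The asymmetry with (i) is precisely the one you identify: Gauduchon metrics always exist by [Gau77], so ${\cal G}_X$ is never empty, whereas SKT metrics need not exist; on a manifold carrying no SKT metric the left-hand condition in (ii) holds vacuously while nothing constrains $e_{3,A}^{1,\,1}$. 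So the clean statement of (ii) should carry the hypothesis that $X$ admits an SKT metric --- consistent with the paper's own application to the Streets--Tian question, which is phrased for ``every SKT manifold $X$'' --- and your reading is the correct repair.
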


Recall that Streets and Tian asked in [ST10, Question 1.7] whether every H-S manifold of dimension $n\geq 3$ is K\"ahler. If the answer to this question is affirmative, then (ii) of Proposition \ref{Prop:introd_cone-equalities} implies that every SKT manifold $X$ for which $e_{3,A}^{1,\,1} = e_{1,A}^{1,\,1}$ is K\"ahler. 

\section{Serre-type duality for the Fr\"olicher spectral sequence}\label{section:duality-Froelicher}

Let $X$ be a compact complex manifold with $\mbox{dim}_\C X=n$. For every $r\in\N$, we let $E_r^{p,\,q}(X)$ stand for the space of bidegree $(p,\,q)$ featuring on the $r^{th}$ page of the Fr\"olicher spectral sequence of $X$. 

The first page of this spectral sequence is given by the Dolbeault cohomology of $X$, namely $E_1^{p,\,q}(X)=H^{p,\,q}_{\bar\partial}(X)$ for all $p,q$. Moreover, the classical Serre duality asserts that every space $H^{p,\,q}_{\bar\partial}(X)$ is the dual of $H^{n-p,\,n-q}_{\bar\partial}(X)$ via the canonical non-degenerate bilinear pairing $$H^{p,\,q}_{\bar\partial}(X)\times H^{n-p,\,n-q}_{\bar\partial}(X)\longrightarrow\C, \hspace{6ex} \bigg([\alpha]_{\bar\partial},\,[\beta]_{\bar\partial}\bigg)\mapsto \int\limits_X\alpha\wedge\beta.$$

In this section, we will extend this duality to all the pages of the Fr\"olicher spectral sequence. For the sake of perspicuity, we will first treat the case $r=2$ and then the more technically involved case $r\geq 3$.

\subsection{Serre-type duality for the second page of the Fr\"olicher spectral sequence}\label{subsection:duality_E2}

 The main ingredient in the proof of the next statement is the Hodge theory for the $E_2$-cohomology introduced in [Pop16] via the construction of a pseudo-differential Laplace-type operator $\widetilde\Delta$.

\begin{The}\label{The:duality_E2} For every $p,q\in\{0,\dots , n\}$, the canonical bilinear pairing \begin{equation}\label{eqn:duality_E2}E_2^{p,\,q}(X)\times E_2^{n-p,\,n-q}(X)\longrightarrow\C, \hspace{6ex} \bigg(\{\alpha\}_{E_2},\,\{\beta\}_{E_2}\bigg)\mapsto \int\limits_X\alpha\wedge\beta,\end{equation}

\noindent is {\bf well defined} (i.e. independent of the choices of representatives of the cohomology classes involved) and {\bf non-degenerate}. 

\end{The}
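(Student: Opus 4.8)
The plan is to establish the two required properties separately, beginning with \textbf{well-definedness}. For this I first need an explicit description of when a $(p,q)$-form represents the zero class on the second page, i.e.\ of $E_2$-exactness, and dually of which forms represent a class at all. Recall that the second page is computed as $E_2^{p,q} = Z_2^{p,q}/B_2^{p,q}$, where a form $\alpha$ contributes a class precisely when $\bar\partial\alpha = 0$ and $\partial\alpha \in \mbox{Im}\,\bar\partial$ (so that $d_1[\alpha]_{\bar\partial}=0$), and $\alpha$ is $E_2$-exact when $\alpha = \bar\partial u + \partial v$ with $\bar\partial v = 0$ (the image of $d_0$ and $d_1$). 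Taking two representatives $\alpha, \alpha'$ of the same $E_2$-class and similarly $\beta, \beta'$, I would write their differences as such $E_2$-exact combinations and verify that $\int_X \alpha\wedge\beta$ is unchanged. The computation reduces to checking that $\int_X (\bar\partial u + \partial v)\wedge\beta = 0$ whenever $\beta$ represents an $E_2$-class of complementary bidegree, using Stokes' theorem together with the conditions $\bar\partial\beta = 0$ and $\partial\beta \in \mbox{Im}\,\bar\partial$, and symmetrically in the other variable. The interplay of the $\partial v$ term with the condition $\partial\beta \in \mbox{Im}\,\bar\partial$ is what makes the second page (rather than just Dolbeault) the correct level at which the pairing descends.

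The substantial part is \textbf{non-degeneracy}, and here the plan is to invoke the Hodge theory for $E_2$-cohomology built in [Pop16] via the pseudo-differential Laplacian $\widetilde\Delta$. The key input is that $\widetilde\Delta$ provides, for each bidegree, a finite-dimensional space of \emph{$\widetilde\Delta$-harmonic} forms $\mathcal{H}^{p,q}_{\widetilde\Delta}$ together with a canonical isomorphism $\mathcal{H}^{p,q}_{\widetilde\Delta} \cong E_2^{p,q}(X)$, so that every $E_2$-class has a distinguished harmonic representative. I would then fix a Hermitian metric and relate the Hodge star $*$ to the pairing: the pairing $\int_X \alpha\wedge\beta$ is, up to the standard conjugation, the $L^2$ inner product $\langle\!\langle \alpha, \bar{*}\beta\rangle\!\rangle$, so non-degeneracy will follow if I can show that $\bar{*}$ (or an appropriate modification of it adapted to $\widetilde\Delta$) maps $\widetilde\Delta$-harmonic $(p,q)$-forms isomorphically onto $\widetilde\Delta$-harmonic $(n-p,n-q)$-forms. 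Concretely, I expect $\widetilde\Delta$ to commute with $\bar{*}$, or at least that $\bar{*}$ intertwines $\widetilde\Delta$ in bidegree $(p,q)$ with its analogue in the complementary bidegree; granting this, a nonzero harmonic representative $\alpha$ pairs nontrivially with $\bar{*}\alpha$, which is again harmonic, giving non-degeneracy on both sides.

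The main obstacle I anticipate is precisely this commutation of the Hodge star with the pseudo-differential operator $\widetilde\Delta$. Unlike the ordinary Laplacian, $\widetilde\Delta$ is constructed by inverting certain operators on $\mbox{Im}$/$\mbox{ker}$ pieces of $\partial$ and $\bar\partial$, and its formal adjoint structure with respect to $*$ is not automatic; I would need to trace through the definition of $\widetilde\Delta$ in [Pop16] and check that the duality $*$ (equivalently, passing to formal adjoints) exchanges the building blocks of $\widetilde\Delta^{p,q}$ with those of $\widetilde\Delta^{n-p,n-q}$. An alternative, possibly cleaner route that I would keep in reserve is to avoid pinning down the exact symmetry of $\widetilde\Delta$ and instead argue non-degeneracy abstractly: use the harmonic representatives merely to get finite dimensionality and to produce, for each nonzero class $\{\alpha\}_{E_2}$, a test form $\beta$ of complementary bidegree with $\int_X\alpha\wedge\beta\neq 0$, where $\beta$ is obtained by Hodge-theoretic projection of $\bar{*}\alpha$ onto the harmonic space. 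Combined with the already-established well-definedness and the equality $\dim E_2^{p,q} = \dim E_2^{n-p,n-q}$ (itself a consequence of $\bar{*}$ being a bijection at the level of harmonic spaces), a nondegenerate-on-one-side pairing between finite-dimensional spaces of equal dimension is automatically a perfect pairing, which closes the argument.
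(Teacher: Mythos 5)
Your plan follows the paper's proof in both halves: well-definedness is established exactly as you describe, by Stokes' theorem applied to a difference of representatives $\partial\eta+\bar\partial\zeta$ with $\bar\partial\eta=0$, played against the conditions $\bar\partial\beta=0$ and $\partial\beta\in\mbox{Im}\,\bar\partial$; and non-degeneracy is to come from the Hodge isomorphism ${\cal H}^{p,\,q}_{\widetilde\Delta}(X)\simeq E_2^{p,\,q}(X)$ of [Pop16] together with the map $\alpha\mapsto\star\bar\alpha$, a non-zero harmonic representative pairing with $\star\bar\alpha$ to $\int_X\alpha\wedge\star\bar\alpha=||\alpha||^2_{L^2_\omega}\neq 0$.

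However, the step you defer --- that $\alpha\mapsto\star\bar\alpha$ sends $\widetilde\Delta$-harmonic $(p,\,q)$-forms to $\widetilde\Delta$-harmonic $(n-p,\,n-q)$-forms --- is not a routine verification to be traced through later: it is the entire mathematical content of non-degeneracy (the paper isolates it as Claim \ref{Claim:Hodge-star_harmonicity}), and your proposal contains no proof of it. Your fallback route does not circumvent it either. To pair $\{\alpha\}_{E_2}$ against anything manufactured from $\star\bar\alpha$, you must first know that $\star\bar\alpha$ is $E_2$-closed (so that it defines a class, and so that its decomposition into a harmonic part plus an $E_2$-exact part exists); without that, $\int_X\alpha\wedge p_h(\star\bar\alpha)$ has no reason to equal $||\alpha||^2$, because the non-harmonic part of $\star\bar\alpha$ is not known to be $E_2$-exact and hence not known to pair to zero with $\alpha$. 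The same circularity affects your appeal to $\dim E_2^{p,\,q}=\dim E_2^{n-p,\,n-q}$, which you justify by the very bijection in question. The good news is that the claim requires no commutation of $\widetilde\Delta$ with the conjugate Hodge star (the obstacle you anticipate): since $\widetilde\Delta=\partial p''\partial^\star+\partial^\star p''\partial+\bar\partial\bar\partial^\star+\bar\partial^\star\bar\partial$ is a sum of non-negative operators of the form $A^\star A$, its kernel is $\ker(p''\partial)\cap\ker(p''\partial^\star)\cap\ker\bar\partial\cap\ker\bar\partial^\star$, which by the decomposition $C^\infty_{p,\,q}(X)=\ker\Delta''\oplus\mbox{Im}\,\bar\partial\oplus\mbox{Im}\,\bar\partial^\star$ amounts to the three conditions: (i) $\partial\alpha\in\mbox{Im}\,\bar\partial\oplus\mbox{Im}\,\bar\partial^\star$, (ii) $\partial^\star\alpha\in\mbox{Im}\,\bar\partial\oplus\mbox{Im}\,\bar\partial^\star$, (iii) $\alpha\in\ker\bar\partial\cap\ker\bar\partial^\star$. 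Conjugation together with the identities $\partial^\star=-\star\bar\partial\star$, $\bar\partial^\star=-\star\partial\star$ and $\star\star=(-1)^{p+q}$ shows that $\alpha$ satisfies (iii) if and only if $\star\bar\alpha$ satisfies (iii), while conditions (i) and (ii) are exchanged under $\alpha\mapsto\star\bar\alpha$. This proves the claim and closes the gap in your argument.
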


\noindent {\it Proof.} $\bullet$ To prove {\it well-definedness}, let $\{\alpha\}_{E_2}\in E_2^{p,\,q}(X)$ and $\{\beta\}_{E_2}\in E_2^{n-p,\,n-q}(X)$ be arbitrary classes in which we choose arbitrary representatives $\alpha, \beta$. Thus, $\bar\partial\alpha=0$, $\partial\alpha\in\mbox{Im}\,\bar\partial$ (since $[\alpha]_{\bar\partial}\in\ker d_1$) and $\beta$ has the analogous properties. In particular, $\partial\beta = \bar\partial v$ for some $(n-p+1,\,n-q-1)$-form $v$. Any other representative of the class $\{\alpha\}_{E_2}$ is of the shape $\alpha + \partial\eta + \bar\partial\zeta$ for some $(p-1,\,q)$-form $\eta\in\ker\bar\partial$ and some $(p,\,q-1)$-form $\zeta$. (Indeed, $[\partial\eta]_{\bar\partial} = d_1([\eta]_{\bar\partial})$.) We have

\begin{eqnarray*}\label{eqn:well-definedness_check} \int\limits_X(\alpha + \partial\eta + \bar\partial\zeta)\wedge\beta & = & \int\limits_X\alpha\wedge\beta + (-1)^{p+q}\,\int\limits_X\eta\wedge\partial\beta + (-1)^{p+q}\,\int\limits_X\zeta\wedge\bar\partial\beta \hspace{6ex} (\mbox{by Stokes})  \\
 & = & \int\limits_X\alpha\wedge\beta + (-1)^{p+q}\,\int\limits_X\eta\wedge\bar\partial v   \hspace{6ex} (\mbox{since}\hspace{1ex} \partial\beta = \bar\partial v \hspace{2ex}\mbox{and}\hspace{1ex} \bar\partial\beta=0) \\
 & = & \int\limits_X\alpha\wedge\beta + \int\limits_X\bar\partial\eta\wedge v =\int\limits_X\alpha\wedge\beta    \hspace{10ex} (\mbox{by Stokes and} \hspace{1ex} \bar\partial\eta=0).\end{eqnarray*}

\noindent Similarly, the integral $\int_X\alpha\wedge\beta$ does not change if $\beta$ is replaced by $\beta + \partial a + \bar\partial b$ with $a\in\ker\bar\partial$.  

\vspace{1ex}

 $\bullet$ To prove {\it non-degeneracy} for the pairing (\ref{eqn:duality_E2}), we fix an arbitrary Hermitian metric $\omega$ on $X$ and use the pseudo-differential Laplacian associated with $\omega$ introduced in [Pop16, $\S.1$]:

$$\widetilde{\Delta} : = \partial p''\partial^{\star} + \partial^{\star}p''\partial + \bar\partial\bar\partial^{\star} + \bar\partial^{\star} \bar\partial:C^{\infty}_{p,\,q}(X)\longrightarrow C^{\infty}_{p,\,q}(X), \hspace{2ex} p,q=0,\dots , n,$$

\noindent where $p'': C^\infty_{p,\,q}(X)\longrightarrow{\cal H}^{p,\,q}_{\Delta''}(X):=\ker\Delta''$ is the orthogonal projection w.r.t. the $L^2$ inner product defined by $\omega$ onto the $\Delta''$-harmonic space in the standard $3$-space decomposition 

$$C^\infty_{p,\,q}(X) = {\cal H}^{p,\,q}_{\Delta''}(X) \oplus \mbox{Im}\,\bar\partial \oplus \mbox{Im}\,\bar\partial^\star.$$

\noindent Recall that $\Delta''= \bar\partial\bar\partial^{\star} +\bar\partial^{\star}\bar\partial:C^{\infty}_{p,\,q}(X)\longrightarrow C^{\infty}_{p,\,q}(X)$ is the usual $\bar\partial$-Laplacian induced by $\omega$ and the above decomposition is $L^2_\omega$-orthogonal. It was proved in [Pop16, Theorem 1.1.] that for every $p,q\in\{0,\dots , n\}$, the linear map

$${\cal H}^{p,\,q}_{\widetilde\Delta}(X):=\ker(\widetilde\Delta:C^\infty_{p,\,q}(X)\longrightarrow C^\infty_{p,\,q}(X))\longrightarrow E_2^{p,\,q}(X), \hspace{3ex} \alpha\mapsto\{\alpha\}_{E_2},$$

\noindent is an {\bf isomorphism}. This is a Hodge isomorphism showing that every double class $\{\alpha\}_{E_2}\in E_2^{p,\,q}(X)$ contains a unique $\widetilde\Delta$-harmonic representative.

\begin{Claim}\label{Claim:Hodge-star_harmonicity} For every $\alpha\in C^{\infty}_{p,\,q}(X)$, the equivalence holds: $\widetilde\Delta\alpha = 0 \iff \widetilde\Delta(\star\bar\alpha)=0,$ where $\star=\star_\omega$ is the Hodge-star operator associated with $\omega$. 

\end{Claim}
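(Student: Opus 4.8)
The plan is to reduce the claimed equivalence to the single assertion that the conjugate-linear operator $T\alpha:=\star\bar\alpha$ preserves $\ker\widetilde\Delta$. Since $\star\star=(-1)^{p+q}\Id$ on $C^\infty_{p,\,q}(X)$ and conjugation commutes with $\star$, one has $T^2=(-1)^{p+q}\Id$, so $T$ is invertible and it suffices to establish one implication. The whole argument then rests on three ingredients: an intrinsic description of $\ker\widetilde\Delta$, the way $T$ intertwines the four first-order operators, and the compatibility of $T$ with the projection $p''$.

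First I would describe $\ker\widetilde\Delta$ explicitly. Because $p''$ is an $L^2_\omega$-orthogonal projection, it is self-adjoint and idempotent, so $\widetilde\Delta$ is formally self-adjoint; pairing with $\alpha$ and transferring each operator onto its adjoint gives
\[
\langle\widetilde\Delta\alpha,\alpha\rangle=\|\bar\partial\alpha\|^2+\|\bar\partial^\star\alpha\|^2+\|p''\partial\alpha\|^2+\|p''\partial^\star\alpha\|^2 .
\]
Hence $\widetilde\Delta\alpha=0$ is equivalent to the four simultaneous conditions $\bar\partial\alpha=0$, $\bar\partial^\star\alpha=0$, $p''\partial\alpha=0$, $p''\partial^\star\alpha=0$ (the first two saying precisely that $\alpha$ is $\Delta''$-harmonic). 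The problem becomes matching these four conditions for $\alpha$ with the corresponding four for $T\alpha$.

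Second, I would record how $T$ intertwines the first-order operators. Starting from $\partial^\star=-\star\bar\partial\star$ and $\bar\partial^\star=-\star\partial\star$, and using $\star\star=(-1)^{p+q}$, $\overline{\star\gamma}=\star\bar\gamma$ and $\overline{\partial\gamma}=\bar\partial\bar\gamma$, a short computation yields
\[
\bar\partial(T\alpha)=(-1)^{p+q}T(\bar\partial^\star\alpha),\qquad \bar\partial^\star(T\alpha)=-(-1)^{p+q}T(\bar\partial\alpha),
\]
\[
\partial(T\alpha)=(-1)^{p+q}T(\partial^\star\alpha),\qquad \partial^\star(T\alpha)=-(-1)^{p+q}T(\partial\alpha),
\]
so that $T$ exchanges $\bar\partial\leftrightarrow\bar\partial^\star$ and $\partial\leftrightarrow\partial^\star$ up to a nonzero sign.

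The step I expect to be the crux is showing that $T$ commutes with $p''$, since $p''$ is a non-local operator rather than a differential one. Here the key observation is that $T$ is a conjugate-linear $L^2_\omega$-isometry, namely $\langle T\alpha,T\beta\rangle=\overline{\langle\alpha,\beta\rangle}$, because $\star$ is an isometry and conjugation is a conjugate-isometry. By the classical Serre-duality Hodge theory, $T$ maps $\ker\Delta''$ bijectively onto $\ker\Delta''$; being a bijective conjugate-linear isometry that preserves this harmonic subspace, it necessarily preserves its orthogonal complement as well, and therefore commutes with the orthogonal projection $p''$ onto $\ker\Delta''$, i.e. $p''T=Tp''$. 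Finally I would assemble the pieces: substituting the four intertwining relations into the defining conditions for $\widetilde\Delta(T\alpha)=0$, then using $p''T=Tp''$ and the injectivity of $T$, these conditions collapse to $\bar\partial^\star\alpha=0$, $\bar\partial\alpha=0$, $p''\partial^\star\alpha=0$, $p''\partial\alpha=0$ (the two $p''$-conditions merely swapping roles), which are exactly the conditions characterising $\widetilde\Delta\alpha=0$. This gives the asserted equivalence $\widetilde\Delta\alpha=0\iff\widetilde\Delta(\star\bar\alpha)=0$.
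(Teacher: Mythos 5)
Your proposal is correct and follows essentially the same route as the paper's proof: both reduce $\widetilde\Delta\alpha=0$ to the four conditions $\bar\partial\alpha=0$, $\bar\partial^\star\alpha=0$, $p''\partial\alpha=0$, $p''\partial^\star\alpha=0$, and then check that $\alpha\mapsto\star\bar\alpha$ permutes these conditions (swapping the roles of $\partial$ and $\partial^\star$) via the identities $\partial^\star=-\star\bar\partial\star$, $\bar\partial^\star=-\star\partial\star$ and $\star\star=\pm\,\Id$. The only divergence is in handling the non-local conditions: the paper rewrites $p''\beta=0$ as $\beta\in\mbox{Im}\,\bar\partial\oplus\mbox{Im}\,\bar\partial^\star$ and transforms the explicit equations $\partial^\star\alpha=\bar\partial\xi+\bar\partial^\star\eta$ term by term under conjugation and $\star$, whereas you prove the commutation $p''T=Tp''$ abstractly from $T$ being a conjugate-linear $L^2_\omega$-isometry mapping $\ker\Delta''$ onto $\ker\Delta''$ --- two proofs of the same intermediate fact, since $\mbox{Im}\,\bar\partial\oplus\mbox{Im}\,\bar\partial^\star$ is precisely the orthogonal complement of $\ker\Delta''$ among smooth forms.
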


Suppose for a moment that this claim has been proved. To prove non-degeneracy for the pairing (\ref{eqn:duality_E2}), let $\{\alpha\}_{E_2}\in E_2^{p,\,q}(X)$ be an arbitrary non-zero class whose unique $\widetilde\Delta$-harmonic representative is denoted by $\alpha$. So, $\alpha\neq 0$ and $\star\bar\alpha\in{\cal H}^{n-p,\,n-q}_{\widetilde\Delta}(X)\setminus\{0\}$. In particular, $\star\bar\alpha$ represents an element in $E_2^{n-p,\,n-q}(X)$ and the pair $(\{\alpha\}_{E_2},\,\{\star\bar\alpha\}_{E_2})$ maps under (\ref{eqn:duality_E2}) to $\int_X\alpha\wedge\star\bar\alpha = \int_X|\alpha|^2_\omega\,dV_\omega = ||\alpha||^2_{L^2_\omega}\neq 0$. Since $p,q$ and $\alpha$ were arbitrary, we conclude that the pairing (\ref{eqn:duality_E2}) is non-degenerate.

\vspace{1ex}

$\bullet$ {\it Proof of Claim \ref{Claim:Hodge-star_harmonicity}.} Since $\widetilde\Delta$ is a sum of non-negative operators of the shape $A^\star A$, we have $$\ker\widetilde\Delta = \ker(p''\partial)\cap\ker(p''\partial^\star)\cap\ker\bar\partial\cap\ker\bar\partial^\star.$$

\noindent Thus, the orthogonal $3$-space decomposition recalled above yields the following equivalence: $$\alpha\in\ker\widetilde\Delta \iff (i)\hspace{1ex}\partial\alpha\in\mbox{Im}\,\bar\partial \oplus \mbox{Im}\,\bar\partial^\star, \hspace{1ex} (ii)\hspace{1ex} \partial^\star\alpha\in\mbox{Im}\,\bar\partial \oplus \mbox{Im}\,\bar\partial^\star  \hspace{2ex} \mbox{and} \hspace{2ex} (iii)\hspace{1ex} \alpha\in\ker\bar\partial\cap\ker\bar\partial^\star.$$

 Let $\alpha\in\ker\widetilde\Delta$. Since $\star:\Lambda^{p,\,q}T^\star X\longrightarrow \Lambda^{n-q,\,n-p}T^\star X$ is an isomorphism, the well-known identities $\star\star = (-1)^{p+q}$ on $(p,\,q)$-forms, $\partial^\star = -\star\bar\partial\star$ and $\bar\partial^\star = -\star\partial\star$ yield:

\vspace{1ex}

$\bar\partial\alpha = 0 \iff \partial\bar\alpha = 0 \iff \bar\partial^\star(\star\bar\alpha) = 0  \hspace{2ex} \mbox{and} \hspace{2ex} \bar\partial^\star\alpha = 0 \iff \partial^\star\bar\alpha = 0 \iff \bar\partial(\star\bar\alpha) = 0.$

\vspace{1ex}

\noindent Thus, $\alpha$ satisfies condition $(iii)$ if and only if $\star\bar\alpha$ satisfies condition $(iii)$.

 Meanwhile, $\alpha$ satisfies condition $(ii)$ if and only if there exist forms $\xi,\eta$ such that $\partial^\star\alpha = \bar\partial\xi + \bar\partial^\star\eta$. The last identity is equivalent to

\vspace{1ex}

$\bar\partial^\star\bar\alpha = \partial\bar\xi + \partial^\star\bar\eta \iff -(\star\star)\partial(\star\bar\alpha) = \pm\,\star\partial\star(\star\bar\xi) \pm\, \star(-\star\bar\partial\star\bar\eta) \iff \partial(\star\bar\alpha) = \pm\,\bar\partial^\star(\star\bar\xi) \pm \bar\partial(\star\bar\eta).$

\vspace{1ex} 

\noindent Thus, $\alpha$ satisfies condition $(ii)$ if and only if $\star\bar\alpha$ satisfies condition $(i)$. 

 Similarly, $\alpha$ satisfies condition $(i)$ if and only if there exist forms $u, v$ such that $\partial\alpha = \bar\partial u + \bar\partial^\star v$. The last identity is equivalent to

\vspace{1ex}

$\bar\partial\bar\alpha = \partial\bar{u} + \partial^\star\bar{v} \iff -\star\bar\partial\star(\star\bar\alpha) = -\star\partial(\star\star\bar{u}) - \star\partial^\star(\star\star\bar{v}) \iff \partial^\star(\star\bar\alpha) = \bar\partial^\star(\star\bar{u}) + \bar\partial(\star\bar{v}).$

\noindent Thus, $\alpha$ satisfies condition $(i)$ if and only if $\star\bar\alpha$ satisfies condition $(ii)$.

 This completes the proof of Claim \ref{Claim:Hodge-star_harmonicity} and implicitly that of Theorem \ref{The:duality_E2}.  \hfill $\Box$.

\subsection{Serre-type duality for the pages $r\geq 3$ of the Fr\"olicher spectral sequence}\label{subsection:duality_Er}

In this section, we construct elliptic pseudo-differential operators $\widetilde\Delta_{(r)}^{(\omega)}$ associated with any given Hermitian metric $\omega$ on $X$ whose kernels are isomorphic to the spaces $E_r^{p,\,q}(X)$ in every bidegree $(p,\,q)$. This extends to arbitrary positive integers $r\in\N_{>0}$ the construction performed in [Pop16] for $r=2$. We then apply this construction to prove the existence of a (non-degenerate) duality between every space $E_r^{p,\,q}(X)$ and the space $E_r^{n-p,\,n-q}(X)$ that extends to every page in the Fr\"olicher spectral sequence the classical Serre duality (corresponding to $r=1$).

Let $X$ be an arbitrary compact complex $n$-dimensional manifold. Fix $r\in\N$ and a bidegree $(p,\,q)$ with $p,q\in\{0,\dots, n\}$. A smooth $\C$-valued $(p,\,q)$-form $\alpha$ on $X$ will be said to be {\it $E_r$-closed} if it represents an $E_r$-cohomology class, denoted by $\{\alpha\}_{E_r}\in E_r^{p,\,q}(X)$, on the $r^{th}$ page of the Fr\"olicher spectral sequence of $X$. Meanwhile, $\alpha$ will be said to be {\it $E_r$-exact} if it represents the {\it zero} $E_r$-cohomology class, i.e. if $\{\alpha\}_{E_r}=0\in E_r^{p,\,q}(X)$. The $\C$-vector space of $C^\infty$ $E_r$-closed (resp. $E_r$-exact) $(p,\,q)$-forms will be denoted by ${\cal Z}_r^{p,\,q}(X)$ (resp. ${\mathscr C}_r^{p,\,q}(X)$). Of course, ${\mathscr C}_r^{p,\,q}(X)\subset{\cal Z}_r^{p,\,q}(X)$ and $E_r^{p,\,q}(X) = {\cal Z}_r^{p,\,q}(X)/{\mathscr C}_r^{p,\,q}(X)$.

The following statement was proved in [CFGU97]. It renders explicit the $E_r$-closedness and $E_r$-exactness conditions. In particular, it gives a more concrete description, equivalent to the more formal standard one, of the spaces $E_r^{p,\,q}(X)$ and the differentials $d_r$ featuring in the Fr\"olicher spectral sequence. It was also used in [Pop19].

\begin{Prop}\label{Prop:E_r-closed-exact_conditions} (i)\, Fix $r\geq 2$. A form $\alpha\in C^\infty_{p,\,q}(X)$ is {\bf $E_r$-closed} if and only if there exist forms $u_l\in C^\infty_{p+l,\,q-l}(X)$ with $l\in\{1,\dots , r-1\}$ satisfying the following tower of $r$ equations: \begin{eqnarray*}\label{eqn:tower_E_r-closedness} \bar\partial\alpha & = & 0 \\
     \partial\alpha & = & \bar\partial u_1 \\
     \partial u_1 & = & \bar\partial u_2 \\
     \vdots & & \\
     \partial u_{r-2} & = & \bar\partial u_{r-1}.\end{eqnarray*}

We say in this case that $\bar\partial\alpha=0$ and $\partial\alpha$ {\bf runs at least $(r-1)$ times}.

  (ii)\, Fix $r\geq 2$. The map $d_r: E_r^{p,\,q}(X)\longrightarrow E_r^{p+r,\,q-r+1}(X)$ acts as $d_r(\{\alpha\}_{E_r}) = \{\partial u_{r-1}\}_{E_r}$ for every $E_r$-class $\{\alpha\}_{E_r}$, any representative $\alpha$ thereof and any choice of forms $u_l$ satisfying the above tower of $E_r$-closedness equations for $\alpha$.

\vspace{1ex}

  (iii)\, Fix $r\geq 2$. A form $\alpha\in C^\infty_{p,\,q}(X)$ is {\bf $E_r$-exact} if and only if there exist forms $\zeta\in C^\infty_{p-1,\,q}(X)$ and $\xi\in C^\infty_{p,\,q-1}(X)$ such that $$\alpha=\partial\zeta + \bar\partial\xi,$$

\noindent with $\xi$ arbitrary and $\zeta$ satisfying the following tower of $(r-1)$ equations: \begin{eqnarray}\label{eqn:tower_E_r-exactness_l}\nonumber \bar\partial\zeta & = & \partial v_{r-3}  \\
    \nonumber \bar\partial v_{r-3} & = & \partial v_{r-4} \\
    \nonumber \vdots & & \\
    \nonumber \bar\partial v_1 & = & \partial v_0 \\
    \nonumber           \bar\partial v_0 & = & 0,\end{eqnarray}

\noindent for some forms $v_0,\dots , v_{r-3}$. (When $r=2$, $\zeta_{r-2} = \zeta_0$ must be $\bar\partial$-closed.)

We say in this case that $\bar\partial\zeta$ {\bf reaches $0$ in at most $(r-1)$ steps}.

\vspace{1ex}

(iv)\, The following inclusions hold in every bidegree $(p,\,q)$:

$$\dots\subset{\mathscr C}_r^{p,\,q}(X)\subset{\mathscr C}_{r+1}^{p,\,q}(X)\subset\dots\subset{\cal Z}_{r+1}^{p,\,q}(X)\subset{\cal Z}_r^{p,\,q}(X)\subset\dots,$$

\noindent with $\{0\}={\mathscr C}_0^{p,\,q}(X)\subset{\mathscr C}_1^{p,\,q}(X)=(\mbox{Im}\,\bar\partial)^{p,\,q}$ and ${\cal Z}_1^{p,\,q}(X)=(\ker\bar\partial)^{p,\,q}\subset{\cal Z}_0^{p,\,q}(X)=C^\infty_{p,\,q}(X)$.

\end{Prop}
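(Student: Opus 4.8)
The plan is to establish parts (i), (ii) and (iii) \emph{simultaneously by induction on} $r$, working from the recursive description of the Fr\"olicher spectral sequence as $E_{r+1}^{\bullet,\,\bullet}(X)=H(E_r^{\bullet,\,\bullet}(X),d_r)$ used in the Introduction; part (iv) will then follow formally. Conceptually, all of (i)--(iii) are the zig-zag description of the spectral sequence of the double complex $(C^\infty_{\bullet,\,\bullet}(X),\partial,\bar\partial)$ filtered by the holomorphic degree $p$: a class on the $r$-th page is represented by a form $\alpha$ carrying a ``staircase'' $\alpha,u_1,\dots,u_{r-1}$ that pushes $d\alpha$ into filtration level $p+r$, and one could alternatively read (i)--(iii) off the Cartan--Eilenberg formula $E_r^{p,\,q}=Z_r^{p,\,q}/(Z_{r-1}^{p+1,\,q-1}+dZ_{r-1}^{p-r+1,\,q+r-2})$ with $Z_r^{p,\,q}=\{x\in F^pC^\infty_{p+q}(X): dx\in F^{p+r}C^\infty_{p+q+1}(X)\}$. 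I will present the inductive version, as it keeps the bidegree bookkeeping transparent and matches the recursive setup.

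For the base case $r=2$: a class $[\alpha]_{\bar\partial}\in E_1^{p,\,q}$ lies in $\ker d_1$ iff $d_1[\alpha]_{\bar\partial}=[\partial\alpha]_{\bar\partial}=0$, i.e.\ $\partial\alpha=\bar\partial u_1$, which together with $\bar\partial\alpha=0$ is the $r=2$ tower, proving (i). The differential $d_2$ on $E_2=H(E_1,d_1)$ sends $\{\alpha\}_{E_2}$ to $\{\partial u_1\}_{E_2}$, giving (ii); here $\partial u_1$ is automatically $E_2$-closed since $\bar\partial\partial u_1=-\partial\bar\partial u_1=-\partial\partial\alpha=0$ and $\partial(\partial u_1)=0$. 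Finally $\{\alpha\}_{E_2}=0$ means $[\alpha]_{\bar\partial}\in\operatorname{Im}d_1$, i.e.\ $\alpha=\partial\zeta+\bar\partial\xi$ with $\bar\partial\zeta=0$, which is (iii) for $r=2$.

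For the inductive step $r\to r+1$, assume (i)--(iii) for $r$. A form $\alpha$ is $E_{r+1}$-closed iff it is $E_r$-closed and $d_r\{\alpha\}_{E_r}=0$; by (i) and (ii) for $r$ this says $\alpha$ admits lifts $u_1,\dots,u_{r-1}$ and that $\partial u_{r-1}$ is $E_r$-exact. Expanding the latter via (iii) produces $\zeta$ with $\partial u_{r-1}=\partial\zeta+\bar\partial\xi$ and auxiliary forms $v_0,\dots,v_{r-3}$. The heart of the argument is then a \emph{cascading correction}: replacing $u_{r-1}$ by $u_{r-1}-\zeta$ reduces $\partial u_{r-1}$ to $\bar\partial\xi$, so $u_r:=\xi$ supplies the new top equation, but it disturbs the preceding equation by a term $\partial v_{r-3}$; this is absorbed by replacing $u_{r-2}$ by $u_{r-2}-v_{r-3}$, corrected in turn by $u_{r-3}-v_{r-4}$, and so on down the staircase, the bidegrees matching at every stage because $u_l$ and $v_{l-1}$ both lie in $C^\infty_{p+l,\,q-l}(X)$. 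The bottom relation $\bar\partial v_0=0$ guarantees that the equation $\partial\alpha=\bar\partial u_1$ survives untouched, so $\alpha$ satisfies the full $(r+1)$-tower; the converse is immediate since then $\partial u_{r-1}=\bar\partial u_r$ is $\bar\partial$-exact, hence $E_r$-exact, forcing $d_r\{\alpha\}_{E_r}=0$. Formula (ii) for $r+1$ is the identity $d_{r+1}\{\alpha\}_{E_{r+1}}=\{\partial u_r\}_{E_{r+1}}$, whose independence of all choices I would verify by the same Stokes-free algebra as for $d_2$. Part (iii) for $r+1$ is the dual statement: $E_{r+1}$-exactness of $\alpha$ means $\{\alpha\}_{E_r}\in\operatorname{Im}d_r$, so $\alpha-\partial w_{r-1}$ is $E_r$-exact for an $E_r$-closed $\gamma$ with lifts $w_l$, and splicing the closedness tower of $\gamma$ onto the $E_r$-exactness data of $\alpha-\partial w_{r-1}$ yields the lengthened exactness tower.

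Finally, for (iv): the inclusion ${\cal Z}_{r+1}^{p,\,q}(X)\subset{\cal Z}_r^{p,\,q}(X)$ holds by truncating the closedness tower, while ${\mathscr C}_r^{p,\,q}(X)\subset{\mathscr C}_{r+1}^{p,\,q}(X)$ follows from (iii) by re-indexing the auxiliary forms ($v_j^+:=v_{j-1}$, $v_0^+:=0$) and using $\bar\partial v_0=0$ to close the lengthened tower with the same $\zeta,\xi$; the boundary identifications ${\mathscr C}_0^{p,\,q}(X)=\{0\}$, ${\mathscr C}_1^{p,\,q}(X)=(\operatorname{Im}\bar\partial)^{p,\,q}$, ${\cal Z}_1^{p,\,q}(X)=(\ker\bar\partial)^{p,\,q}$ and ${\cal Z}_0^{p,\,q}(X)=C^\infty_{p,\,q}(X)$ are read off directly from $E_0=C^\infty$, $d_0=\bar\partial$ and $E_1=H_{\bar\partial}$. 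I expect the main obstacle to be bookkeeping discipline in the inductive step: keeping the bidegrees of the numerous auxiliary forms aligned and confirming that the cascading corrections for (i) (and their transposes for (iii)) never damage an already-established equation, together with the routine but lengthy independence-of-choices check that underlies the well-definedness of $d_r$ in (ii).
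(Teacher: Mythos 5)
The paper itself offers no argument for this proposition: its ``proof'' is the citation to [CFGU97], so your attempt has to be measured against what a complete proof requires rather than against a written proof in the text. The purely combinatorial content of your induction is correct, and I have checked it: in (i), replacing $u_{r-1}$ by $u_{r-1}-\zeta$, $u_{r-2}$ by $u_{r-2}-v_{r-3}$, \dots, $u_1$ by $u_1-v_0$ does produce the $(r+1)$-tower, with $\bar\partial v_0=0$ protecting the bottom equation $\partial\alpha=\bar\partial u_1$; in (iii), splicing via $\zeta':=w_{r-1}+\zeta$, $v_j':=w_j+v_{j-1}$, $v_0':=\gamma$ does produce the lengthened exactness tower; and (iv) follows by truncation and re-indexing as you say.

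The genuine gap is part (ii), on which your entire induction pivots. You use the identity $d_r\{\alpha\}_{E_r}=\{\partial u_{r-1}\}_{E_r}$ at every step, but you never prove it: for $r=2$ it is asserted, and for general $r$ it is deferred to ``the same Stokes-free algebra as for $d_2$''. The independence-of-choices half is indeed routine once (i) and (iii) are available: two lift systems for the same $\alpha$ differ by a system $(w_l)$ with $\bar\partial w_1=0$ and $\partial w_l=\bar\partial w_{l+1}$, and then $\partial w_{r-1}$ is $E_r$-exact by taking $\zeta=w_{r-1}$ and $v_j=w_{j+1}$ in (iii). What is not routine, and cannot be done inside your page-level framework at all, is identifying the resulting well-defined map with the differential $d_r$ of the Fr\"olicher spectral sequence: the recursion $E_{r+1}=H(E_r,d_r)$ determines the pages but \emph{not} the differentials, which are extra data coming from the filtration, so ``(ii) for $r+1$'' is simply not reachable from ``(i)--(iii) for $r$''. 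The same problem infects the equivalences with which you open each inductive step (``$\alpha$ is $E_{r+1}$-closed iff it is $E_r$-closed and $d_r\{\alpha\}_{E_r}=0$'', and its exactness analogue): since the paper defines $E_r$-closedness and $E_r$-exactness as ``represents a class (the zero class) on page $r$'', these equivalences are statements about the filtered complex, not formal consequences of the recursion. The only way to close the gap is the route you mention parenthetically and then set aside: work with the Cartan--Eilenberg spaces $Z_r^{p,\,q}=\{x\in F^p C^\infty_{p+q}(X)\,:\,dx\in F^{p+r}C^\infty_{p+q+1}(X)\}$, check that a pure-type $\alpha$ is the $(p,\,q)$-component of some $x\in Z_r^{p,\,q}$ exactly when it admits a tower of lifts (the components of $x$ being $(-1)^l u_l$), use that $d_r$ is induced by $d$ itself so that $d_r[x]=[dx]=\pm[\partial u_{r-1}]$ (the sign is a convention that affects no kernel or image), and unwind the denominator $Z_{r-1}^{p+1,\,q-1}+dZ_{r-1}^{p-r+1,\,q+r-2}$ to get (iii). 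That is precisely the proof in [CFGU97]; your inductive repackaging is a useful consistency check of the formulas, but it cannot stand on its own.
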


\noindent {\it Proof.} See [CFGU97].   \hfill $\Box$

\vspace{2ex}

The immediate consequence that we notice is the well-definedness of the pairing that parallels on any page of the Fr\"olicher spectral sequence the classical Serre duality.

\begin{Cor}\label{Cor:well-definedness_Er_duality} Let $X$ be a compact complex manifold with $\mbox{dim}_\C X=n$. For every positive integer $r\in\N_{>0}$ and every $p,q\in\{0,\dots , n\}$, the canonical bilinear pairing  \begin{equation*} E_r^{p,\,q}(X)\times E_r^{n-p,\,n-q}(X)\longrightarrow\C, \hspace{3ex} (\{\alpha\}_{E_r},\,\{\beta\}_{E_r})\mapsto \int\limits_X\alpha\wedge\beta,\end{equation*} \noindent is {\bf well defined} (i.e. independent of the choices of representatives of the $E_r$-classes involved).

\end{Cor}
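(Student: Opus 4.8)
The plan is to read the statement straight off the explicit descriptions of $E_r$-closedness and $E_r$-exactness furnished by Proposition \ref{Prop:E_r-closed-exact_conditions}. Well-definedness of the pairing means exactly that the number $\int_X\alpha\wedge\beta$ is unchanged when either representative is modified by an $E_r$-exact form. Since the wedge pairing is symmetric up to sign, $\int_X\alpha\wedge\beta=\pm\int_X\beta\wedge\alpha$, the two factors play interchangeable roles, so it suffices to prove the single assertion that $\int_X\alpha'\wedge\beta=0$ whenever $\alpha'\in{\mathscr C}_r^{p,\,q}(X)$ is $E_r$-exact and $\beta\in{\cal Z}_r^{n-p,\,n-q}(X)$ is $E_r$-closed. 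For $r=1$ this is the classical fact (here $\alpha'\in\mbox{Im}\,\bar\partial$ and $\bar\partial\beta=0$, so Stokes gives $0$), and part (iv) of Proposition \ref{Prop:E_r-closed-exact_conditions} identifies the relevant spaces, so I would concentrate on $r\geq 2$ and feed in the two towers.

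First I would write $\alpha'=\partial\zeta+\bar\partial\xi$ as in (iii), where $\zeta$ fits into the tower $\bar\partial\zeta=\partial v_{r-3}$, $\bar\partial v_{r-3}=\partial v_{r-4},\dots,\bar\partial v_1=\partial v_0$, $\bar\partial v_0=0$, and I would record the $E_r$-closedness tower of $\beta$ coming from (i): $\bar\partial\beta=0$, $\partial\beta=\bar\partial w_1$, $\partial w_1=\bar\partial w_2,\dots,\partial w_{r-2}=\bar\partial w_{r-1}$. The contribution of $\bar\partial\xi$ is immediately harmless: by Stokes $\int_X\bar\partial\xi\wedge\beta=\pm\int_X\xi\wedge\bar\partial\beta=0$ because $\bar\partial\beta=0$. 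Everything therefore reduces to showing $\int_X\partial\zeta\wedge\beta=0$.

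The heart of the argument is a telescoping ``zig-zag''. Applying Stokes to $\partial(\zeta\wedge\beta)$ and then to $\bar\partial(\zeta\wedge w_1)$ shifts the derivative from one factor to the other; using $\partial\beta=\bar\partial w_1$ followed by $\bar\partial\zeta=\partial v_{r-3}$ converts $\int_X\partial\zeta\wedge\beta$, up to sign, into $\int_X\partial v_{r-3}\wedge w_1$. I would then iterate: at the generic stage the integral has the shape $\pm\int_X\partial v_{r-3-k}\wedge w_{1+k}$, and one full cycle---move $\partial$ onto $w_{1+k}$, substitute $\partial w_{1+k}=\bar\partial w_{2+k}$, move $\bar\partial$ back onto $v_{r-3-k}$, substitute $\bar\partial v_{r-3-k}=\partial v_{r-4-k}$---advances $k$ by one. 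Because the two towers have matching lengths, this reaches $k=r-3$, namely $\pm\int_X\partial v_0\wedge w_{r-2}$; a final cycle uses $\partial w_{r-2}=\bar\partial w_{r-1}$ and lands on $\pm\int_X\bar\partial v_0\wedge w_{r-1}$, which vanishes since $\bar\partial v_0=0$.

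The step I expect to demand the most care is exactly this bookkeeping: verifying that the indices of the two interlocking towers remain in their admissible ranges at every cycle and that the chain terminates precisely when the equation $\bar\partial v_0=0$ is reached, rather than overshooting or stalling early; the signs, all of the form $(-1)^{\text{total degree}}$ produced by Stokes, are routine and irrelevant to the vanishing. Finally, by the symmetry $\int_X\alpha\wedge\beta=\pm\int_X\beta\wedge\alpha$ noted above, the same computation shows that altering $\beta$ by an $E_r$-exact form also leaves the integral unchanged, which completes the proof of well-definedness.
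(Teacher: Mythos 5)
Your proof is correct and follows essentially the same route as the paper's: reduce by symmetry to showing that $\int_X\alpha'\wedge\beta=0$ for $\alpha'$ $E_r$-exact and $\beta$ $E_r$-closed, kill the $\bar\partial\xi$ contribution with a single integration by parts, and then telescope between the two towers of equations until the chain terminates at $\pm\int_X\bar\partial v_0\wedge w_{r-1}=0$. The index bookkeeping you flag as the delicate point is precisely the paper's own ping-pong computation (with your $w_j$ playing the role of the paper's $u_j$), so there is nothing further to add.
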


\noindent {\it Proof.} By symmetry, it suffices to prove that $\int_X\alpha\wedge\beta=0$ whenever $\alpha\in C^\infty_{p,\,q}(X)$ is $E_r$-exact and $\beta\in C^\infty_{n-p,\,n-q}(X)$ is $E_r$-closed. By Proposition \ref{Prop:E_r-closed-exact_conditions}, these conditions are equivalent to $$\bar\partial\beta = 0, \hspace{2ex} \partial\beta = \bar\partial u_1, \hspace{2ex}\partial u_1 = \bar\partial u_2, \dots\hspace{2ex} ,\partial u_{r-2} = \bar\partial u_{r-1},$$

\noindent for some forms $u_j$ and to $\alpha=\partial\zeta + \bar\partial\xi$ for some form $\zeta$ satisfying $$\bar\partial\zeta = \partial v_{r-3}, \hspace{2ex} \bar\partial v_{r-3} = \partial v_{r-4},\dots \hspace{2ex}, \bar\partial v_1 = \partial v_0,\bar\partial v_0 =0$$

\noindent for some forms $v_k$. We get $$\int\limits_X\alpha\wedge\beta = \int\limits_X\partial\zeta\wedge\beta + \int\limits_X\bar\partial\xi\wedge\beta.$$

Every integral on the r.h.s. above is seen to vanish by repeated integration by parts. Specifically, $\int_X\bar\partial\xi\wedge\beta = \pm \int\limits_X\xi\wedge\bar\partial\beta =0$ since $\bar\partial\beta = 0$, while for every $l\in\{1,\dots , r-2\}$ we have

\begin{eqnarray*}\int\limits_X\partial\zeta\wedge\beta & = & \pm\int\limits_X\zeta\wedge\partial\beta = \pm\int\limits_X\zeta\wedge\bar\partial u_1 = \pm\int\limits_X\bar\partial\zeta\wedge u_1 = \pm\int\limits_X\partial v_{r-3}\wedge u_1 \\
   & = & \pm\int\limits_X v_{r-3}\wedge \partial u_1 = \pm\int\limits_X v_{r-3}\wedge\bar\partial u_2 = \pm\int\limits_X \bar\partial v_{r-3}\wedge u_2 = \pm\int\limits_X \partial v_{r-4}\wedge u_2  \\
    & \vdots &  \\
  & = & \pm\int\limits_X v_0\wedge \partial u_{r-2} = \pm\int\limits_X v_0\wedge \bar\partial u_{r-1} = \pm\int\limits_X \bar\partial v_0\wedge u_{r-1} =0,\end{eqnarray*}

\noindent since $\bar\partial v_0=0$.  \hfill $\Box$

\vspace{3ex}

We will now prove that the above pairing is also non-degenerate, thus defining a Serre-type duality on every page of the Fr\"olicher spectral sequence. Much of the following preliminary discussion appeared in [Pop19, $\S.2.2$ and Appendix], so we will only recall the bare bones.

Let us fix an arbitrary Hermitian metric $\omega$ on $X$. For every bidegree $(p,\,q)$, {\it $\omega$-harmonic spaces} (also called {\it$E_r$-harmonic spaces}): $$\cdots\subset{\cal H}_{r+1}^{p,\,q}\subset{\cal H}_r^{p,\,q}\subset\cdots\subset{\cal H}_1^{p,\,q}\subset C^\infty_{p,\,q}(X)$$

\noindent were inductively constructed in [Pop17, $\S.3.2$, especially Definition 3.3. and Corollary 3.4.] such that every subspace ${\cal H}_r^{p,\,q}={\cal H}_r^{p,\,q}(X,\,\omega)$ is isomorphic to the corresponding space $E_r^{p,\,q}(X)$ on the $r^{th}$ page of the Fr\"olicher spectral sequence.

Moreover, these spaces fit into the inductive construction described in the next

\begin{Prop}\label{Prop:H_r_construction} Let $(X,\,\omega)$ be a compact Hermitian manifold with $\mbox{dim}_\C X=n$.

  \vspace{1ex}

  (i)\, For every bidegree $(p,\,q)$, the space $C^\infty_{p,\,q}(X)$ splits successively into mutually $L^2_\omega$-orthogonal subspaces as follows:

  \begin{eqnarray*} C^\infty_{p,\,q}(X) = \mbox{Im}\,d_0\hspace{1ex} \oplus & \underbrace{{\cal H}_1^{p,\,q}}_{{ }\rotatebox{90}{=}} & \oplus \hspace{1ex} \mbox{Im}\,d_0^\star \\
     & \overbrace{\mbox{Im}\,d_1^{(\omega)}\hspace{1ex}\oplus\hspace{1ex} \underbrace{{\cal H}_2^{p,\,q}}_{{ }\rotatebox{90}{=}} \hspace{1ex}\oplus\hspace{1ex}\mbox{Im}\,(d_1^{(\omega)})^\star} & \\
     & \vdots  &  \\
      & \rotatebox{90}{=}  & \\
     & \overbrace{\mbox{Im}\,d_{r-1}^{(\omega)}\hspace{1ex}\oplus\hspace{1ex} \underbrace{{\cal H}_r^{p,\,q}}_{{ }\rotatebox{90}{=}} \hspace{1ex}\oplus\hspace{1ex}\mbox{Im}\,(d_{r-1}^{(\omega)})^\star} &  \\
     & \overbrace{\mbox{Im}\,d_r^{(\omega)}\hspace{1ex}\oplus\hspace{1ex} \underbrace{{\cal H}_{r+1}^{p,\,q}}_{{ }\rotatebox{90}{=}} \hspace{1ex}\oplus\hspace{1ex}\mbox{Im}\,(d_r^{(\omega)})^\star} &  \\
   & \vdots  &   \end{eqnarray*}

  \noindent where, for $r\in\N_{>0}$, the operators $d_r^{(\omega)}$ are defined as \begin{equation}\label{eqn:d_r_metric-realisation-def}d_r^{(\omega)} = d_r^{(\omega){p,\,q}} =p_r\partial D_{r-1}p_r  : {\cal H}_r^{p,\,q} \longrightarrow {\cal H}_r^{p+r,\,q-r+1}\end{equation}

\noindent using the $L^2_\omega$-orthogonal projections $p_r = p_r^{p,\,q}:C^\infty_{p,\,q}(X)\longrightarrow{\cal H}_r^{p,\,q}$ onto the $\omega$-harmonic spaces ${\cal H}_r^{p,\,q}$ and where we inductively define $$D_{r-1}:=((\widetilde\Delta^{(1)})^{-1}\bar\partial^\star\partial)\dots((\widetilde\Delta^{(r-1)})^{-1}\bar\partial^\star\partial) \hspace{3ex} \mbox{and} \hspace{3ex} D_0=\mbox{Id}.$$ (So, $p_1=p''$.) See (iii) below for the inductive definition of the pseudo-differential Laplacians $\widetilde\Delta^{(r)}$.

 Thus, the triples $(p_r,\,d_r^{(\omega)},\, {\cal H}_{r+1}^{p,\,q})$ are defined by induction on $r\in\N_{>0}$: once the triple $(p_{r-1},\,d_{r-1}^{(\omega)},\, {\cal H}_r^{p,\,q})$ has been constructed for all the bidegrees $(p,\,q)$, it induces $p_r$, which induces $d_r^{(\omega)}$, which induces ${\cal H}_{r+1}^{p,\,q}$ defined as the $L^2_\omega$-orthogonal complement of $\mbox{Im}\,d_r^{(\omega)}$ in $\ker\,d_r^{(\omega)}$.

The operators $d_r^{(\omega)}$ can also be considered to be defined on the whole spaces of smooth forms: $$d_r^{(\omega)}=p_r\partial D_{r-1}p_r : C^\infty_{p,\,q}(X) \longrightarrow C^\infty_{p+r,\,q-r+1}(X).$$

  \vspace{1ex}

  (ii)\, The above definition of $d_r^{(\omega)}$ follows from the requirement that the following diagram be commutative:

\vspace{3ex}

\hspace{30ex} $\begin{CD}
E_r^{p,\,q}(X)              @>d_r>>          E_r^{p+r,\,q-r+1}(X) \\
@V\simeq VV                                       @V\simeq VV \\
{\cal H}_r^{p,\,q}          @>d_r^{(\omega)} = p_r\partial D_{r-1}p_r>>      {\cal H}_r^{p+r,\,q-r+1},\end{CD}$

\vspace{3ex}

\noindent where the maps $d_r:E_r^{p,\,q}(X)\longrightarrow E_r^{p+r,\,q-r+1}(X)$ are the differentials on the $r^{th}$ page of the Fr\"olicher spectral sequence. Thus, the maps $d_r^{(\omega)}$ are the metric realisations, at the level of the harmonic spaces, of the canonical maps $d_r$.

\vspace{2ex}

  (iii)\, For every $r\in\N_{>0}$, the adjoint of $d_r^{(\omega)}$ is

$$(d_r^{(\omega)})^\star = p_rD_{r-1}^\star\partial^\star p_r:{\cal H}_r^{p+r,\,q-r+1}\longrightarrow{\cal H}_r^{p,\,q}.$$

\noindent It induces the ``Laplacian'' $$\widetilde\Delta_{(r+1)}^{(\omega)}= d_r^{(\omega)}\,(d_r^{(\omega)})^\star + (d_r^{(\omega)})^\star\,d_r^{(\omega)}:{\cal H}_r^{p,\,q}\longrightarrow{\cal H}_r^{p,\,q}$$

\noindent given by the explicit formula $$\widetilde\Delta_{(r+1)}^{(\omega)} = p_r\,[(\partial D_{r-1}p_r)\,(\partial D_{r-1}p_r)^\star + (p_r\partial D_{r-1})^\star\,(p_r\partial D_{r-1}) + \widetilde\Delta^{(r)}]\,p_r,$$

\noindent which is the restriction and co-restriction to ${\cal H}_r^{p,\,q}$ of the pseudo-differential Laplacian \begin{eqnarray*} \widetilde\Delta^{(r+1)} & : = & (\partial D_{r-1}p_r)\,(\partial D_{r-1}p_r)^\star + (p_r\partial D_{r-1})^\star\,(p_r\partial D_{r-1}) + \widetilde\Delta^{(r)} : C^{\infty}_{p,\,q}(X)\longrightarrow C^{\infty}_{p,\,q}(X).\end{eqnarray*}

\vspace{2ex}

(iv)\, For every $r\in\N_{>0}$ and every bidegree $(p,\,q)$, the following orthogonal $3$-space decomposition holds:

$${\cal H}_r^{p,\,q} = \mbox{Im}\,d_r^{(\omega)}\oplus{\cal H}_{r+1}^{p,\,q}\oplus\mbox{Im}\,(d_r^{(\omega)})^\star,$$

\noindent where $\ker d_r^{(\omega)}=\mbox{Im}\,d_r^{(\omega)}\oplus{\cal H}_{r+1}^{p,\,q}$. In particular, this confirms that ${\cal H}_{r+1}^{p,\,q}$ is the orthogonal complement for the $L^2_\omega$-inner product of $\mbox{Im}\,d_r^{(\omega)}$ in $\ker d_r^{(\omega)}$. Moreover,

$${\cal H}_{r+1}^{p,\,q}=\ker\widetilde\Delta_{(r+1)}^{(\omega)}=\ker d_r^{(\omega)}\cap\ker (d_r^{(\omega)})^\star\simeq E_{r+1}^{p,\,q}(X),$$

\noindent for every $r\in\N$ and all $p,q\in\{0,\dots , n\}$.

\end{Prop}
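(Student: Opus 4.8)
The plan is to argue by induction on $r$, since the whole statement concerns the inductively defined tower of data $(p_r,\,d_r^{(\omega)},\,\mathcal{H}_{r+1}^{p,\,q})$. The base case $r=1$ is classical $\bar\partial$-Hodge theory: $\mathcal{H}_1^{p,\,q}$ is the $\Delta''$-harmonic space, $p_1=p''$, $D_0=\mbox{Id}$, $\widetilde\Delta^{(1)}=\Delta''$, and both the splitting $C^\infty_{p,\,q}(X)=\mbox{Im}\,d_0\oplus\mathcal{H}_1^{p,\,q}\oplus\mbox{Im}\,d_0^\star$ and the Hodge isomorphism $\mathcal{H}_1^{p,\,q}\simeq E_1^{p,\,q}(X)=H^{p,\,q}_{\bar\partial}(X)$ are standard. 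The inductive hypothesis at level $r$ provides the orthogonal projection $p_r$ onto $\mathcal{H}_r^{p,\,q}$, the operator $D_{r-1}$, an elliptic, non-negative, self-adjoint pseudo-differential Laplacian $\widetilde\Delta^{(r)}$ with $\ker\widetilde\Delta^{(r)}=\mathcal{H}_r^{p,\,q}$ (so its Green operator $(\widetilde\Delta^{(r)})^{-1}$ inverts $\widetilde\Delta^{(r)}$ on $(\mathcal{H}_r^{p,\,q})^\perp$ and kills $\mathcal{H}_r^{p,\,q}$), and the isomorphism $\mathcal{H}_r^{p,\,q}\simeq E_r^{p,\,q}(X)$, $\alpha\mapsto\{\alpha\}_{E_r}$.

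Granting this, I would first dispatch the purely formal assertions. With $d_r^{(\omega)}=p_r\partial D_{r-1}p_r$, the adjoint formula in (iii) is immediate from the self-adjointness of the orthogonal projection $p_r$ together with $(\partial D_{r-1})^\star=D_{r-1}^\star\partial^\star$; expanding $d_r^{(\omega)}(d_r^{(\omega)})^\star+(d_r^{(\omega)})^\star d_r^{(\omega)}$ and using $p_r^2=p_r$ produces the bracketed expression for $\widetilde\Delta_{(r+1)}^{(\omega)}$, while the extra summand $\widetilde\Delta^{(r)}$ contributes nothing after the sandwiching since $\widetilde\Delta^{(r)}p_r=0$ on $\mathcal{H}_r^{p,\,q}=\ker\widetilde\Delta^{(r)}$. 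That very summand is what one must retain to pass to the operator $\widetilde\Delta^{(r+1)}$ acting on all of $C^\infty_{p,\,q}(X)$, and I would verify its ellipticity by a symbol computation: on $(\mathcal{H}_r^{p,\,q})^\perp$ the principal symbol is controlled by $\widetilde\Delta^{(r)}$, elliptic by induction, so $\widetilde\Delta^{(r+1)}$ is again an elliptic pseudo-differential operator and standard theory yields the finite-dimensional harmonic space $\mathcal{H}_{r+1}^{p,\,q}=\ker\widetilde\Delta^{(r+1)}$.

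The crux is part (ii): that under $\mathcal{H}_r^{p,\,q}\simeq E_r^{p,\,q}(X)$ the operator $d_r^{(\omega)}$ realizes the Fr\"olicher differential $d_r$. By Proposition \ref{Prop:E_r-closed-exact_conditions}(ii), for $\alpha\in\mathcal{H}_r^{p,\,q}$ one has $d_r(\{\alpha\}_{E_r})=\{\partial u_{r-1}\}_{E_r}$ for any solution of the tower $\partial\alpha=\bar\partial u_1,\ \partial u_1=\bar\partial u_2,\dots,\partial u_{r-2}=\bar\partial u_{r-1}$. The key lemma is therefore that $p_r\partial D_{r-1}\alpha$ represents the class $\{\partial u_{r-1}\}_{E_r}$, i.e. the zig-zag $D_{r-1}=((\widetilde\Delta^{(1)})^{-1}\bar\partial^\star\partial)\cdots((\widetilde\Delta^{(r-1)})^{-1}\bar\partial^\star\partial)$ produces a legitimate top potential. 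I would prove this by an inner induction modelled on the case $r=2$, where $D_1\alpha=(\Delta'')^{-1}\bar\partial^\star\partial\alpha=\bar\partial^\star(\Delta'')^{-1}\partial\alpha$ solves $\bar\partial u_1=\partial\alpha$ precisely because $\partial\alpha$ is $\bar\partial$-closed ($\bar\partial\partial\alpha=-\partial\bar\partial\alpha=0$ as $\alpha\in\mathcal{H}_r\subset\ker\bar\partial$) and $\bar\partial$-exact; one then checks that composing with a further factor $(\widetilde\Delta^{(k)})^{-1}\bar\partial^\star\partial$ climbs one more rung of the tower, up to corrections lying in images of $\bar\partial$ and $\partial$ that do not alter the $E_r$-class of the top term. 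This interplay between the iterated Green operators and the nested projections $p_k$ is the main obstacle, and it is exactly the delicate bookkeeping carried out in [Pop17] and [Pop19]; I expect to lean on those references for it rather than reproduce it in full.

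Once (ii) is in place, the rest is formal. The identity $d_r^2=0$ on $E_r^{p,\,q}(X)$ transports through the isomorphism to $(d_r^{(\omega)})^2=0$, so $d_r^{(\omega)}$ is a genuine differential on the finite-dimensional harmonic spaces; elementary Hodge theory for a single differential then gives the orthogonal splitting $\mathcal{H}_r^{p,\,q}=\mbox{Im}\,d_r^{(\omega)}\oplus\mathcal{H}_{r+1}^{p,\,q}\oplus\mbox{Im}\,(d_r^{(\omega)})^\star$ with $\ker d_r^{(\omega)}=\mbox{Im}\,d_r^{(\omega)}\oplus\mathcal{H}_{r+1}^{p,\,q}$ and $\mathcal{H}_{r+1}^{p,\,q}=\ker\widetilde\Delta_{(r+1)}^{(\omega)}=\ker d_r^{(\omega)}\cap\ker(d_r^{(\omega)})^\star$, which is part (iv); refining the ambient decomposition of the base case by stacking these splittings over all $r$ yields the displayed tower in part (i). Finally, since $E_{r+1}^{p,\,q}(X)$ is by definition the $d_r$-cohomology of the $r$-th page, the commutative diagram identifies it with $\ker d_r^{(\omega)}/\mbox{Im}\,d_r^{(\omega)}\cong\mathcal{H}_{r+1}^{p,\,q}$, establishing the isomorphism $\mathcal{H}_{r+1}^{p,\,q}\simeq E_{r+1}^{p,\,q}(X)$ and promoting the inductive hypothesis to level $r+1$, which closes the induction.
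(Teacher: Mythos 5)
Your proposal is correct and takes essentially the same route as the paper: the paper's entire proof of this Proposition is a citation to [Pop19, \S 2.2 and Appendix], and you likewise defer the one genuinely delicate point --- that $p_r\partial D_{r-1}p_r$ realizes the Fr\"olicher differential $d_r$ under the Hodge isomorphisms --- to [Pop17] and [Pop19], while correctly supplying the surrounding formal skeleton (the base case $r=1$, the adjoint and Laplacian identities via self-adjointness of $p_r$ and $\widetilde\Delta^{(r)}p_r=0$, ellipticity of $\widetilde\Delta^{(r+1)}$ since the added terms are smoothing, and finite-dimensional Hodge theory for $d_r^{(\omega)}$ yielding (iv) and the stacked decomposition in (i)). Your explicit check at $r=2$ that $u_1=D_1\alpha$ solves $\bar\partial u_1=\partial\alpha$ is also sound.
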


\noindent {\it Proof.} The verification of the details of these statements was done in [Pop19, $\S.2.2$ and Appendix].   \hfill $\Box$

\vspace{3ex}

We saw in (i) of Proposition \ref{Prop:E_r-closed-exact_conditions} how the $E_r$-closedness property of a differential form is characterised in explicit terms. We will now define by analogy the property of $E_r^\star$-closedness when a Hermitian metric has been fixed.

\begin{Def}\label{Def:E_r-closed-star} Let $(X,\,\omega)$ be an $n$-dimensional compact complex Hermitian manifold. Fix $r\geq 1$ and a bidegree $(p,\,q)$. A form $\alpha\in C^\infty_{p,\,q}(X)$ is said to be {\bf $E_r^\star$-closed} with respect to the metric $\omega$ if and only if there exist forms $v_l\in C^\infty_{p-l,\,q+l}(X)$ with $l\in\{1,\dots , r-1\}$ satisfying the following tower of $r$ equations: \begin{eqnarray*}\label{eqn:tower_E_r-closedness-star} \bar\partial^\star\alpha & = & 0 \\
     \partial^\star\alpha & = & \bar\partial^\star v_1 \\
     \partial^\star v_1 & = & \bar\partial^\star v_2 \\
     \vdots & & \\
     \partial^\star v_{r-2} & = & \bar\partial^\star v_{r-1}.\end{eqnarray*}

We say in this case that $\bar\partial^\star\alpha=0$ and $\partial^\star\alpha$ {\bf runs at least $(r-1)$ times}.

\end{Def}

We can now use the $E_r$-closedness and $E_r^\star$-closedness properties to characterise the ${\cal H}_r$-harmonicity property defined above.

\begin{Prop}\label{Prop:E_r-closedness-star-harmonicity} Let $(X,\,\omega)$ be an $n$-dimensional compact complex Hermitian manifold. Fix $r\geq 1$ and a bidegree $(p,\,q)$. For any form $\alpha\in C^\infty_{p,\,q}(X)$, the following equivalence holds: $$\alpha\in{\cal H}_r^{p,\,q} \iff \alpha \hspace{1ex}\mbox{is}\hspace{1ex} E_r\mbox{-closed} \hspace{1ex}\mbox{and}\hspace{1ex} E_r^\star\mbox{-closed}.$$

\end{Prop}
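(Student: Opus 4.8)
The plan is to argue by induction on $r$, running in parallel the stated equivalence and the auxiliary Hodge-star symmetry $T({\cal H}_r^{p,q})={\cal H}_r^{n-p,n-q}$, where $T\alpha:=\star\bar\alpha$. For $r=1$ both towers reduce to their top equation, so $E_1$-closedness is $\bar\partial\alpha=0$ and $E_1^\star$-closedness is $\bar\partial^\star\alpha=0$; thus $\{E_1\text{-closed}\}\cap\{E_1^\star\text{-closed}\}=\ker\bar\partial\cap\ker\bar\partial^\star={\cal H}_1^{p,q}$, and the symmetry follows from the relations $\bar\partial\alpha=0\iff\bar\partial^\star(\star\bar\alpha)=0$ and $\bar\partial^\star\alpha=0\iff\bar\partial(\star\bar\alpha)=0$ recalled in the proof of Claim \ref{Claim:Hodge-star_harmonicity}. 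For the inductive step I would use the orthogonal $3$-space decomposition of Proposition \ref{Prop:H_r_construction}(iv), which realises ${\cal H}_{r+1}^{p,q}$ as $\ker d_r^{(\omega)}\cap\ker(d_r^{(\omega)})^\star$ inside ${\cal H}_r^{p,q}$. Since $E_{r+1}$-closedness (resp. $E_{r+1}^\star$-closedness) is the $E_r$-closedness (resp. $E_r^\star$-closedness) tower extended by one equation, it implies $\alpha\in\{E_r\text{-closed}\}\cap\{E_r^\star\text{-closed}\}={\cal H}_r^{p,q}$ by the induction hypothesis; hence it suffices to prove, for $\alpha\in{\cal H}_r^{p,q}$, the two equivalences (A) $d_r^{(\omega)}\alpha=0\iff\alpha$ is $E_{r+1}$-closed and (B) $(d_r^{(\omega)})^\star\alpha=0\iff\alpha$ is $E_{r+1}^\star$-closed.

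Equivalence (A) is the transparent half. By the commutative diagram of Proposition \ref{Prop:H_r_construction}(ii), $d_r^{(\omega)}$ is, under the isomorphism ${\cal H}_r^{p,q}\simeq E_r^{p,q}(X)$, $\alpha\mapsto\{\alpha\}_{E_r}$, the metric realisation of the canonical differential $d_r$. Hence for $\alpha\in{\cal H}_r^{p,q}$ one has $d_r^{(\omega)}\alpha=0$ iff $d_r(\{\alpha\}_{E_r})=0$; and by Proposition \ref{Prop:E_r-closed-exact_conditions}(ii), which computes $d_r(\{\alpha\}_{E_r})=\{\partial u_{r-1}\}_{E_r}$, the vanishing of $d_r(\{\alpha\}_{E_r})$ is exactly the solvability of one further equation $\partial u_{r-1}=\bar\partial u_r$ prolonging the $E_r$-tower of $\alpha$, i.e. the $E_{r+1}$-closedness of $\alpha$.

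Equivalence (B) is the Hodge-dual of (A), and it is where the real work lies. My plan is to transport it to (A) in the complementary bidegree by means of $T$, just as the case $r=2$ of the whole statement is contained in Claim \ref{Claim:Hodge-star_harmonicity}. Two ingredients drive this. First, the identities $\partial^\star=-\star\bar\partial\star$, $\bar\partial^\star=-\star\partial\star$ and $\star\star=\pm\,\mathrm{Id}$ yield the commutation rules $T\partial^\star=\pm\,\partial T$, $T\bar\partial^\star=\pm\,\bar\partial T$ (and their inverses $T\partial=\pm\,\partial^\star T$, $T\bar\partial=\pm\,\bar\partial^\star T$); applying the first pair term by term to the defining tower of Definition \ref{Def:E_r-closed-star} (with $r$ replaced by $r+1$) turns it, after setting $w_l:=\pm\,Tv_l$, into the $E_{r+1}$-closedness tower of $T\alpha$, so that $\alpha$ is $E_{r+1}^\star$-closed if and only if $T\alpha$ is $E_{r+1}$-closed. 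Second, the inductive symmetry $T({\cal H}_r^{p,q})={\cal H}_r^{n-p,n-q}$ places $T\alpha$ in ${\cal H}_r^{n-p,n-q}$. Granting, in addition, that $T$ intertwines the two adjoint operators, $T(d_r^{(\omega)})^\star=\pm\,d_r^{(\omega)}T$, equivalence (B) follows by the chain: $\alpha$ is $E_{r+1}^\star$-closed $\iff T\alpha$ is $E_{r+1}$-closed $\iff d_r^{(\omega)}(T\alpha)=0$ (by (A) in bidegree $(n-p,n-q)$) $\iff (d_r^{(\omega)})^\star\alpha=0$. The symmetry $T({\cal H}_{r+1})={\cal H}_{r+1}$ needed to close the induction then drops out of the equivalence just proved at level $r+1$ together with the tower transformation.

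The intertwining $T(d_r^{(\omega)})^\star=\pm\,d_r^{(\omega)}T$, together with the $T$-invariance of the harmonic spaces, is the main obstacle. In contrast with the pointwise rules for $\partial,\bar\partial$, it concerns the composite, metric-built operator $d_r^{(\omega)}=p_r\partial D_{r-1}p_r$, so the orthogonal projections $p_r$, the Green-type inverses $(\widetilde\Delta^{(s)})^{-1}$ and the factors $D_{r-1}$ must all be tracked, and the non-commutation of $\partial^\star$ with $D_{r-1}^\star$ forbids a naive one-line conjugation argument. The way I would handle it is to prove, by the same induction and in the explicit style of the proof of Claim \ref{Claim:Hodge-star_harmonicity}, that $T$ permutes the $\partial$-type conditions and the $\partial^\star$-type conditions cut out by the pseudo-differential Laplacian $\widetilde\Delta^{(r)}$ of Proposition \ref{Prop:H_r_construction}(iii) — the analogue, for general $r$, of the condition-swaps $(i)\leftrightarrow(ii)$ and $(iii)\leftrightarrow(iii)$ verified there for $r=2$ — using at each stage the commutation rules above together with the $T$-equivariance of $p_s$ and $D_{s-1}$ already secured at lower levels. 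Carrying this bookkeeping through the inductive formula for $\widetilde\Delta^{(r+1)}$ is the technical heart of the argument; everything else is formal.
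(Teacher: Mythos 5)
Your proof has the same skeleton as the paper's: both run the induction on $r$ through the orthogonal decomposition of Proposition \ref{Prop:H_r_construction} (iv), which reduces the statement to your two equivalences (A) and (B), and your (A) is essentially the paper's treatment of the unstarred half. The divergence is in (B): the paper handles the starred half \emph{symmetrically} to the unstarred one, quoting the [Pop19] machinery recorded in Proposition \ref{Prop:H_r_construction} and in the Appendix (Proposition \ref{Prop:appendix_3-space_decomp_E-F}), which identifies $\ker(\partial D_{r-1}p_r)^\star\cap{\cal H}_r^{p,\,q}$ directly with the $E_{r+1}^\star$-closed forms of ${\cal H}_r^{p,\,q}$; no Hodge-star transport occurs. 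You instead derive (B) from (A) in the complementary bidegree via $T=\star\bar{\cdot}\,$, so your entire argument hinges on the intertwining $T(d_r^{(\omega)})^\star=\pm\,d_r^{(\omega)}T$ on harmonic spaces.

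That intertwining is where there is a genuine gap. The bookkeeping you propose --- the pointwise rules $T\partial^\star=\pm\,\partial T$, $T\bar\partial^\star=\pm\,\bar\partial T$ together with ``$T$-equivariance of $p_s$ and $D_{s-1}$'' --- cannot close, because $D_{s-1}$ is \emph{not} $T$-equivariant: conjugating a single factor gives $T\big[(\widetilde\Delta^{(j)})^{-1}\bar\partial^\star\partial\big]T^{-1}=(\widetilde\Delta^{(j)})^{-1}\bar\partial\partial^\star$ (even granting that $T$ commutes with the Green operators), which is a different operator, with the Green operator now hit from the wrong side. For $r=2$ the computation is secretly rescued by the fact that the Green operator of $\Delta''=\widetilde\Delta^{(1)}$ commutes with $\bar\partial$ and $\bar\partial^\star$ --- a fact outside your stated rules and never invoked in your plan. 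For $r\geq 3$, matching $T(d_r^{(\omega)})^\star T^{-1}$ with $\pm\,d_r^{(\omega)}$ would in addition require exchanging the \emph{distinct} Green operators $(\widetilde\Delta^{(1)})^{-1}$ and $(\widetilde\Delta^{(2)})^{-1}$ across $\partial^\star$ (one is led to identities of the type $\bar\partial^\star(\widetilde\Delta^{(2)})^{-1}\partial\,(\widetilde\Delta^{(1)})^{-1}=(\widetilde\Delta^{(1)})^{-1}\bar\partial^\star\partial\,(\widetilde\Delta^{(2)})^{-1}$, up to sandwiching by $p_3$); since $\widetilde\Delta^{(2)}$ commutes with none of $\partial,\bar\partial,\partial^\star,\bar\partial^\star$, no amount of bookkeeping with your commutation rules produces such identities, and there is no evident reason they hold. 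So the step you defer as ``formal'' is in fact the whole difficulty, and as proposed it breaks at the first genuinely new case $r=3$. The intertwining you need is true, but the natural way to get it without the [Pop19] input is by duality rather than operator conjugation: for $\alpha\in{\cal H}_r^{p,\,q}$ and $\beta\in{\cal H}_r^{p-r,\,q+r-1}$ one has $\langle\langle d_r^{(\omega)}\beta,\,\alpha\rangle\rangle=\int_X d_r^{(\omega)}\beta\wedge T\alpha$; the well-definedness of the $E_r$-pairing (Corollary \ref{Cor:well-definedness_Er_duality}) plus a ping-pong integration by parts as in its proof give $\int_X d_r^{(\omega)}\beta\wedge v=\pm\int_X\beta\wedge d_r^{(\omega)}v$ for harmonic $v$, whence $(d_r^{(\omega)})^\star=\pm\,T^{-1}d_r^{(\omega)}T$ on ${\cal H}_r$ with no commutation identities at all.
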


\noindent {\it Proof.} We know from Proposition \ref{Prop:H_r_construction} that $\alpha\in{\cal H}_{r+1}^{p,\,q}$ if and only if $\alpha\in{\cal H}_r^{p,\,q}$ and $\alpha\in\ker d_r^{(\omega)}\cap\ker(d_r^{(\omega)})^\star$. Now, for $\alpha\in{\cal H}_r^{p,\,q}$, the definition of $d_r^{(\omega)}$ shows that $\alpha\in\ker d_r^{(\omega)}$ if and only if $\alpha\in\ker(p_r\partial D_{r-1})$ and this last fact is equivalent to $\alpha$ being $E_{r+1}$-closed. Similarly, for $\alpha\in{\cal H}_r^{p,\,q}$, the definition of $(d_r^{(\omega)})^\star$ shows that $\alpha\in\ker(d_r^{(\omega)})^\star$ if and only if $\alpha\in\ker(\partial D_{r-1}p_r)^\star$ and this last fact is equivalent to $\alpha$ being $E_{r+1}^\star$-closed.   \hfill $\Box$

\vspace{3ex}

\begin{Cor}\label{Cor:E_r_duality} In the setting of Proposition \ref{Prop:E_r-closedness-star-harmonicity}, the following equivalence holds: $$\alpha \hspace{1ex}\mbox{is}\hspace{1ex} E_r\mbox{-closed} \iff \star\bar\alpha \hspace{1ex}\mbox{is}\hspace{1ex} E_r^\star\mbox{-closed}.$$

\end{Cor}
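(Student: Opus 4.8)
The plan is to read off the corollary directly from the two towers of equations that define the relevant notions, using the conjugate-Hodge-star operation $T\colon\gamma\mapsto\star\bar\gamma$. Concretely, if $\alpha\in C^\infty_{p,\,q}(X)$ is $E_r$-closed with witnesses $u_l\in C^\infty_{p+l,\,q-l}(X)$ satisfying the tower of (i) in Proposition \ref{Prop:E_r-closed-exact_conditions}, I would set $v_l:=\star\bar u_l$ and show that these $v_l$ witness the $E_r^\star$-closedness of $\star\bar\alpha$ in the sense of Definition \ref{Def:E_r-closed-star}, and conversely. The point is that $T$ is a conjugate-linear bijection on forms, with $T^2=(-1)^{p+q}\,\Id$ on forms of total degree $p+q$ (since $\star$ commutes with conjugation and $\star\star=(-1)^{p+q}$), so the passage $u_l\leftrightarrow v_l$ is invertible and every step below is an equivalence; hence existence of witnesses on one side is equivalent to existence on the other, which is exactly the claim.

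The two elementary identities I would isolate, both immediate from the Hodge-star formulae already used in the proof of Claim \ref{Claim:Hodge-star_harmonicity} (namely $\star\star=(-1)^{p+q}$ on total degree $p+q$, $\partial^\star=-\star\bar\partial\star$, $\bar\partial^\star=-\star\partial\star$, and $\overline{\star\gamma}=\star\bar\gamma$), are the following. First, for $\gamma$ of total degree $p+q$,
\[
\bar\partial^\star(\star\bar\gamma)=-(-1)^{p+q}\,\star\overline{\bar\partial\gamma},
\]
so that $\bar\partial\gamma=0\iff\bar\partial^\star(\star\bar\gamma)=0$; applied to $\gamma=\alpha$ this matches the top equations of the two towers. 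Second, for $\phi,\psi$ of total degree $p+q$, the same manipulation gives
\[
\partial^\star(\star\bar\phi)=-(-1)^{p+q}\,\star\overline{\partial\phi}\qquad\text{and}\qquad\bar\partial^\star(\star\bar\psi)=-(-1)^{p+q}\,\star\overline{\bar\partial\psi},
\]
whence $\partial\phi=\bar\partial\psi\iff\partial^\star(\star\bar\phi)=\bar\partial^\star(\star\bar\psi)$. Applying this with $(\phi,\psi)=(u_{l-1},u_l)$ for $l=1,\dots,r-1$ (where $u_0:=\alpha$, $v_0:=\star\bar\alpha$) turns each running equation $\partial u_{l-1}=\bar\partial u_l$ into $\partial^\star v_{l-1}=\bar\partial^\star v_l$, and conversely. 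Stacking these equivalences converts the whole $E_r$-closedness tower for $\alpha$ into the whole $E_r^\star$-closedness tower for $\star\bar\alpha$.

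The only thing requiring care — and the closest this proof comes to an obstacle — is the bidegree and total-degree bookkeeping. I would check that all the forms $\alpha,u_1,\dots,u_{r-1}$ share the constant total degree $(p+l)+(q-l)=p+q$, so that the scalar $(-1)^{p+q}$ arising from $\star\star$ is uniform along the tower; and that $v_l=\star\bar u_l$ lands in the correct space, namely $\star\bar u_l\in C^\infty_{n-p-l,\,n-q+l}(X)=C^\infty_{(n-p)-l,\,(n-q)+l}(X)$, which is precisely the index-$l$ slot demanded by Definition \ref{Def:E_r-closed-star} for the $(n-p,\,n-q)$-form $\star\bar\alpha$. Beyond this routine bookkeeping there is no analytic content: everything reduces to the formal Hodge-star relations, and the bijectivity of $T$ supplies the converse implication automatically.
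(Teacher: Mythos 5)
Your proof is correct and follows essentially the same route as the paper's: both transform the $E_r$-closedness tower for $\alpha$ by conjugation and the Hodge star, use the identities $-\star\partial\star=\bar\partial^\star$ and $-\star\bar\partial\star=\partial^\star$, and take $v_l=\star\bar u_l$ as the witnesses for the $E_r^\star$-closedness of $\star\bar\alpha$, with invertibility of $\gamma\mapsto\star\bar\gamma$ giving the converse. Your version merely makes the sign and bidegree bookkeeping explicit, which the paper leaves implicit.
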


\noindent {\it Proof.} We know from (i) of Proposition \ref{Prop:E_r-closed-exact_conditions} that $\alpha$ is $E_r$-closed if and only if there exist forms $u_l\in C^\infty_{p+l,\,q-l}(X)$ for $l=1,\dots , r-1$ such that $$(-\star\partial\star)\,\star\bar\alpha=0, \hspace{1ex} (-\star\bar\partial\star)\,\star\bar\alpha=(-\star\partial\star)\,\star\bar{u}_1, \dots , (-\star\bar\partial\star)\,\star\bar{u}_{r-2}=(-\star\partial\star)\,\star\bar{u}_{r-1}.$$

\noindent Indeed, we have transformed the $E_r$-closedness condition of (i) in Proposition \ref{Prop:E_r-closed-exact_conditions} by conjugating and applying the Hodge star operator several times. Since $-\star\partial\star = \bar\partial^\star$ and $-\star\bar\partial\star = \partial^\star$, the above conditions are equivalent to $\star\bar\alpha$ being $E_r^\star$-closed (with the forms $\star\bar{u}_l$ playing the part of the forms $v_l$).   \hfill $\Box$

\vspace{3ex}

An immediate (and new to our knowledge) consequence of this discussion is the analogue on every page $E_r$ of the Fr\"olicher spectral sequence of the classical {\bf Serre duality}. The well-definedness was proved in Corollary \ref{Cor:well-definedness_Er_duality}. The case $r=1$ is the Serre duality, while the case $r=2$ was proved in Theorem \ref{The:duality_E2}.

\begin{Cor}\label{Cor:Er_duality} Let $X$ be a compact complex manifold with $\mbox{dim}_\C X=n$. For every $r\in\N_{>0}$ and every $p,q\in\{0,\dots , n\}$, the canonical bilinear pairing  \begin{equation*} E_r^{p,\,q}(X)\times E_r^{n-p,\,n-q}(X)\longrightarrow\C, \hspace{3ex} (\{\alpha\}_{E_r},\,\{\beta\}_{E_r})\mapsto \int\limits_X\alpha\wedge\beta,\end{equation*} \noindent is {\bf non-degenerate}.

  \end{Cor}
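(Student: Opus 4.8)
The plan is to follow the template of the proof of Theorem~\ref{The:duality_E2}, but with the single pseudo-differential Laplacian $\widetilde\Delta$ replaced by the whole hierarchy of harmonic spaces ${\cal H}_r^{p,\,q}$ furnished by Proposition~\ref{Prop:H_r_construction}. Since well-definedness has already been settled in Corollary~\ref{Cor:well-definedness_Er_duality}, only non-degeneracy remains. First I would fix an arbitrary Hermitian metric $\omega$ on $X$ and invoke Proposition~\ref{Prop:H_r_construction}(iv), by which ${\cal H}_r^{p,\,q}\simeq E_r^{p,\,q}(X)$, so that every $E_r$-class has a unique ${\cal H}_r$-harmonic representative.

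The crux is the higher-page analogue of Claim~\ref{Claim:Hodge-star_harmonicity}, namely the equivalence
$$\alpha\in{\cal H}_r^{p,\,q}\iff \star\bar\alpha\in{\cal H}_r^{n-p,\,n-q}.$$
To prove it I would combine Proposition~\ref{Prop:E_r-closedness-star-harmonicity}, which characterises ${\cal H}_r$-harmonicity as the conjunction of $E_r$-closedness and $E_r^\star$-closedness, with Corollary~\ref{Cor:E_r_duality}, which asserts that $\alpha$ is $E_r$-closed if and only if $\star\bar\alpha$ is $E_r^\star$-closed. Tracking bidegrees confirms the target space: $\bar\alpha$ is of type $(q,\,p)$ and $\star$ carries it to type $(n-p,\,n-q)$.

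The one genuine obstacle — everything else being formal — is that Corollary~\ref{Cor:E_r_duality} supplies only one of the two closedness correspondences, whereas harmonicity requires both. To obtain the companion statement, that $\alpha$ is $E_r^\star$-closed if and only if $\star\bar\alpha$ is $E_r$-closed, I would apply Corollary~\ref{Cor:E_r_duality} to $\beta:=\star\bar\alpha$ and use the reality of the Hodge star, $\overline{\star\eta}=\star\bar\eta$, together with $\star\star=(-1)^{p+q}$ on $(p,\,q)$-forms; these give $\star\bar\beta=(-1)^{p+q}\alpha$, so that $\beta$ being $E_r$-closed is equivalent to $\alpha$ being $E_r^\star$-closed, the sign being irrelevant because the defining tower of equations is linear. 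Combining the two correspondences through Proposition~\ref{Prop:E_r-closedness-star-harmonicity} then yields the displayed Hodge-star harmonicity equivalence.

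Finally I would conclude exactly as in Theorem~\ref{The:duality_E2}: given any non-zero class $\{\alpha\}_{E_r}\in E_r^{p,\,q}(X)$, let $\alpha\neq 0$ be its unique ${\cal H}_r$-harmonic representative. By the equivalence above, $\star\bar\alpha\in{\cal H}_r^{n-p,\,n-q}\setminus\{0\}$, so it represents a class in $E_r^{n-p,\,n-q}(X)$, and the pairing sends $(\{\alpha\}_{E_r},\,\{\star\bar\alpha\}_{E_r})$ to $\int_X\alpha\wedge\star\bar\alpha=\int_X|\alpha|^2_\omega\,dV_\omega=||\alpha||^2_{L^2_\omega}\neq 0$. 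As $p,q$ and $\alpha$ were arbitrary, the pairing is non-degenerate. The main difficulty is thus confined to the bidirectional closedness/$\star$-closedness bookkeeping; once the Hodge theory of Proposition~\ref{Prop:H_r_construction} is available, the positivity argument closes the proof immediately.
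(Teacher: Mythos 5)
Your proof is correct and follows essentially the same route as the paper's: the harmonic representative furnished by Proposition \ref{Prop:H_r_construction}, the characterisation of ${\cal H}_r$-harmonicity from Proposition \ref{Prop:E_r-closedness-star-harmonicity}, the Hodge-star correspondence of Corollary \ref{Cor:E_r_duality}, and the positivity of $\int_X\alpha\wedge\star\bar\alpha=||\alpha||^2_{L^2_\omega}$. The only difference is that you explicitly verify the companion direction ($\alpha$ is $E_r^\star$-closed $\iff$ $\star\bar\alpha$ is $E_r$-closed) by applying Corollary \ref{Cor:E_r_duality} to $\star\bar\alpha$ together with $\star\star=(-1)^{p+q}$ and the reality of $\star$ --- a step the paper uses implicitly when it reads that corollary in both directions --- so your write-up is, if anything, slightly more complete.
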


\noindent {\it Proof.} Let $\{\alpha\}_{E_r}\in E_r^{p,\,q}(X)\setminus\{0\}$. If we fix an arbitrary Hermitian metric $\omega$ on $X$, we know from Proposition \ref{Prop:H_r_construction} that the associated harmonic space ${\cal H}_r^{p,\,q}$ is isomorphic to $E_r^{p,\,q}(X)$ and that the class $\{\alpha\}_{E_r}$ contains a (unique) representative $\alpha$ lying in ${\cal H}_r^{p,\,q}$. By Proposition \ref{Prop:E_r-closedness-star-harmonicity}, this is equivalent to $\alpha$ being both $E_r$-closed and $E_r^\star$-closed, while by Corollary \ref{Cor:E_r_duality}, this is further equivalent to $\star\bar\alpha$ being both $E_r^\star$-closed and $E_r$-closed, hence to $\star\bar\alpha$ lying in ${\cal H}_r^{n-p,\,n-q}$.

 In particular, $\star\bar\alpha$ represents a non-zero class $\{\star\bar\alpha\}_{E_r}\in E_r^{n-p,\,n-q}(X)$. We have $$(\{\alpha\}_{E_r},\,\{\star\bar\alpha\}_{E_r})\mapsto \int\limits_X\alpha\wedge\star\bar\alpha = ||\alpha||^2>0,$$

\noindent where $||\,\,\,||$ stands for the $L^2_\omega$-norm. This shows that for every non-zero class $\{\alpha\}_{E_r}\in E_r^{p,\,q}(X)$, the map $(\{\alpha\}_{E_r},\,\cdot):E_r^{n-p,\,n-q}(X)\longrightarrow\C$ does not vanish identically, proving the non-degeneracy of the pairing.  \hfill $\Box$

\begin{Rem}\label{Rem:alternative-approaches_E_r-duality} The numerical version of Corollary \ref{Cor:Er_duality} was proved in [Pop17] and the present version and its method of proof were already announced and used in various works by the first and third named authors. See, e.g. [BP18, $\S.3.4$]. There is no purely algebraic proof for these kind of results. Using some algebraic machinery, one could limit the analytic input to the classical Serre duality (see [Mil20] and [Ste20], the latter also applying to higher-page Bott-Chern and Aeppli cohomology). The harmonic methods used here and in the next section give some finer information besides the duality, which is interesting in its own right and useful in applications (see e.g. Lemma \ref{Lem:E_rA_pos-cones}).
\end{Rem}

\section{Higher-page Bott-Chern and Aeppli cohomologies: definition, Hodge theory and duality}\label{section:higher-page_BC-A} Let $X$ be an $n$-dimensional compact complex manifold. Fix an arbitrary positive integer $r$ and a bidegree $(p,\,q)$. In $\S.$\ref{subsection:duality_Er}, we defined the notions of $E_r$-closedness and $E_r$-exactness for forms $\alpha\in C^\infty_{p,\,q}(X)$ as higher-page analogues of $\bar\partial$-closedness (that can now be called $E_1$-closedness) and $\bar\partial$-exactness (that can now be called $E_1$-exactness). We then gave these notions explicit descriptions in Proposition \ref{Prop:E_r-closed-exact_conditions}.

In the same vein, we say that $\alpha$ is {\it $\overline{E}_r$-closed} if $\bar\alpha$ is $E_r$-closed and we say that $\alpha$ is {\it $\overline{E}_r$-exact} if $\bar\alpha$ is $E_r$-exact.  In particular, characterisations of $\overline{E}_r$-closedness and $\overline{E}_r$-exactness are obtained by permuting $\partial$ and $\bar\partial$ in the characterisations of $E_r$-closedness and $E_r$-exactness of Proposition \ref{Prop:E_r-closed-exact_conditions}.

Moreover, we can take our cue from Proposition \ref{Prop:E_r-closed-exact_conditions} to define higher-page analogues of $\partial\bar\partial$-closedness and $\partial\bar\partial$-exactness in the following way.

\begin{Def}\label{Def:E_rE_r-bar} Suppose that $r\geq 2$.

\vspace{1ex}

(i)\, We say that a form $\alpha\in C^\infty_{p,\,q}(X)$ is {\bf $E_r\overline{E}_r$-closed} if there exist smooth forms $\eta_1,\dots , \eta_{r-1}$ and $\rho_1,\dots , \rho_{r-1}$ such that the following two towers of $r-1$ equations are satisfied: \begin{align*}\label{eqn:towers_E_rE_r-bar-closedness} \partial\alpha & = \bar\partial\eta_1 &    \bar\partial\alpha & = \partial\rho_1 & \\
    \partial\eta_1 & = \bar\partial\eta_2 &   \bar\partial\rho_1 & = \partial\rho_2 & \\
     \vdots & & \\
     \partial\eta_{r-2} & =  \bar\partial\eta_{r-1},  &  \bar\partial\rho_{r-2} & = \partial\rho_{r-1}. &\end{align*}

\vspace{1ex}

\vspace{1ex}

(ii)\, We refer to the properties of $\alpha$ in the two towers of $(r-1)$ equations under (i) by saying that $\partial\alpha$, resp. $\bar\partial\alpha$, {\bf runs at least $(r-1)$ times}.

(iii)\, We say that a form $\alpha\in C^\infty_{p,\,q}(X)$ is {\bf $E_r\overline{E}_r$-exact} if there exist smooth forms $\zeta, \xi, \eta$ such that \begin{equation}\label{eqn:main-eq_E_rE_r-bar-exactness}\alpha = \partial\zeta + \partial\bar\partial\xi + \bar\partial\eta\end{equation}

\noindent and such that $\zeta$ and $\eta$ further satisfy the following conditions. There exist smooth forms $v_{r-3},\dots , v_0$ and $u_{r-3},\dots , u_0$ such that the following two towers of $r-1$ equations are satisfied: \begin{align*}\label{eqn:towers_E_rE_r-bar-exactness} \bar\partial\zeta & = \partial v_{r-3} &    \partial\eta & = \bar\partial u_{r-3} & \\
     \bar\partial v_{r-3} & = \partial v_{r-4} &   \partial u_{r-3} & = \bar\partial u_{r-4} & \\
     \vdots & & \\
     \bar\partial v_0 & =  0,  &  \partial u_0 & = 0. &\end{align*}

\vspace{1ex}

(iv)\, We refer to the properties of $\zeta$, resp. $\eta$, in the two towers of $(r-1)$ equations under (iii) by saying that $\bar\partial\zeta$, resp. $\partial\eta$, {\bf reaches $0$ in at most $(r-1)$ steps}. 

\end{Def}

 When $r-1=1$, the properties of $\bar\partial\zeta$, resp. $\partial\eta$, {\it reaching $0$ in $(r-1)$ steps} translate to $\bar\partial\zeta=0$, resp. $\partial\eta=0$.

To unify the definitions, we will also say that a form $\alpha\in C^\infty_{p,\,q}(X)$ is {\bf $E_1\overline{E}_1$-closed} (resp. {\bf $E_1\overline{E}_1$-exact}) if $\alpha$ is $\partial\bar\partial$-closed (resp. $\partial\bar\partial$-exact).

As with $E_r$ and $\overline{E}_r$, it follows at once from Definition \ref{Def:E_rE_r-bar} that the $E_r\overline{E}_r$-closedness condition becomes stronger and stronger as $r$ increases, while the $E_r\overline{E}_r$-exactness condition becomes weaker and weaker as $r$ increases. In other words, the following inclusions of vector spaces hold:

\vspace{2ex}

\noindent $\{\partial\bar\partial\mbox{-exact forms}\}\subset\dots\subset\{E_r\overline{E}_r\mbox{-exact forms}\}\subset\{E_{r+1}\overline{E}_{r+1}\mbox{-exact forms}\}\subset\dots $

\vspace{1ex}

\hfill $\dots\subset\{E_{r+1}\overline{E}_{r+1}\mbox{-closed forms}\}\subset\{E_r\overline{E}_r\mbox{-closed forms}\}\subset\dots\subset\{\partial\bar\partial\mbox{-closed forms}\}.$

\vspace{3ex}

The following statement collects a few other immediate relations among these notions.

\begin{Lem}\label{Lem:E_rE_r-bar-properties} Fix an arbitrary $r\in\N_{>0}$.

  \vspace{1ex}

  (i)\, A pure-type form $\alpha$ is simultaneously $E_r$-closed and $\overline{E}_r$-closed if and only if $\alpha$ is simultaneously $\partial$-closed and $\bar\partial$-closed. This is further equivalent to $\alpha$ being $d$-closed.

  \vspace{1ex}
  
  (ii)\, If $\alpha$ is $E_r\overline{E}_r$-exact, then each of the classes $\{\alpha\}_{E_r}$ and $\{\alpha\}_{\overline{E}_r}$ contains a $\partial\bar\partial$-exact form and $\alpha$ is both $E_r$-exact and $\overline{E}_r$-exact.

  \vspace{1ex}

  (iii)\, Fix any bidegree $(p,\,q)$ and let $\alpha\in C^\infty_{p,\,q}(X)$. If $\alpha$ is $E_r\overline{E}_r$-exact for some $r\in\N_{>0}$, then $\alpha$ is $d$-exact.

\end{Lem}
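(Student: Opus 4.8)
I would prove the three parts in order, using part (i) as a warm-up and then proving (ii) and (iii) by separate but parallel manipulations of the defining towers of Definition \ref{Def:E_rE_r-bar}(iii).

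For part (i), the plan is simply to read off the leading equations of the defining towers. By Proposition \ref{Prop:E_r-closed-exact_conditions}(i), the first equation in the $E_r$-closedness tower is $\bar\partial\alpha = 0$, while conjugating the definition of $\overline{E}_r$-closedness shows that its leading equation is $\partial\alpha = 0$. Hence $\alpha$ being simultaneously $E_r$- and $\overline{E}_r$-closed forces $\partial\alpha = \bar\partial\alpha = 0$. Conversely, if $\alpha$ is both $\partial$- and $\bar\partial$-closed, then taking all auxiliary forms $u_l$ (resp. $\rho_l$) equal to zero exhibits $\alpha$ as $E_r$-closed (resp. $\overline{E}_r$-closed). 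Finally, for a pure-type $(p,q)$-form $d\alpha = \partial\alpha + \bar\partial\alpha$ splits into the distinct bidegrees $(p+1,q)$ and $(p,q+1)$, so $d\alpha = 0$ iff both summands vanish; this gives the last equivalence. The case $r=1$ is immediate from the conventions.

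For part (ii), I would start from the defining decomposition $\alpha = \partial\zeta + \partial\bar\partial\xi + \bar\partial\eta$ and isolate the mixed term. The form $\partial\bar\partial\xi = \partial(\bar\partial\xi)$ is both $E_r$-exact and $\overline{E}_r$-exact, since its primitive $\bar\partial\xi$ is $\bar\partial$-closed (so the $E_r$-exactness tower of Proposition \ref{Prop:E_r-closed-exact_conditions}(iii) holds trivially), and symmetrically $\partial\bar\partial\xi = \bar\partial(-\partial\xi)$ with $-\partial\xi$ being $\partial$-closed. Since $\zeta$ satisfies the tower asserting that $\bar\partial\zeta$ reaches $0$ in at most $(r-1)$ steps, the form $\partial\zeta$ is $E_r$-exact by Proposition \ref{Prop:E_r-closed-exact_conditions}(iii) (take the arbitrary $\bar\partial$-primitive to be $0$), and $\bar\partial\eta$ is $E_r$-exact with $\eta$ playing the role of the arbitrary primitive. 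As $E_r$-exact forms form a vector space, $\partial\zeta + \bar\partial\eta$ is $E_r$-exact, so $\{\alpha\}_{E_r} = \{\partial\bar\partial\xi\}_{E_r}$; this both exhibits a $\partial\bar\partial$-exact representative in $\{\alpha\}_{E_r}$ and, since $\partial\bar\partial\xi$ is itself $E_r$-exact, shows $\{\alpha\}_{E_r}=0$. The assertions for $\overline{E}_r$ follow by the symmetric argument, using that $\eta$ satisfies the $\partial$-tower and that $\partial\bar\partial\xi = \bar\partial(-\partial\xi)$.

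Part (iii) is the main point, and where I expect the real work to lie. The plan is a telescoping argument combining the two towers. First I would absorb the mixed term exactly: since $\partial\bar\partial\xi + \bar\partial\eta = \bar\partial(\eta - \partial\xi)$, setting $\tilde\eta := \eta - \partial\xi$ gives $\alpha = \partial\zeta + \bar\partial\tilde\eta$, and $\partial\tilde\eta = \partial\eta$, so $\tilde\eta$ still satisfies the $\partial$-tower of Definition \ref{Def:E_rE_r-bar}(iii). Next I would use $\partial\zeta + \bar\partial\tilde\eta = d(\zeta + \tilde\eta) - (\bar\partial\zeta + \partial\tilde\eta)$ and recognise $\bar\partial\zeta + \partial\tilde\eta = \partial v_{r-3} + \bar\partial u_{r-3}$ from the leading tower equations. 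The crucial observation is the recursion: setting $P_k := \partial v_k + \bar\partial u_k$, the identity $P_k = d(v_k + u_k) - (\bar\partial v_k + \partial u_k)$ together with $\bar\partial v_k = \partial v_{k-1}$ and $\partial u_k = \bar\partial u_{k-1}$ gives $P_k = d(v_k+u_k) - P_{k-1}$, while at the bottom $\bar\partial v_0 = \partial u_0 = 0$ yields $P_0 = d(v_0 + u_0)$. By induction every $P_k$, in particular $P_{r-3}$, is $d$-exact, whence $\alpha = d(\zeta + \tilde\eta) - P_{r-3}$ is $d$-exact. The only subtlety is the bookkeeping of the degenerate low-$r$ cases (for $r=2$ the towers read $\bar\partial\zeta = \partial\tilde\eta = 0$, so $\alpha = d(\zeta+\tilde\eta)$ directly, and $r=1$ is $\partial\bar\partial$-exactness, equal to $d(\bar\partial\xi)$), which I would dispatch as the base cases of the induction.
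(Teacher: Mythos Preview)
Your proof is correct and follows essentially the same telescoping approach as the paper. The only cosmetic difference is in part (iii): the paper handles the three summands $\partial\zeta$, $\partial\bar\partial\xi$, $\bar\partial\eta$ separately, showing each is $d$-exact by running down its own tower (e.g.\ $\partial\zeta = d\zeta - \bar\partial\zeta = d\zeta - \partial v_{r-3} = d(\zeta - v_{r-3}) + \bar\partial v_{r-3} = \dots$), whereas you first absorb the middle term into $\bar\partial\tilde\eta$ and then run both towers simultaneously via the recursion $P_k = d(v_k+u_k) - P_{k-1}$; the underlying idea is identical.
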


\noindent {\it Proof.} (i) is obvious. To see (ii), let $\alpha = \partial\zeta + \partial\bar\partial\xi + \bar\partial\eta$ be $E_r\overline{E}_r$-exact, with $\zeta$ and $\eta$ satisfying the conditions under (ii) of Definition \ref{Def:E_rE_r-bar}. Then \[\{\alpha\}_{E_r} =\{\alpha-\partial\zeta-\bar\partial\eta\}_{E_r}=\{\partial\bar\partial\xi\}_{E_r}\hspace{3ex} \mbox{and} \hspace{3ex} \{\alpha\}_{\overline{E}_r} =\{\alpha-\partial\zeta-\bar\partial\eta\}_{\overline{E}_r}=\{\partial\bar\partial\xi\}_{\overline{E}_r},\]

\noindent while $\alpha = \partial\zeta + \bar\partial(-\partial\xi + \eta)$ is $E_r$-exact and $\alpha = \partial(\zeta + \bar\partial\xi) + \bar\partial\eta$ is $\overline{E}_r$-exact.

To prove (iii), let $\alpha = \partial\zeta + \partial\bar\partial\xi + \bar\partial\eta$, where $\zeta$ and $\eta$ satisfy the conditions in the two towers under (ii) of Definition \ref{Def:E_rE_r-bar}. Going down the first tower, we get \[\partial\zeta = d\zeta - \bar\partial\zeta = d\zeta - \partial v_{r-3} = d(\zeta - v_{r-3}) + \partial v_{r-4} = \dots = d(\zeta - v_{r-3} + \dots + (-1)^r\,v_0).\]

\noindent In particular, $\partial\zeta$ is $d$-exact.

Similarly, going down the second tower, we get \[\bar\partial\eta = d(\eta - u_{r-3} + \dots + (-1)^r\,u_0).\]

\noindent In particular, $\bar\partial\eta$ is $d$-exact.

Since $\partial\bar\partial\xi$ is also $d$-exact, we infer that $\alpha$ is $d$-exact. Explicitly, we have \[\alpha = d[(\zeta + \eta) + \bar\partial\xi - w_{r-3} + \dots + (-1)^r\,w_0],\]

\noindent where $w_j:=u_j+v_j$ for all $j$.  \hfill $\Box$

\vspace{3ex}

The main takeaway from Lemma \ref{Lem:E_rE_r-bar-properties} is that $E_r\overline{E}_r$-exactness implies $E_r$-exactness, $\overline{E}_r$-exactness and $d$-exactness. Let us now pause briefly to notice a property involving the spaces ${\cal C}^{p,\,q}_r$ of $E_r$-exact $(p,\,q)$-forms, resp. $\overline{\cal C}^{p,\,q}_r$ of $\overline{E}_r$-exact $(p,\,q)$-forms.

\begin{Lem}\label{Lem:E_r-exact_E_r-bar-exact_sum} Fix an arbitrary $r\in\N_{>0}$. For any bidegree $(p,\,q)$, the following identity of vector subspaces of $C^\infty_{p,\,q}(X)$ holds: \[{\cal C}^{p,\,q}_r + \overline{\cal C}^{p,\,q}_r = \mbox{Im}\,\partial + \mbox{Im}\,\bar\partial.\]

\end{Lem}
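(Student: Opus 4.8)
The plan is to prove the asserted equality of subspaces by two opposite inclusions, the second of which I will split into a pair of one-sided containments. The case $r=1$ is immediate: by part (iv) of Proposition \ref{Prop:E_r-closed-exact_conditions} one has ${\cal C}_1^{p,\,q}=(\mbox{Im}\,\bar\partial)^{p,\,q}$, and by conjugation $\overline{\cal C}_1^{p,\,q}=(\mbox{Im}\,\partial)^{p,\,q}$, so the claim reduces to a triviality. I therefore focus on $r\geq 2$, where the explicit descriptions of part (iii) of Proposition \ref{Prop:E_r-closed-exact_conditions} are available.

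For the inclusion ${\cal C}_r^{p,\,q}+\overline{\cal C}_r^{p,\,q}\subseteq\mbox{Im}\,\partial+\mbox{Im}\,\bar\partial$, I would simply read off the shape of the exact forms. By (iii) of Proposition \ref{Prop:E_r-closed-exact_conditions}, every $E_r$-exact form is of the form $\partial\zeta+\bar\partial\xi$ for suitable $\zeta,\xi$, hence lies in $\mbox{Im}\,\partial+\mbox{Im}\,\bar\partial$; conjugating this characterisation, every $\overline{E}_r$-exact form has the form $\partial s+\bar\partial t$ and so lies there as well. Summing gives the containment.

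For the reverse inclusion I would establish the two one-sided statements $\mbox{Im}\,\bar\partial\subseteq{\cal C}_r^{p,\,q}$ and $\mbox{Im}\,\partial\subseteq\overline{\cal C}_r^{p,\,q}$. For the first, given $\bar\partial\xi$, I apply the $E_r$-exactness criterion of Proposition \ref{Prop:E_r-closed-exact_conditions}(iii) with the choice $\zeta=0$: then the entire tower $\bar\partial\zeta=\partial v_{r-3},\ \dots,\ \bar\partial v_0=0$ is satisfied by taking all $v_j=0$, so $\bar\partial\xi=\partial 0+\bar\partial\xi$ is $E_r$-exact. The second containment follows by conjugating this argument (choosing the auxiliary $t=0$ in the conjugate tower). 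Combining, $\mbox{Im}\,\partial+\mbox{Im}\,\bar\partial\subseteq\overline{\cal C}_r^{p,\,q}+{\cal C}_r^{p,\,q}$, which closes the circle of inclusions.

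The point to watch is that neither $\mbox{Im}\,\partial$ nor $\mbox{Im}\,\bar\partial$ is individually contained in ${\cal C}_r^{p,\,q}$: a $\partial$-exact form $\partial a$ need not be $E_r$-exact, because the obligatory tower would force $\bar\partial a$ to reach $0$ in at most $(r-1)$ steps, which fails in general. This is precisely why the sum of \emph{both} spaces appears, and why the decomposition must route $\mbox{Im}\,\bar\partial$ through ${\cal C}_r^{p,\,q}$ and $\mbox{Im}\,\partial$ through $\overline{\cal C}_r^{p,\,q}$; the only real content of the lemma is this routing, made possible by the freedom to set the constrained potential equal to zero.
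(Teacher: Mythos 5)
Your proof is correct and follows essentially the same route as the paper: both rest on the explicit description of $E_r$-exactness from Proposition \ref{Prop:E_r-closed-exact_conditions}(iii), which the paper compresses into the identities ${\cal C}^{p,\,q}_r=\partial({\cal E}^{p,\,q}_{\bar\partial,\,r-1})+\mbox{Im}\,\bar\partial$ and $\overline{\cal C}^{p,\,q}_r=\mbox{Im}\,\partial+\bar\partial({\cal E}^{p,\,q}_{\partial,\,r-1})$ and declares the conclusion trivial, while you spell out the two inclusions (the forward one by reading off shapes, the reverse one via the choice $\zeta=0$ with all $v_j=0$ and its conjugate). The only quibble is your closing aside: the failure of $\mbox{Im}\,\partial\subset{\cal C}^{p,\,q}_r$ is real, but your justification is loose, since $E_r$-exactness of $\partial a$ requires only that \emph{some} decomposition $\partial a=\partial\zeta+\bar\partial\xi$ with $\zeta$ satisfying the tower exists, not that $\zeta=a$ itself satisfy it.
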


\noindent {\it Proof.} For any bidegree $(p,\,q)$, consider the vector spaces (see (iv) of Definition \ref{Def:E_rE_r-bar} for the terminology): \begin{eqnarray*}{\cal E}^{p,\,q}_{\partial,\,r} &:= & \{\alpha\in C^\infty_{p,\,q}(X)\,\mid\,\partial\alpha\hspace{1ex}\mbox{reaches 0 in at most r steps}\}, \\
 {\cal E}^{p,\,q}_{\bar\partial,\,r} & := & \{\beta\in C^\infty_{p,\,q}(X)\,\mid\,\bar\partial\beta\hspace{1ex}\mbox{reaches 0 in at most r steps}\}.\end{eqnarray*}

From the definitions, we get: ${\cal C}^{p,\,q}_r=\partial({\cal E}^{p,\,q}_{\bar\partial,\,{r-1}}) + \mbox{Im}\,\bar\partial$ and $\overline{\cal C}^{p,\,q}_r = \mbox{Im}\,\partial + \bar\partial({\cal E}^{p,\,q}_{\partial,\,{r-1}})$. This trivially implies the contention.  \hfill $\Box$

\vspace{3ex}

We now come to the main definitions of this subsection.

\begin{Def}\label{Def:E_r-BC_E_r-A} Let $X$ be an $n$-dimensional compact complex manifold. Fix $r\in\N_{>0}$ and a bidegree $(p,\,q)$.

\vspace{1ex}

(i)\, The {\bf $E_r$-Bott-Chern} cohomology group of bidegree $(p,\,q)$ of $X$ is defined as the following quotient complex vector space: \[E_{r,\,BC}^{p,\,q}(X):=\frac{\{\alpha\in C^\infty_{p,\,q}(X)\,\mid\,d\alpha=0\}}{\{\alpha\in C^\infty_{p,\,q}(X)\,\mid\,\alpha\hspace{1ex}\mbox{is}\hspace{1ex} E_r\overline{E}_r\mbox{-exact}\}}.\]

\vspace{1ex}

(ii)\, The {\bf $E_r$-Aeppli} cohomology group of bidegree $(p,\,q)$ of $X$ is defined as the following quotient complex vector space: \[E_{r,\,A}^{p,\,q}(X):=\frac{\{\alpha\in C^\infty_{p,\,q}(X)\,\mid\,\alpha\hspace{1ex}\mbox{is}\hspace{1ex} E_r\overline{E}_r-\mbox{closed}\}}{\{\alpha\in C^\infty_{p,\,q}(X)\,\mid\,\alpha\in\mbox{Im}\,\partial + \mbox{Im}\,\bar\partial\}}.\]

\end{Def}

When $r=1$, the above groups coincide with the standard Bott-Chern, respectively Aeppli, cohomology groups (see [BC65] and [Aep62]). Note that, by (i) of Lemma \ref{Lem:E_rE_r-bar-properties}, the representatives of $E_r$-Bott-Chern classes can be alternatively described as the forms that are simultaneously $E_r$-closed and $\overline{E}_r$-closed, while by Lemma \ref{Lem:E_r-exact_E_r-bar-exact_sum}, the $E_r$-Aeppli-exact forms can be alternatively described as those forms lying in ${\cal C}^{p,\,q}_r + \overline{\cal C}^{p,\,q}_r$.

Also note that the inclusions of vector spaces spelt out just before Lemma \ref{Lem:E_rE_r-bar-properties} and their analogues for the $E_r$- and $\overline{E}_r$-cohomologies imply the following inequalities of dimensions: \[\dots\leq\mbox{dim}\,E_{r,\,BC}^{p,\,q}(X)\leq\mbox{dim}\,E_{r-1,\,BC}^{p,\,q}(X)\leq\dots\leq\mbox{dim}\,E_{1,\,BC}^{p,\,q}(X)=\mbox{dim}\,H_{BC}^{p,\,q}(X)\]

\noindent and their analogues for the $E_r$-Aeppli cohomology spaces.

The first step towards extending to the higher pages of the Fr\"olicher spectral sequence the standard Serre-type duality between the classical Bott-Chern and Aeppli cohomology groups of complementary bidegrees is the following

\begin{Prop}\label{Prop:E_r_BC-A_duality1} Let $X$ be a compact complex manifold with $\mbox{dim}_\C X=n$. For every $r\in\N_{>0}$ and all $p,q\in\{0,\dots , n\}$, the following bilinear pairing is {\bf well defined}: \[E_{r,\,BC}^{p,\,q}(X)\times E_{r,\,A}^{n-p,\,n-q}(X)\longrightarrow\C, \hspace{3ex} \bigg(\{\alpha\}_{E_{r,\,BC}},\,\{\beta\}_{E_{r,\,A}}\bigg)\mapsto\int\limits_X\alpha\wedge\beta,\]

\noindent in the sense that it is independent of the choice of representative of either of the classes $\{\alpha\}_{E_{r,\,BC}}$ and $\{\beta\}_{E_{r,\,A}}$.

\end{Prop}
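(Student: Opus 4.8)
The plan is to verify separately that $\int_X\alpha\wedge\beta$ is unchanged when $\alpha$ is modified within its $E_{r,\,BC}$-class and when $\beta$ is modified within its $E_{r,\,A}$-class. By bilinearity this reduces to two vanishing statements: first, that $\int_X\alpha'\wedge\beta=0$ whenever $\alpha'$ is $E_r\overline{E}_r$-exact and $\beta$ is $E_r\overline{E}_r$-closed; second, that $\int_X\alpha\wedge\beta'=0$ whenever $\alpha$ is $d$-closed and $\beta'\in\mbox{Im}\,\partial+\mbox{Im}\,\bar\partial$.

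The second statement is immediate: writing $\beta'=\partial a+\bar\partial b$ and integrating by parts, $\int_X\alpha\wedge\partial a=\pm\int_X\partial\alpha\wedge a=0$ and $\int_X\alpha\wedge\bar\partial b=\pm\int_X\bar\partial\alpha\wedge b=0$, using only that $\alpha$ is $d$-closed. So the whole content of the proposition lies in the first statement, where both the exactness of $\alpha'$ and the closedness of $\beta$ must be exploited.

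For the first statement I would use the explicit towers of Definition \ref{Def:E_rE_r-bar}. Write $\alpha'=\partial\zeta+\partial\bar\partial\xi+\bar\partial\eta$ with $\zeta$ and $\eta$ satisfying the two exactness towers, and record that $E_r\overline{E}_r$-closedness of $\beta$ supplies towers $\partial\beta=\bar\partial\tilde\eta_1,\ \partial\tilde\eta_1=\bar\partial\tilde\eta_2,\dots$ and $\bar\partial\beta=\partial\tilde\rho_1,\ \bar\partial\tilde\rho_1=\partial\tilde\rho_2,\dots$. Splitting $\int_X\alpha'\wedge\beta$ into three pieces, the middle piece vanishes after two integrations by parts, since $\int_X\partial\bar\partial\xi\wedge\beta=\pm\int_X\bar\partial\xi\wedge\partial\beta=\pm\int_X\bar\partial\xi\wedge\bar\partial\tilde\eta_1=\pm\int_X\xi\wedge\bar\partial\bar\partial\tilde\eta_1=0$. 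For the piece $\int_X\bar\partial\eta\wedge\beta$ I would run a ``zigzag'' of integrations by parts that alternately feeds in one relation from $\eta$'s exactness tower and one from $\beta$'s $\bar\partial$-closedness tower: $\int_X\bar\partial\eta\wedge\beta=\pm\int_X\eta\wedge\bar\partial\beta=\pm\int_X\eta\wedge\partial\tilde\rho_1=\pm\int_X\partial\eta\wedge\tilde\rho_1=\pm\int_X\bar\partial u_{r-3}\wedge\tilde\rho_1=\dots$, the indices of the $u_j$ descending while those of the $\tilde\rho_j$ ascend, until the chain closes off at $\pm\int_X\partial u_0\wedge\tilde\rho_{r-1}=0$ because $\partial u_0=0$. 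The remaining piece $\int_X\partial\zeta\wedge\beta$ is handled by the conjugate zigzag, pairing $\zeta$'s exactness tower with $\beta$'s $\partial$-closedness tower and terminating at $\bar\partial v_0=0$.

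The only real obstacle is the bookkeeping of this zigzag: one must check that each wedge in the chain is of top bidegree so that Stokes applies, and, crucially, that the exactness tower of the primitive ($\eta$, resp. $\zeta$) and the relevant closedness tower of $\beta$ both have length $r-1$, so that the alternating integration by parts exhausts the two towers simultaneously and terminates exactly at the relation $\partial u_0=0$ (resp. $\bar\partial v_0=0$) rather than stalling one step short or running past. This length-matching is precisely what Definition \ref{Def:E_rE_r-bar} is engineered to provide. One could alternatively invoke Lemma \ref{Lem:E_rE_r-bar-properties}(iii) to write $\alpha'=d\gamma$ and compute $\pm\int_X\gamma\wedge d\beta$ with $d\beta=\bar\partial\tilde\eta_1+\partial\tilde\rho_1$, but this still requires the same tower bookkeeping on $\beta$, so the direct three-term splitting is the cleaner route.
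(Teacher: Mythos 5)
Your proposal is correct and follows essentially the same route as the paper's proof: the same split into two independence checks, the same three-term decomposition of the $E_r\overline{E}_r$-exact representative, and the same alternating (``ping-pong'') integrations by parts pairing the exactness towers of $\zeta$ and $\eta$ with the closedness towers of $\beta$, terminating at $\bar\partial v_0=0$ and $\partial u_0=0$ exactly as in the paper. The only cosmetic difference is the middle term, where you use $\partial\beta=\bar\partial\tilde\eta_1$ and $\bar\partial^2=0$ while the paper invokes $\partial\bar\partial$-closedness of $\beta$ directly; these are equivalent one-line observations.
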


\noindent {\it Proof.} The proof consists in a series of integrations by parts (mathematical ping-pong). 

\vspace{1ex}

$\bullet$ To prove independence of the choice of representative of the $E_r$-Bott-Chern class, let us modify a representative $\alpha$ to some representative $\alpha + \partial\zeta + \partial\bar\partial\xi + \bar\partial\eta$ of the same $E_r$-Bott-Chern class. This means that $\partial\zeta + \partial\bar\partial\xi + \bar\partial\eta$ is $E_r\overline{E}_r$-exact, so $\zeta$ and $\eta$ satisfy the towers of $r-1$ equations under (ii) of Definition \ref{Def:E_rE_r-bar}. We have \begin{eqnarray*}\int\limits_X(\alpha + \partial\zeta + \partial\bar\partial\xi + \bar\partial\eta)\wedge\beta & = & \int\limits_X\alpha\wedge\beta \pm\int\limits_X\zeta\wedge\partial\beta \pm \int\limits_X\xi\wedge\partial\bar\partial\beta \pm  \int\limits_X\eta\wedge\bar\partial\beta.\end{eqnarray*} 

Since $\beta$ is $E_r\overline{E}_r$-closed, it is also $\partial\bar\partial$-closed (see (i) of Lemma \ref{Lem:E_rE_r-bar-properties}), so the last but one integral above vanishes. 

Using the $r-1$ equations in the first tower under (i) of Definition \ref{Def:E_rE_r-bar} (with $\beta$ in place of $\alpha$) and the first tower under (ii) of the same definition, we get: \begin{eqnarray*}\int\limits_X\zeta\wedge\partial\beta & = & \int\limits_X\zeta\wedge\bar\partial\eta_1 = \pm\int\limits_X\bar\partial\zeta\wedge\eta_1 = \pm\int\limits_X\partial v_{r-3}\wedge\eta_1 = \pm\int\limits_X v_{r-3}\wedge\partial\eta_1 \\
 & = & \pm\int\limits_X v_{r-3}\wedge\bar\partial\eta_2 = \pm\int\limits_X \bar\partial v_{r-3}\wedge\eta_2 = \pm\int\limits_X\partial v_{r-4}\wedge\eta_2 = \pm\int\limits_X v_{r-4}\wedge\partial\eta_2  \\
 & \vdots & \\
 & = & \pm\int\limits_X v_0\wedge\bar\partial\eta_{r-1} = \pm\int\limits_X \bar\partial v_0\wedge\eta_{r-1} =0,\end{eqnarray*}

\noindent where the last identity follows from $\bar\partial v_0=0$. 

Playing the analogous mathematical ping-pong while using the second tower under both (i) and (ii) of Definition \ref{Def:E_rE_r-bar}, we get: \begin{eqnarray*}\int\limits_X\eta\wedge\bar\partial\beta & = & \int\limits_X\eta\wedge\partial\rho_1 = \pm\int\limits_X\partial\eta\wedge\rho_1 =\pm\int\limits_X\bar\partial u_{r-3}\wedge\rho_1 = \pm\int\limits_X u_{r-3}\wedge\bar\partial\rho_1 \\
 & = & \pm\int\limits_X u_{r-3}\wedge\partial\rho_2 = \pm\int\limits_X\partial u_{r-3}\wedge\rho_2 = \pm\int\limits_X\bar\partial u_{r-4}\wedge\rho_2 = \pm\int\limits_X u_{r-4}\wedge\bar\partial\rho_2   \\
 & \vdots & \\
 & = & \pm\int\limits_X u_0\wedge\partial\rho_{r-1} = \pm\int\limits_X \partial u_0\wedge\rho_{r-1} =0,\end{eqnarray*}     

 where the last identity follows from $\partial u_0=0$.

We conclude that $\int_X(\alpha + \partial\zeta + \partial\bar\partial\xi + \bar\partial\eta)\wedge\beta = \int_X\alpha\wedge\beta$.

\vspace{1ex}
 
$\bullet$ To prove independence of the choice of representative of the $E_r$-Aeppli class, let us modify a representative $\beta$ to some representative $\beta + \partial\zeta + \bar\partial\xi$ of the same $E_r$-Aeppli class. So, $\zeta$ and $\xi$ are arbitrary forms. We get: \begin{eqnarray*}\int\limits_X\alpha\wedge(\beta + \partial\zeta + \bar\partial\xi) & = & \int\limits_X\alpha\wedge\beta \pm \int\limits_X\partial\alpha\wedge\zeta \pm \int\limits_X\bar\partial\alpha\wedge\xi =0,\end{eqnarray*}  

\noindent where the last identity follows from $\partial\alpha=0$ and $\bar\partial\alpha=0$.   \hfill $\Box$

\vspace{2ex}

We now take up the issue of the non-degeneracy of the above bilinear pairing. For the sake of expediency, we start by defining the dual notion to the $E_r\overline{E}_r$-closedness of Definition \ref{Def:E_rE_r-bar} after we have fixed a metric.

\begin{Def}\label{Def:E_r_star_E_r-bar_star} Let $(X,\,\omega)$ be a compact complex Hermitian manifold. Fix an integer $r\geq 2$ and a bidegree $(p,\,q)$.

 We say that a form $\alpha\in C^\infty_{p,\,q}(X)$ is {\bf $E_r^\star\overline{E}_r^\star$-closed} with respect to the Hermitian metric $\omega$ if there exist smooth forms $a_1,\dots , a_{r-1}$ and $b_1,\dots , b_{r-1}$ such that the following two towers of $r-1$ equations are satisfied: \begin{align*} \partial^\star\alpha & = \bar\partial^\star a_1 &    \bar\partial^\star\alpha & = \partial^\star b_1 & \\
     \partial^\star a_1 & = \bar\partial^\star a_2 &   \bar\partial^\star b_1 & = \partial^\star b_2 & \\
     \vdots & & \\
     \partial^\star a_{r-2} & =  \bar\partial^\star a_{r-1},  &  \bar\partial^\star b_{r-2} & = \partial^\star b_{r-1}. & \end{align*}

\end{Def}

That this notion is indeed dual to the $E_r\overline{E}_r$-closedness via the Hodge star operator $\star=\star_\omega$ associated with the metric $\omega$ is the content of the following analogue of Corollary \ref{Cor:E_r_duality}.

\begin{Lem}\label{Lem:E_rE_r-bar_duality} In the setting of Definition \ref{Def:E_r_star_E_r-bar_star}, the following equivalence holds for every form $\alpha\in C^\infty_{p,\,q}(X)$: \\

\hspace{25ex} $\alpha$ is $E_r\overline{E}_r$-closed $\iff$ $\star\bar\alpha$ is $E_r^\star\overline{E}_r^\star$-closed.

\end{Lem}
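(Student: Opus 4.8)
The plan is to mimic, one tower at a time, the proof of Corollary~\ref{Cor:E_r_duality}, exploiting the fact that conjugation together with the Hodge star interchanges $\partial$ with $\bar\partial^\star$ and $\bar\partial$ with $\partial^\star$. Concretely, I will use the three identities already invoked in the proof of Claim~\ref{Claim:Hodge-star_harmonicity}, namely $\star\star=(-1)^{p+q}$ on $(p,\,q)$-forms, $\partial^\star=-\star\bar\partial\star$ and $\bar\partial^\star=-\star\partial\star$. Since conjugation and $\star$ are both bijective on smooth forms, every manipulation below is reversible, so it suffices to transform the two towers of Definition~\ref{Def:E_rE_r-bar}(i) for $\alpha$ into the two towers of Definition~\ref{Def:E_r_star_E_r-bar_star} for $\star\bar\alpha$; the equivalence of the two existence statements then yields both implications at once.

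First I would set $\gamma:=\star\bar\alpha\in C^\infty_{n-p,\,n-q}(X)$ and, for the forms $\eta_1,\dots,\eta_{r-1}$ and $\rho_1,\dots,\rho_{r-1}$ witnessing the $E_r\overline{E}_r$-closedness of $\alpha$, put $a_l:=\pm\star\bar\eta_l$ and $b_l:=\pm\star\bar\rho_l$. A quick bidegree count confirms these land in the types required by Definition~\ref{Def:E_r_star_E_r-bar_star}: since $\eta_l\in C^\infty_{p+l,\,q-l}(X)$ one gets $\star\bar\eta_l\in C^\infty_{n-p-l,\,n-q+l}(X)$, which is exactly the type demanded of $a_l$ relative to the base form $\gamma$ of type $(n-p,\,n-q)$, and similarly $\rho_l\in C^\infty_{p-l,\,q+l}(X)$ gives $\star\bar\rho_l\in C^\infty_{n-p+l,\,n-q-l}(X)$ for the $b_l$. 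Then I would process the first tower $\partial\alpha=\bar\partial\eta_1,\;\partial\eta_1=\bar\partial\eta_2,\dots$ by conjugating each equation (turning it into $\bar\partial\bar\alpha=\partial\bar\eta_1$, and so on) and applying $\star$. Using $\partial^\star=-\star\bar\partial\star$ and $\bar\partial^\star=-\star\partial\star$ together with $\star\star=(-1)^{p+q}$, the generic equation $\partial\eta_{l-1}=\bar\partial\eta_l$ becomes $\partial^\star a_{l-1}=\bar\partial^\star a_l$ (with the conventions $\eta_0:=\alpha$, $a_0:=\gamma$), which is precisely the first tower of Definition~\ref{Def:E_r_star_E_r-bar_star}.

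The second tower $\bar\partial\alpha=\partial\rho_1,\;\bar\partial\rho_1=\partial\rho_2,\dots$ is handled in exactly the same fashion: conjugation sends $\bar\partial\rho_{l-1}=\partial\rho_l$ to $\partial\bar\rho_{l-1}=\bar\partial\bar\rho_l$, and a single application of $\star$ together with the adjoint identities converts it into $\bar\partial^\star b_{l-1}=\partial^\star b_l$ (with $\rho_0:=\alpha$, $b_0:=\gamma$), the second tower of Definition~\ref{Def:E_r_star_E_r-bar_star}. Reading the two computations from right to left---which is legitimate because $\star$ and conjugation are bijections and each identity used is an equivalence---shows conversely that any witnesses $a_l,b_l$ for the $E_r^\star\overline{E}_r^\star$-closedness of $\star\bar\alpha$ produce witnesses $\eta_l,\rho_l$ for the $E_r\overline{E}_r$-closedness of $\alpha$, so the stated equivalence follows in both directions simultaneously.

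I expect the only delicate point to be the bookkeeping of signs: each application of $\star\star=(-1)^{p+q}$ and of the two adjoint formulas introduces a sign depending on the bidegree of the form being acted upon. As in Corollary~\ref{Cor:E_r_duality}, these signs are harmless: since Definition~\ref{Def:E_r_star_E_r-bar_star} only asserts the \emph{existence} of the auxiliary forms, any sign produced at step $l$ can be absorbed into the definition of $a_l$ (respectively $b_l$), and one checks that the resulting sign choices propagate consistently down each tower. Verifying this propagation, and confirming the bidegree matching at every rung as above, is the one place where care is needed; everything else is the formal ping-pong already rehearsed in the preceding corollaries.
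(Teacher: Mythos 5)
Your proposal is correct and takes essentially the same route as the paper's own proof: conjugate each equation in the two towers of Definition \ref{Def:E_rE_r-bar}(i), apply the Hodge star, and use $\partial^\star=-\star\bar\partial\star$, $\bar\partial^\star=-\star\partial\star$ and $\star\star=\pm\,\mbox{Id}$ to recognise the witnesses $a_j=\star\bar\eta_j$, $b_j=\star\bar\rho_j$ of Definition \ref{Def:E_r_star_E_r-bar_star}, with reversibility coming from conjugation and $\star$ being isomorphisms. The sign bookkeeping you flag as delicate is in fact automatic: all forms in a given tower share the same total degree $p+q$, so the sign from $\star\star$ appears identically on both sides of each equation and cancels, and no $\pm$ needs to be absorbed into the $a_l$ or $b_l$.
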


\noindent {\it Proof.} Thanks to conjugations, to the fact that $\star\star =\pm\, \mbox{Id}$ (with the sign depending on the parity of the total degree of the forms involved) and to $\star$ being an isomorphism, the two towers of $r-1$ equations that express the $E_r\overline{E}_r$-closedness of $\alpha$ (cf. (i) of Definition \ref{Def:E_rE_r-bar}) translate to \begin{align*} (-\star\bar\partial\star)(\star\bar\alpha) & = (-\star\partial\star)(\star\bar\eta_1) &    (-\star\partial\star)(\star\bar\alpha) & = (-\star\bar\partial\star)(\star\bar\rho_1) & \\
     (-\star\bar\partial\star)(\star\bar\eta_1) & = (-\star\partial\star)(\star\bar\eta_2) &   (-\star\partial\star)(\star\bar\rho_1) & = (-\star\bar\partial\star)(\star\bar\rho_2) & \\
     \vdots & & \\
     (-\star\bar\partial\star)(\star\bar\eta_{r-2}) & =  (-\star\partial\star)(\star\bar\eta_{r-1}),  &  (-\star\partial\star)(\star\bar\rho_{r-2}) & = (-\star\bar\partial\star)(\star\bar\rho_{r-1}). & \end{align*}

\noindent Now, put $a_j:=\star\bar\eta_j$ and $b_j:=\star\bar\rho_j$ for all $j\in\{1,\dots , r-1\} $. Since $-\star\bar\partial\star = \partial^\star$ and $-\star\partial\star = \bar\partial^\star$, these two towers amount to $\star\bar\alpha$ being $E_r^\star\overline{E}_r^\star$-closed. (See Definition \ref{Def:E_r_star_E_r-bar_star}).  \hfill $\Box$

\vspace{2ex}

We now come to two crucial lemmas from which Hodge isomorphisms for the $E_r$-Bott-Chern and the $E_r$-Aeppli cohomologies will follow. Based on the terminology introduced in (ii) of Definition \ref{Def:E_rE_r-bar}, we define the vector spaces: \begin{eqnarray*}{\cal F}^{p,\,q}_{\partial,\,r} &:= & \{\alpha\in C^\infty_{p,\,q}(X)\,\mid\,\partial\alpha\hspace{1ex}\mbox{runs at least r times}\}, \\
 {\cal F}^{p,\,q}_{\bar\partial,\,r} & := & \{\beta\in C^\infty_{p,\,q}(X)\,\mid\,\bar\partial\beta\hspace{1ex}\mbox{runs at least r times}\}\end{eqnarray*}

\noindent and their analogues ${\cal F}^{p,\,q}_{\partial^\star,\,r}$ and ${\cal F}^{p,\,q}_{\bar\partial^\star,\,r}$ when $\partial$ is replaced by $\partial^\star$ and $\bar\partial$ is replaced by $\bar\partial^\star$. Note that the space of $E_r^\star\overline{E}_r^\star$-closed $(p,\,q)$-forms defined in Definition \ref{Def:E_r_star_E_r-bar_star} is precisely the intersection ${\cal F}^{p,\,q}_{\partial^\star,\,r-1}\cap{\cal F}^{p,\,q}_{\bar\partial^\star,\,r-1}$.

\begin{Lem}\label{Lem:star-duality_1} Let $(X,\,\omega)$ be a compact complex Hermitian manifold. Fix an integer $r\geq 2$, a bidegree $(p,\,q)$ and a form $\alpha\in C^\infty_{p,\,q}(X)$.

  \vspace{1ex}

  The following two statements are {\bf equivalent}.

  \vspace{1ex}

  (i)\, $\alpha$ is $E_r^\star\overline{E}_r^\star$-closed (w.r.t. $\omega$);

  \vspace{1ex}

  (ii)\, $\alpha$ is $L^2_\omega$-orthogonal to the space of smooth $E_r\overline{E}_r$-exact $(p,\,q)$-forms.

\end{Lem}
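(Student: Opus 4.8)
The plan is to unwind both the $E_r^\star\overline{E}_r^\star$-closedness condition and the $E_r\overline{E}_r$-exactness condition into their explicit tower forms and show, via a single sequence of integrations by parts (the same ``mathematical ping-pong'' used in Proposition \ref{Prop:E_r_BC-A_duality1} and Corollary \ref{Cor:well-definedness_Er_duality}), that the $L^2_\omega$-pairing of an $E_r^\star\overline{E}_r^\star$-closed form against an $E_r\overline{E}_r$-exact form always vanishes, and conversely that this orthogonality forces the tower equations to hold. First I would spell out the relevant inner products: for $\alpha$ and an arbitrary $E_r\overline{E}_r$-exact form $\gamma = \partial\zeta + \partial\bar\partial\xi + \bar\partial\eta$ (with $\zeta,\eta$ satisfying the two towers under (iii) of Definition \ref{Def:E_rE_r-bar}), one has
\[
\langle\!\langle\alpha,\,\gamma\rangle\!\rangle_\omega = \langle\!\langle\partial^\star\alpha,\,\zeta\rangle\!\rangle_\omega + \langle\!\langle\partial\bar\partial)^\star\alpha,\,\xi\rangle\!\rangle_\omega + \langle\!\langle\bar\partial^\star\alpha,\,\eta\rangle\!\rangle_\omega,
\]
so the computation splits into three pieces governed by $\partial^\star\alpha$, $\bar\partial^\star\alpha$, and $(\partial\bar\partial)^\star\alpha = \bar\partial^\star\partial^\star\alpha$ respectively.

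For the direction (i) $\Rightarrow$ (ii), I would feed the tower $\partial^\star\alpha = \bar\partial^\star a_1,\ \partial^\star a_1 = \bar\partial^\star a_2,\ \dots$ against the dual tower $\bar\partial\zeta = \partial v_{r-3},\ \bar\partial v_{r-3} = \partial v_{r-4},\ \dots,\ \bar\partial v_0 = 0$ from Definition \ref{Def:E_rE_r-bar}(iii), shuttling the adjoints across the inner product one step at a time exactly as in the proof of Corollary \ref{Cor:well-definedness_Er_duality}; the chain terminates at $\langle\!\langle a_{r-2},\,\bar\partial v_0\rangle\!\rangle_\omega = 0$ because $\bar\partial v_0 = 0$. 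The symmetric argument handles $\langle\!\langle\bar\partial^\star\alpha,\,\eta\rangle\!\rangle_\omega$ using the $b_j$ tower against the $u_j$ tower, and the middle term vanishes because $E_r^\star\overline{E}_r^\star$-closedness (with $r\geq 2$) already gives $\bar\partial^\star\partial^\star\alpha = \bar\partial^\star(\bar\partial^\star a_1) = 0$, so $(\partial\bar\partial)^\star\alpha = 0$ and the pairing with $\xi$ is zero. This establishes orthogonality.

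The reverse implication (ii) $\Rightarrow$ (i) is where I expect the real work, since vanishing of all pairings must be translated back into the existence of the tower forms $a_j, b_j$. The clean way is Hodge-theoretic: orthogonality of $\alpha$ to the \emph{subspace} of $E_r\overline{E}_r$-exact forms means $\alpha$ lies in the $L^2_\omega$-orthogonal complement of that subspace, and I would identify that complement explicitly using the three-space decompositions of Proposition \ref{Prop:H_r_construction} together with the characterisations of $E_r$-exactness and $\overline{E}_r$-exactness in Proposition \ref{Prop:E_r-closed-exact_conditions}(iii). Concretely, by Lemma \ref{Lem:E_r-exact_E_r-bar-exact_sum} and the description of ${\cal C}^{p,\,q}_r = \partial({\cal E}^{p,\,q}_{\bar\partial,\,r-1}) + \mbox{Im}\,\bar\partial$, orthogonality to all of $\partial\zeta$ with $\zeta\in{\cal E}^{p,\,q}_{\bar\partial,\,r-1}$ should be dualised — via the identities $\partial^\star = -\star\bar\partial\star$, $\bar\partial^\star = -\star\partial\star$ — into precisely the $E_r^\star\overline{E}_r^\star$ tower equations, the cleanest route being to invoke Lemma \ref{Lem:E_rE_r-bar_duality} to transport the problem to $\star\bar\alpha$ and recognise the orthogonality as $\star\bar\alpha$ being $E_r\overline{E}_r$-closed.

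The main obstacle will be managing the interplay between the two separate towers (the $\partial$-chain and the $\bar\partial$-chain) without their bookkeeping interfering, and in particular verifying that orthogonality to the \emph{full} space $\mbox{Im}\,\partial + \mbox{Im}\,\bar\partial$ plus the deeper exact pieces produces the intermediate forms $a_j, b_j$ with the correct bidegrees $(p-j,\,q+j)$ rather than merely producing a weaker closedness statement; I would address this by treating the two chains independently, since the defining equations of $E_r\overline{E}_r$-exactness impose no cross-conditions between $\zeta$ and $\eta$, and by using the orthogonal decompositions of Proposition \ref{Prop:H_r_construction} to solve the tower equations one level at a time.
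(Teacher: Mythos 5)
Your direction (i) $\Rightarrow$ (ii) is correct and coincides with the paper's own argument: split $\langle\langle\alpha,\beta\rangle\rangle$ into the three terms governed by $\partial^\star\alpha$, $\bar\partial^\star\partial^\star\alpha$ and $\bar\partial^\star\alpha$, kill the middle one via $\bar\partial^\star\partial^\star\alpha=\bar\partial^\star\bar\partial^\star a_1=0$, and run the ping-pong between the $a_j$-tower (resp.\ $b_j$-tower) and the $v_j$-tower (resp.\ $u_j$-tower). (The chain ends at $\langle\langle a_{r-1},\,\bar\partial v_0\rangle\rangle$, not $\langle\langle a_{r-2},\,\bar\partial v_0\rangle\rangle$, but that is only an indexing slip.)

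The reverse direction is where your proposal has a genuine gap. The route you single out as cleanest --- transport the problem to $\star\bar\alpha$ via Lemma \ref{Lem:E_rE_r-bar_duality} and ``recognise the orthogonality as $\star\bar\alpha$ being $E_r\overline{E}_r$-closed'' --- is circular. After the transport, your hypothesis reads: $\int_X\beta\wedge(\star\bar\alpha)=0$ for every $E_r\overline{E}_r$-exact $(p,\,q)$-form $\beta$; your desired conclusion is the \emph{existence} of the two towers of forms witnessing $E_r\overline{E}_r$-closedness of $\star\bar\alpha$. Passing from the first (a pairing condition against a family of test forms) to the second (an existential, structural condition) is precisely the content of the lemma you are proving; no result established earlier in the paper performs this step, and the non-degeneracy statement (Theorem \ref{The:E_r_BC-A_duality}) that would legitimise such a ``recognition'' is itself a consequence of this lemma. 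The contrast with $r=1$ is instructive: there the exact forms fill out the full images of fixed operators ($\mbox{Im}\,\partial+\mbox{Im}\,\bar\partial$, resp.\ $\mbox{Im}\,\partial\bar\partial$), so orthogonality literally means lying in kernels of adjoints. For $r\geq 2$ the components $\zeta,\eta$ of an $E_r\overline{E}_r$-exact form are constrained to the spaces ${\cal E}_{\bar\partial,\,r-1}$ and ${\cal E}_{\partial,\,r-1}$, so orthogonality only yields $\bar\partial^\star\partial^\star\alpha=0$, $\partial^\star\alpha\perp{\cal E}_{\bar\partial,\,r-1}$ and $\bar\partial^\star\alpha\perp{\cal E}_{\partial,\,r-1}$, and converting these perpendicularity conditions into towers is real work.

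Your fallback remark (``solve the tower equations one level at a time'' using orthogonal decompositions) does point at the paper's actual strategy, but the tool you cite, Proposition \ref{Prop:H_r_construction}, is not the one that makes the induction run; the paper uses the decomposition $C^\infty_{p,\,q}(X)={\cal H}_r^{p,\,q}\oplus\big(\mbox{Im}\,\bar\partial+\partial({\cal E}_{\bar\partial,\,r-1})\big)\oplus\big(\partial^\star({\cal E}_{\bar\partial^\star,\,r-1})+\mbox{Im}\,\bar\partial^\star\big)$ of Proposition \ref{Prop:appendix_3-space_decomp_E-F} in the Appendix, together with the inclusions of the type ${\cal E}_{\bar\partial,\,r-2}\cap{\cal F}_{\partial,\,1}\supset{\cal H}_2\oplus\big(\mbox{Im}\,\bar\partial+\partial({\cal E}_{\bar\partial,\,1})\big)$. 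Moreover, the induction is not a clean level-by-level solution: the first step gives $\partial^\star\alpha=\bar\partial^\star a_1$ (from $\partial^\star\alpha\perp\ker\Delta''$ and $\partial^\star\alpha\in\ker\bar\partial^\star$), but each subsequent step only gives $\partial^\star a_j=\partial^\star b_j+\bar\partial^\star a_{j+1}$ with an error term $b_j\in{\cal E}_{\bar\partial^\star,\,j}$ carrying its own auxiliary tower, and these errors must be absorbed into the earlier $a_i$'s to produce an honest tower for $\alpha$. This correction scheme is the missing idea in your proposal; without it the induction does not close.
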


\noindent {\it Proof.} ``(i)$\implies$(ii)''\, Suppose that $\alpha$ is $E_r^\star\overline{E}_r^\star$-closed. This means that $\alpha$ satisfies the two towers of $(r-1)$ equations in Definition \ref{Def:E_r_star_E_r-bar_star}. Let $\beta=\partial\zeta + \partial\bar\partial\xi + \bar\partial\eta$ be an arbitrary $E_r\overline{E}_r$-exact $(p,\,q)$-form. So, $\zeta$ and $\eta$ satisfy the respective towers of $r-1$ equations under (ii) of Definition \ref{Def:E_rE_r-bar}. For the $L^2_\omega$-inner product of $\alpha$ and $\beta$, we get: \begin{eqnarray}\label{eqn:alpha-beta_1}\langle\langle\alpha,\,\beta\rangle\rangle = \langle\langle\partial^\star\alpha,\,\zeta\rangle\rangle + \langle\langle\bar\partial^\star\partial^\star\alpha,\,\xi\rangle\rangle + \langle\langle\bar\partial^\star\alpha,\,\eta\rangle\rangle.\end{eqnarray}

Since $\bar\partial^\star\partial^\star\alpha = \bar\partial^\star\bar\partial^\star a_1=0$, the middle term on the r.h.s. of (\ref{eqn:alpha-beta_1}) vanishes.

For the first term on the r.h.s. of (\ref{eqn:alpha-beta_1}), we use the towers of equations satisfied by $\alpha$ and $\zeta$ to get: \begin{eqnarray*}\langle\langle\partial^\star\alpha,\,\zeta\rangle\rangle & = & \langle\langle\bar\partial^\star a_1,\,\zeta\rangle\rangle = \langle\langle a_1,\,\bar\partial\zeta\rangle\rangle = \langle\langle a_1,\,\partial v_{r-3}\rangle\rangle =  \langle\langle \partial^\star a_1,\, v_{r-3}\rangle\rangle = \langle\langle \bar\partial^\star a_2,\, v_{r-3}\rangle\rangle \\
   & = & \langle\langle a_2,\, \bar\partial v_{r-3}\rangle\rangle = \langle\langle a_2,\, \partial v_{r-4}\rangle\rangle  \\
   & \vdots &  \\
   & = & \langle\langle a_{r-1},\, \bar\partial v_0\rangle\rangle =0,\end{eqnarray*}

\noindent where the last identity followed from the property $\bar\partial v_0=0$.

For the last term on the r.h.s. of (\ref{eqn:alpha-beta_1}), we use the towers of equations satisfied by $\alpha$ and $\eta$ to get: \begin{eqnarray*}\langle\langle\bar\partial^\star\alpha,\,\eta\rangle\rangle & = & \langle\langle\partial^\star b_1,\,\eta\rangle\rangle = \langle\langle b_1,\,\partial\eta\rangle\rangle = \langle\langle b_1,\,\bar\partial u_{r-3}\rangle\rangle =  \langle\langle \bar\partial^\star b_1,\, u_{r-3}\rangle\rangle = \langle\langle \partial^\star b_2,\, u_{r-3}\rangle\rangle \\
   & = & \langle\langle b_2,\, \partial u_{r-3}\rangle\rangle = \langle\langle b_2,\, \bar\partial u_{r-4}\rangle\rangle  \\
   & \vdots &  \\
   & = & \langle\langle b_{r-1},\, \partial u_0\rangle\rangle =0,\end{eqnarray*}

\noindent where the last identity followed from the property $\partial u_0=0$.

``(ii)$\implies$(i)''\, Suppose that $\alpha$ is orthogonal to all the smooth $E_r\overline{E}_r$-exact $(p,\,q)$-forms $\beta$. These forms are of the shape $\beta = \partial\zeta + \partial\bar\partial\xi + \bar\partial\eta$, where $\xi$ is subject to no condition, while $\zeta\in{\cal E}_{\bar\partial,\,r-1}^{p-1,\,q}$ and $\eta\in{\cal E}_{\partial,\,r-1}^{p,\,q-1}$. (See notation introduced in the proof of Lemma \ref{Lem:E_r-exact_E_r-bar-exact_sum}).

 The orthogonality condition is equivalent to the following three identities: \[(a)\, \langle\langle \bar\partial^\star\partial^\star\alpha,\, \xi\rangle\rangle =0,  \hspace{3ex} (b)\, \langle\langle\partial^\star\alpha,\, \zeta\rangle\rangle =0,  \hspace{3ex} (c)\, \langle\langle\bar\partial^\star\alpha,\, \eta\rangle\rangle =0\]

\noindent holding for all the forms $\zeta, \xi, \eta$ satisfying the above conditions.

Since $\xi$ is subject to no condition, (a) amounts to $\bar\partial^\star\partial^\star\alpha=0$. This means that $\partial^\star\alpha\in\ker\bar\partial^\star$ and $\bar\partial^\star\alpha\ker\partial^\star$. Condition (b) requires $\partial^\star\alpha\perp{\cal E}_{\bar\partial,\,r-1}^{p-1,\,q}$, while (c) requires $\bar\partial^\star\alpha\perp{\cal E}_{\partial,\,r-1}^{p,\,q-1}$.

\vspace{1ex}

$\bullet$ {\it Unravelling condition (b).} The forms $\zeta\in{\cal E}_{\bar\partial,\,r-1}^{p-1,\,q}$ are characterised by the existence of forms $v_{r-3},\dots , v_0$ satisfying the first tower of $(r-1)$ equations in (iii) of Definition \ref{Def:E_rE_r-bar}. That tower imposes the condition $v_{r-j}\in{\cal E}_{\bar\partial,\,r-j+1}\cap{\cal F}_{\partial,\,j-2}$ for every $j\in\{3,\dots , j\}$. (We have dropped the superscripts to lighten the notation.)

Now, every form $\zeta\in\ker\Delta''$ satisfies the condition $\bar\partial\zeta=0$, hence $\zeta\in{\cal E}_{\bar\partial,\,1}\subset{\cal E}_{\bar\partial,\,r-1}$. From condition (b), we get $\partial^\star\alpha\perp\ker\Delta''$. Since $\ker\bar\partial^\star$ (to which $\partial^\star\alpha$ belongs by condition (a)) is the orthogonal direct sum between $\ker\Delta''$ and $\mbox{Im}\,\bar\partial^\star$, we get $\partial^\star\alpha\in\mbox{Im}\,\bar\partial^\star$, so

\begin{equation} 
\label{eqn:proof-ortho1} \partial^\star\alpha=\bar\partial^\star a_1
\end{equation}

\noindent for some form $a_1$. Condition (b) becomes:\[0=\langle\langle \partial^\star\alpha,\, \zeta\rangle\rangle = \langle\langle a_1,\, \bar\partial\zeta\rangle\rangle = \langle\langle a_1,\, \partial v_{r-3}\rangle\rangle = \langle\langle\partial^\star a_1,\, v_{r-3}\rangle\rangle \hspace{2ex} \mbox{for all} \hspace{2ex} v_{r-3}\in{\cal E}_{\bar\partial,\,r-2}\cap{\cal F}_{\partial,\,1}.\]

\noindent In other words, $\partial^\star a_1\perp({\cal E}_{\bar\partial,\,r-2}\cap{\cal F}_{\partial,\,1})$.

We will now use the $3$-space decomposition (\ref{eqn:appendix_4-space_decomp_E-F}) of $C^\infty_{p,\,q}(X)$ for the case $r=2$. (See Proposition \ref{Prop:appendix_3-space_decomp_E-F} in Appendix one.) It is immediate to check the inclusion ${\cal E}_{\bar\partial,\,r-2}\cap{\cal F}_{\partial,\,1}\supset{\cal H}_2\oplus(\mbox{Im}\,\bar\partial + \partial({\cal E}_{\bar\partial,\,1}))$. Therefore, condition (b) implies that $\partial^\star a_1\perp({\cal H}_2\oplus(\mbox{Im}\,\bar\partial + \partial({\cal E}_{\bar\partial,\,1})))$. Since the orthogonal complement of ${\cal H}_2\oplus(\mbox{Im}\,\bar\partial + \partial({\cal E}_{\bar\partial,\,1}))$ is $\partial^\star({\cal E}_{\bar\partial^\star,\,1}) + \mbox{Im}\,\bar\partial^\star$ by the $3$-space decomposition (\ref{eqn:appendix_4-space_decomp_E-F}) for $r=2$, we infer that $\partial^\star a_1\in\partial^\star({\cal E}_{\bar\partial^\star,\,1}) + \mbox{Im}\,\bar\partial^\star$. Therefore, there exist forms $b_1\in\ker\bar\partial^\star$ and $a_2$ such that \begin{equation}
\label{eqn:proof-ortho2}\partial^\star a_1 = \partial^\star b_1 + \bar\partial^\star a_2.
\end{equation}

\noindent Since $\bar\partial^\star b_1=0$, equations (\ref{eqn:proof-ortho1}) and (\ref{eqn:proof-ortho2}) yield: \begin{equation}\label{eqn:proof_ortho3}\begin{array}{rcl} \partial^\star\alpha&=&\bar\partial^\star(a_1-b_1)  \\
\partial^\star(a_1 - b_1)&=&\bar\partial^\star a_2.\end{array} \end{equation}

Thus, condition  (b) becomes: \begin{eqnarray*} 0 & = &\langle\langle \partial^\star\alpha,\, \zeta\rangle\rangle = \langle\langle\bar\partial^\star(a_1-b_1),\, \zeta\rangle\rangle = \langle\langle a_1-b_1,\, \partial v_{r-3}\rangle\rangle = \langle\langle\partial^\star(a_1-b_1),\, v_{r-3}\rangle\rangle \\
   & = & \langle\langle\bar\partial^\star a_2,\, v_{r-3}\rangle\rangle = \langle\langle a_2,\, \partial v_{r-4}\rangle\rangle = \langle\langle \partial^\star a_2,\, v_{r-4}\rangle\rangle \hspace{2ex} \mbox{for all} \hspace{2ex} v_{r-4}\in{\cal E}_{\bar\partial,\,r-3}\cap{\cal F}_{\partial,\,2}.\end{eqnarray*}

\noindent In other words, $\partial^\star a_2\perp({\cal E}_{\bar\partial,\,r-3}\cap{\cal F}_{\partial,\,2})$.

Now, it is immediate to check the inclusion ${\cal E}_{\bar\partial,\,r-3}\cap{\cal F}_{\partial,\,2}\supset{\cal H}_3\oplus(\mbox{Im}\,\bar\partial + \partial({\cal E}_{\bar\partial,\,2}))$. Since the orthogonal complement of ${\cal H}_3\oplus(\mbox{Im}\,\bar\partial + \partial({\cal E}_{\bar\partial,\,2}))$ is $\partial^\star({\cal E}_{\bar\partial^\star,\,2}) + \mbox{Im}\,\bar\partial^\star$ by the $3$-space decomposition (\ref{eqn:appendix_4-space_decomp_E-F}) for $r=3$, we infer that $\partial^\star a_2\in\partial^\star({\cal E}_{\bar\partial^\star,\,1}) + \mbox{Im}\,\bar\partial^\star$. Therefore, there exist forms $b_2\in{\cal E}_{\bar\partial^\star,\, 2}$ and $a_3$ such that 
\begin{equation}\label{eqn:proof-ortho4}\partial^\star a_2 = \partial^\star b_2 + \bar\partial^\star a_3.
\end{equation}

\noindent Since the condition $b_2\in{\cal E}_{\bar\partial^\star,\, 2}$ translates to the equations \begin{equation}
\bar\partial^\star b_2 = \partial^\star c_1  \hspace{3ex}  \mbox{and} \hspace{3ex} \bar\partial^\star c_1=0,
\end{equation}

\noindent for some form $c_1$, equations (\ref{eqn:proof_ortho3}) and (\ref{eqn:proof-ortho4}) yield: \begin{eqnarray*}\label{eqn:proof_ortho5} & & \partial^\star\alpha = \bar\partial^\star(a_1-b_1-c_1)  \\
  & & \partial^\star(a_1-b_1-c_1) = \bar\partial^\star(a_2-b_2) \\
  & & \partial^\star(a_2-b_2) = \bar\partial^\star a_3. \end{eqnarray*}

Continuing in this way, we inductively get the following tower of $(r-1)$ equations: \begin{equation}\label{eqn:proof_ortho6}\begin{array}{rcl}\partial^\star\alpha&=&\bar\partial^\star(a_1-b_1-c_1-c_1^{(3)}-\dots - c_1^{(r-2)})  \\
 \partial^\star(a_1-b_1-c_1-c_1^{(3)}-\dots - c_1^{(r-2)})&=&\bar\partial^\star(a_2-b_2-c_2^{(3)}-\dots -c_2^{(r-2)}) \\
 &\vdots& \\
  \partial^\star(a_{r-2}-b_{r-2})&=& \bar\partial^\star a_{r-1},\end{array}\end{equation}

\noindent where $b_j\in{\cal E}_{\bar\partial^\star,\, j}$ for all $j\in\{1,\dots , r-2\}$, so $b_j$ satisfies the following tower of $j$ equations: \begin{equation*}\begin{array}{rcl}\bar\partial^\star b_j &=& \partial^\star c_{j-1}^{(j)} \\
\bar\partial^\star c_{j-1}^{(j)} &=& \partial^\star c_{j-2}^{(j)} \\
 &\vdots& \\
\bar\partial^\star c_2^{(j)} &=& \partial^\star c_1^{(j)} \\
\bar\partial^\star c_1^{(j)} &=& 0, \end{array}\end{equation*}

\noindent for some forms $c_l^{(j)}$.

Consequently, conditions (a) and (b) to which $\alpha$ is subject imply that $\alpha\in{\cal F}_{\partial^\star,\,r-1}$ (cf. tower (\ref{eqn:proof_ortho6})), which is the first of the two conditions required for $\alpha$ to be $E_r^\star\overline{E}_r^\star$-closed under Definition \ref{Def:E_r_star_E_r-bar_star}.

\vspace{1ex}

$\bullet$ {\it Unravelling condition (c).} Proceeding in a similar fashion, with $\partial^\star$ and $\bar\partial^\star$ permuted, we infer that conditions (a) and (c) to which $\alpha$ is subject imply that $\alpha\in{\cal F}_{\bar\partial^\star,\,r-1}$, which is the second of the two conditions required for $\alpha$ to be $E_r^\star\overline{E}_r^\star$-closed under Definition \ref{Def:E_r_star_E_r-bar_star}. 

$\bullet$ We conclude that $\alpha$ is indeed $E_r^\star\overline{E}_r^\star$-closed.  \hfill $\Box$

\vspace{2ex}

The immediate consequence of Lemma \ref{Lem:star-duality_1} is the following {\it Hodge isomorphism} for the $E_r$-Bott-Chern cohomology.

\begin{Cor-Def}\label{Cor-Def:E_r-BC_Hodge-isom}Let $(X,\,\omega)$ be a compact complex Hermitian manifold. For every bidegree $(p,\,q)$ and every $r\in\N_{>0}$, every $E_r$-Bott-Chern cohomology class $\{\alpha\}_{E_{r,\,BC}}\in E_{r,\,BC}^{p,\,q}(X)$ can be represented by a unique form $\alpha\in C^\infty_{p,\,q}(X)$ satisfying the following three conditions: \\

  \hspace{20ex} $\alpha$ is $\partial$-closed, $\bar\partial$-closed and $E_r^\star\overline{E}_r^\star$-closed.

  \vspace{2ex}

  Any such form $\alpha$ is called {\bf $E_r$-Bott-Chern harmonic} with respect to the metric $\omega$.

  There is a vector-space {\bf isomorphism} depending on the metric $\omega$:

  \[E_{r,\,BC}^{p,\,q}(X)\simeq{\cal H}_{r,\,BC}^{p,\,q}(X),\]

  \noindent where ${\cal H}_{r,\,BC}^{p,\,q}(X)\subset C^\infty_{p,\,q}(X)$ is the space of $E_r$-Bott-Chern harmonic $(p,\,q)$-forms associated with $\omega$.

\end{Cor-Def}

Of course, the above isomorphism maps every class $\{\alpha\}_{E_{r,\,BC}}\in E_{r,\,BC}^{p,\,q}(X)$ to its unique $E_r$-Bott-Chern harmonic representative.

\vspace{3ex}

The analogous statement for the $E_r$-Aeppli cohomology follows at once from standard material. Indeed, it is classical that the $L^2_\omega$-orthogonal complement of $\mbox{Im}\,\partial$ (resp. $\mbox{Im}\,\bar\partial$) in $C^\infty_{p,\,q}(X)$ is $\ker\partial^\star$ (resp. $\ker\bar\partial^\star$). The immediate consequence of this is the following {\it Hodge isomorphism} for the $E_r$-Aeppli cohomology.

\begin{Cor-Def}\label{Cor-Def:E_r-A_Hodge-isom}Let $(X,\,\omega)$ be a compact complex Hermitian manifold. For every bidegree $(p,\,q)$, every $E_r$-Aeppli cohomology class $\{\alpha\}_{E_{2,\,A}}\in E_{r,\,A}^{p,\,q}(X)$ can be represented by a unique form $\alpha\in C^\infty_{p,\,q}(X)$ satisfying the following three conditions: \\

  \hspace{20ex} $\alpha$ is $E_r\overline{E}_r$-closed, $\partial^\star$-closed and $\bar\partial^\star$-closed.

  \vspace{2ex}

  Any such form $\alpha$ is called {\bf $E_r$-Aeppli harmonic} with respect to the metric $\omega$.

  There is a vector-space {\bf isomorphism} depending on the metric $\omega$:

  \[E_{r,\,A}^{p,\,q}(X)\simeq{\cal H}_{r,\,A}^{p,\,q}(X),\]

  \noindent where ${\cal H}_{r,\,A}^{p,\,q}(X)\subset C^\infty_{p,\,q}(X)$ is the space of $E_r$-Aeppli harmonic $(p,\,q)$-forms associated with $\omega$.

\end{Cor-Def}

Of course, the above isomorphism maps every class $\{\alpha\}_{E_{r,\,A}}\in E_{r,\,A}^{p,\,q}(X)$ to its unique $E_r$-Aeppli harmonic representative.

\vspace{3ex}

We can now conclude from the above results that there is a Serre-type {\bf canonical duality} between the $E_r$-Bott-Chern cohomology and the $E_r$-Aeppli cohomology of complementary bidegrees.

\begin{The}\label{The:E_r_BC-A_duality} Let $X$ be a compact complex manifold with $\mbox{dim}_\C X=n$. For all $p,q\in\{0,\dots , n\}$, the following bilinear pairing is {\bf well defined} and {\bf non-degenerate}: \[E_{r,\,BC}^{p,\,q}(X)\times E_{r,\,A}^{n-p,\,n-q}(X)\longrightarrow\C, \hspace{3ex} \bigg(\{\alpha\}_{E_{r,\,BC}},\,\{\beta\}_{E_{r,\,A}}\bigg)\mapsto\int\limits_X\alpha\wedge\beta.\]

\end{The}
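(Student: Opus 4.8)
The plan is to separate well-definedness from non-degeneracy, since the former is already in hand: the well-definedness of the pairing was established in Proposition \ref{Prop:E_r_BC-A_duality1}, so it remains only to prove non-degeneracy. For this I would imitate, at the level of harmonic representatives, the strategy used for the Fr\"olicher pages in the proof of Corollary \ref{Cor:Er_duality}: fix a Hermitian metric $\omega$ and exhibit, for each non-zero $E_r$-Bott-Chern class, an $E_r$-Aeppli class against which it pairs to a strictly positive number. The key structural observation is that the conjugate-Hodge-star operator $\alpha\mapsto\star\bar\alpha$ should interchange the two harmonic theories of complementary bidegree furnished by the $E_r$-Bott-Chern Hodge isomorphism \ref{Cor-Def:E_r-BC_Hodge-isom} and the $E_r$-Aeppli Hodge isomorphism \ref{Cor-Def:E_r-A_Hodge-isom}.

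Concretely, let $\{\alpha\}_{E_{r,\,BC}}\in E_{r,\,BC}^{p,\,q}(X)\setminus\{0\}$ and let $\alpha$ be its unique $E_r$-Bott-Chern harmonic representative, so that $\alpha\neq 0$ and $\alpha$ is $\partial$-closed, $\bar\partial$-closed and $E_r^\star\overline{E}_r^\star$-closed. I would then check that $\star\bar\alpha\in C^\infty_{n-p,\,n-q}(X)$ is $E_r$-Aeppli harmonic by verifying the three defining conditions of \ref{Cor-Def:E_r-A_Hodge-isom}. Using the standard identities $\partial^\star=-\star\bar\partial\star$, $\bar\partial^\star=-\star\partial\star$, $\star\star=\pm\operatorname{Id}$ and $\overline{\star\gamma}=\star\bar\gamma$, the $\partial$- and $\bar\partial$-closedness of $\alpha$ translate respectively into the $\partial^\star$- and $\bar\partial^\star$-closedness of $\star\bar\alpha$; meanwhile, applying Lemma \ref{Lem:E_rE_r-bar_duality} to the form $\star\bar\alpha$ and using $\star\,\overline{\star\bar\alpha}=\pm\alpha$ shows that $\star\bar\alpha$ is $E_r\overline{E}_r$-closed precisely because $\alpha$ is $E_r^\star\overline{E}_r^\star$-closed. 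Thus $\star\bar\alpha$ represents a class in $E_{r,\,A}^{n-p,\,n-q}(X)$, and it is non-zero since $\star$ and conjugation are isomorphisms.

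Evaluating the pairing on this pair then gives $\int_X\alpha\wedge\star\bar\alpha=||\alpha||^2_{L^2_\omega}>0$, so the functional $(\{\alpha\}_{E_{r,\,BC}},\,\cdot\,)$ does not vanish identically; as the non-zero Bott-Chern class was arbitrary, the pairing is non-degenerate in its first argument. The second argument is treated symmetrically: starting from a non-zero $E_r$-Aeppli class with harmonic representative $\beta$ (which is $E_r\overline{E}_r$-closed, $\partial^\star$-closed and $\bar\partial^\star$-closed), the same identities together with Lemma \ref{Lem:E_rE_r-bar_duality} show that $\star\bar\beta$ is $E_r$-Bott-Chern harmonic, and the pairing returns $\pm||\star\bar\beta||^2_{L^2_\omega}\neq 0$.

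I expect the only delicate point to be the bookkeeping in the second paragraph, namely confirming that $\alpha\mapsto\star\bar\alpha$ carries the three conditions cutting out ${\cal H}_{r,\,BC}^{p,\,q}(X)$ exactly onto those cutting out ${\cal H}_{r,\,A}^{n-p,\,n-q}(X)$, with all signs and bidegrees matching. The interchange of the pointwise conditions ($\partial,\bar\partial$ versus $\partial^\star,\bar\partial^\star$) is routine Hodge theory; the genuine content is the interchange of the $E_r^\star\overline{E}_r^\star$-closedness and $E_r\overline{E}_r$-closedness conditions, which is exactly Lemma \ref{Lem:E_rE_r-bar_duality}. Once that correspondence is established, the remainder is the one-line harmonic computation already familiar from Corollary \ref{Cor:Er_duality}.
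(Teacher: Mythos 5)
Your proposal is correct and follows essentially the same route as the paper: well-definedness is quoted from Proposition \ref{Prop:E_r_BC-A_duality1}, and non-degeneracy is obtained by taking the unique $E_r$-Bott-Chern (resp. $E_r$-Aeppli) harmonic representative of a non-zero class via Corollaries and Definitions \ref{Cor-Def:E_r-BC_Hodge-isom} and \ref{Cor-Def:E_r-A_Hodge-isom}, showing via Lemma \ref{Lem:E_rE_r-bar_duality} and the standard identities that $\star\bar\alpha$ is harmonic for the dual theory in the complementary bidegree, and pairing to $\pm\|\alpha\|^2_{L^2_\omega}\neq 0$. The bookkeeping point you flag (the interchange of the closedness conditions under $\alpha\mapsto\star\bar\alpha$) is handled exactly as you describe.
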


\noindent {\it Proof.} The well-definedness was proved in Proposition \ref{Prop:E_r_BC-A_duality1}. The non-degeneracy is proved in the usual way on the back of the above preliminary results, as follows. 

Let $\{\alpha\}_{E_{r,\,BC}}\in E_{r,\,BC}^{p,\,q}(X)$ be an arbitrary non-zero class. Fix an arbitrary Hermitian metric $\omega$ on $X$ and let $\alpha$ be the unique {\it $E_r$-Bott-Chern harmonic} representative (w.r.t. $\omega$) of the class $\{\alpha\}_{E_{r,\,BC}}$ (whose existence and uniqueness are guaranteed by Corollary and Definition \ref{Cor-Def:E_r-BC_Hodge-isom}). In particular, $\alpha\neq 0$. 

Based on the characterisations of the $E_r$-Bott-Chern and $E_r$-Aeppli harmonicities given in Corollaries and Definitions \ref{Cor-Def:E_r-BC_Hodge-isom} and \ref{Cor-Def:E_r-A_Hodge-isom}, Lemma \ref{Lem:E_rE_r-bar_duality} and the standard equivalences ($\alpha\in\ker\partial\iff\star\bar\alpha\in\ker\partial^\star$) and ($\alpha\in\ker\bar\partial\iff\star\bar\alpha\in\ker\bar\partial^\star$) ensure that $\star\bar\alpha$ is {\it $E_r$-Aeppli harmonic}. In particular, $\star\bar\alpha$ represents an $E_r$-Aeppli class $\{\star\bar\alpha\}_{E_{r,\,A}}\in E_{r,\,A}^{n-p,\,n-q}(X)$. Moreover, pairing $\{\alpha\}_{E_{r,\,BC}}$ with $\{\star\bar\alpha\}_{E_{r,\,A}}$ yields $\int_X\alpha\wedge\star\bar\alpha = ||\alpha||^2\neq 0$, where $||\,\,||$ stands for the $L^2_\omega$-norm.   

Similarly, starting off with a non-zero class $\{\beta\}_{E_{r,\,A}}\in E_{r,\,A}^{n-p,\,n-q}(X)$ and selecting its unique {\it $E_r$-Aeppli harmonic} representative $\beta$, we get that $\beta\neq 0$, $\star\bar\beta$ is {\it $E_r$-Bott-Chern harmonic} (hence it represents a class in $E_{r,\,BC}^{p,\,q}(X)$) and the classes $\{\star\bar\beta\}_{E_{r,\,BC}}$ and $\{\beta\}_{E_{r,\,A}}$ pair to $\pm\,||\beta||^2\neq 0$.  \hfill $\Box$

\section{Characterisations of page-$r$-$\partial\bar\partial$-manifolds}\label{section:Characterisations in terms of exactness properties}

In this section, we apply the higher-page Bott-Chern and Aeppli cohomologies to give various characterisations of the page-$r$-$\partial\bar\partial$-manifolds introduced in [PSU20]. We will write $E_{r,\, BC}(X):=\bigoplus_{p,q} E_{r,\, BC}^{p,\, q}(X)$, $E_{r,\, BC}^k(X):=\bigoplus_{p+q=k} E_{r,\, BC}^{p,\, q}(X)$, $e_{r,\, BC}^k(X):=\dim E_{r,\, BC}^k(X)$ and similarly for $E_{r,\, A}$.

\begin{The}\label{The: page-r-Characterisations}
For a compact complex manifold $X$, the following properties are equivalent:
\begin{enumerate}
\item[(A)] $X$ is a \textbf{page-$(r-1)$-$\partial\bar\partial$-manifold}.
\item[(B)] The map $E_{r,\, BC}(X)\rightarrow E_{r,\, A}(X)$ induced by the identity is an {\bf isomorphism}.
\item[(C)] One has $e_{r,\, BC}^k(X)=e_{r,\, A}^k(X)$ for all $k$.
\item[(D)] The map $E_{r,\, BC}(X)\rightarrow E_{r,\, A}(X)$ induced by the identity is {\bf injective}.
\item[(E)] For any $d$-closed $(p,q)$-form $\alpha$, the following properties are equivalent:
\[
\alpha\text{ is } d\text{-exact}\Longleftrightarrow\alpha\text{ is } E_r\text{-exact}\Longleftrightarrow\alpha\text{ is } \bar E_r\text{-exact}\Longleftrightarrow\alpha\text{ is }E_r\bar E_r\text{-exact}.
\]
\end{enumerate}

Moreover, if the equivalent conditions (A)--(E) hold, the maps $E_{r,\,BC}^{p,\,q}(X)\longrightarrow E_r^{p,\,q}(X)$ and $E_{r}^{p,\,q}(X)\longrightarrow E_{r,\, A}^{p,\,q}(X)$ are {\bf isomorphisms} as well.

\end{The}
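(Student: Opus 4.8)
The plan is to run the equivalences through a short cycle, using the Serre-type dualities of Theorem \ref{The:E_r_BC-A_duality} and Corollary \ref{Cor:Er_duality} to convert injectivity into surjectivity, and the characterisation of page-$(r-1)$-$\partial\bar\partial$-manifolds from [PSU20] (degeneration of the Fr\"olicher spectral sequence at $E_r$ together with purity of $H_{DR}$) to bridge between the structural statement (A) and the exactness statement (E).

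First I would record the duality bookkeeping. Theorem \ref{The:E_r_BC-A_duality} identifies $E_{r,A}^{n-p,n-q}(X)$ with the dual of $E_{r,BC}^{p,q}(X)$, so that $\dim E_{r,BC}^{p,q}=\dim E_{r,A}^{n-p,n-q}$. Comparing the integration pairings shows, moreover, that the identity-induced map $\phi^{p,q}\colon E_{r,BC}^{p,q}\to E_{r,A}^{p,q}$ is the transpose of $\phi^{n-p,n-q}$, and that its two factors $E_{r,BC}^{p,q}\to E_r^{p,q}$ and $E_r^{n-p,n-q}\to E_{r,A}^{n-p,n-q}$ are mutually transpose (here one needs Corollary \ref{Cor:Er_duality}). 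This yields at once the formal implications: $(B)\Rightarrow(C),(D)$ are trivial, while $(D)\Rightarrow(B)$ because injectivity of $\phi^{p,q}$ in every bidegree forces, by transposition, surjectivity in every bidegree, hence bijectivity.

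Next comes the clean analytic link $(D)\Rightarrow(E)$. Lemma \ref{Lem:E_rE_r-bar-properties} gives, on any manifold, that an $E_r\overline{E}_r$-exact form is $E_r$-exact, $\overline{E}_r$-exact and $d$-exact. Conversely, if $\alpha$ is $d$-closed and happens to be $E_r$-exact, $\overline{E}_r$-exact or $d$-exact, then in each case $\alpha\in\operatorname{Im}\partial+\operatorname{Im}\bar\partial$, so $\{\alpha\}_{E_{r,A}}=0$; injectivity of $\phi^{p,q}$ then gives $\{\alpha\}_{E_{r,BC}}=0$, i.e. $\alpha$ is $E_r\overline{E}_r$-exact. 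This establishes the four-fold equivalence (E). To close the core cycle I would prove $(E)\Rightarrow(A)$ and $(A)\Rightarrow(D)$. For $(E)\Rightarrow(A)$ I would verify the two defining conditions of the $E_r$-Hodge decomposition (Definition \ref{Def:page_r-1_ddbar}): the exactness equivalences of (E) are designed to encode exactly degeneration at $E_r$ plus purity of $H_{DR}$, so feeding them into the [PSU20] criterion returns both the existence of $d$-closed representatives and the well-definedness and bijectivity of the decomposition map. For $(A)\Rightarrow(D)$ I would show that a $d$-closed form lying in $\operatorname{Im}\partial+\operatorname{Im}\bar\partial$ is $E_r\overline{E}_r$-exact: purity of $H_{DR}$ forces such a form to be $d$-exact, and degeneration at $E_r$ then lets one truncate the resulting zig-zag (written out via the towers of Proposition \ref{Prop:E_r-closed-exact_conditions}) into the $E_r\overline{E}_r$-exactness towers of Definition \ref{Def:E_rE_r-bar}; the harmonic representatives of Corollary and Definition \ref{Cor-Def:E_r-BC_Hodge-isom} give a clean way to organise this. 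The same harmonic description, together with condition (a) of Definition \ref{Def:page_r-1_ddbar}, identifies the two factor maps as isomorphisms, which is the \emph{moreover} clause.

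The remaining and, to my mind, principal obstacle is the purely numerical implication out of (C). Duality alone is not enough: since $\dim E_{r,BC}^{p,q}=\dim E_{r,A}^{n-p,n-q}$, a direct computation gives $e_{r,BC}^k-e_{r,A}^k=\kappa_k-\kappa_{2n-k}$, where $\kappa_k:=\sum_{p+q=k}\dim\ker\phi^{p,q}$, so (C) only forces the symmetry $\kappa_k=\kappa_{2n-k}$ and cannot by itself yield $\kappa_k=0$. To finish I would feed the equalities of (C) into the numerical form of the [PSU20] criterion, combining the duality identities with Fr\"olicher-type lower bounds for $e_{r,BC}^k$ and $e_{r,A}^k$ in terms of the Betti numbers $b_k$: equality across all degrees then forces both degeneration at $E_r$ and purity, landing back in (A) and completing the cycle $(A)\Rightarrow(D)\Rightarrow(B)\Rightarrow(C)\Rightarrow(A)$ together with $(D)\Rightarrow(E)\Rightarrow(A)$.
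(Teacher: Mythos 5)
Your formal steps are correct, and they coincide with the paper's own: Theorem \ref{The:E_r_BC-A_duality} does make the identity-induced maps $E_{r,\,BC}^{p,\,q}\to E_{r,\,A}^{p,\,q}$ and $E_{r,\,BC}^{n-p,\,n-q}\to E_{r,\,A}^{n-p,\,n-q}$ transposes of one another, which gives $(D)\Rightarrow(B)$ (the paper phrases this as equality of dimensions), and your argument for $(D)\Leftrightarrow(E)$ --- every $E_r$-, $\bar E_r$- or $d$-exact pure-type form lies in $\mbox{Im}\,\partial+\mbox{Im}\,\bar\partial$, hence dies in $E_{r,\,A}$, and conversely Lemma \ref{Lem:E_rE_r-bar-properties} --- is exactly the paper's commutative-diagram argument in other words. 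The proposal breaks down, however, at the two implications that re-enter $(A)$, and these are precisely the points where the paper abandons elementary and numerical arguments in favour of the structure theory of double complexes: decomposition into squares and zigzags, computation of $E_{r,\,BC}$ and $E_{r,\,A}$ on every indecomposable piece (Lemma \ref{Lem: BCA on indecomposables}, Corollary \ref{Cor: dimensions of BCA on indecomposables}), and comparison with the zigzag characterisation of the page-$(r-1)$-property from [PSU20] and [Ste20].

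Concretely, $(C)\Rightarrow(A)$ via ``Fr\"olicher-type lower bounds in terms of the Betti numbers, with equality forcing degeneration and purity'' cannot work, because for $r\geq 2$ such equalities do not characterise the property. The Remark following the inequality corollary in Section \ref{section:Characterisations in terms of exactness properties} is exactly this warning: a zigzag of length $3$ satisfies $e_{r,\,BC}^k+e_{r,\,A}^k=2b_k$ in every total degree and yet violates the page-$(r-1)$-property, and this behaviour occurs geometrically (Hopf surfaces); moreover, the one-sided bounds you would need, such as $e_{r,\,A}^k\geq b_k$ or $e_{r,\,BC}^k\geq b_k$, are simply false (the same zigzags provide counterexamples). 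So saturation of Betti-type bounds is compatible with the failure of $(A)$ --- this is the announced contrast with the $r=1$ case of [AT13] --- and it is why the paper proves $(A)\Leftrightarrow(B)\Leftrightarrow(C)$ by the indecomposable-by-indecomposable analysis rather than by any numerical inequality. Likewise, $(E)\Rightarrow(A)$ is asserted but not proved, and it is not a routine verification: by Lemma \ref{Lem:inclusions_ddbar_r} and Theorem \ref{The:page_r-1_ddbar-equivalence_bis}, $(A)$ is equivalent to the conjunction of ${\cal C}_r^{p,\,q}\cap\ker d=\mbox{Im}\,d$ (which is indeed the first equivalence in $(E)$) \emph{and} $\partial({\cal Z}_r^{p,\,q})=\mbox{Im}\,(\partial\bar\partial)$, i.e. $d$-closed representability of every $E_r$-class. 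The second identity requires honest $\partial\bar\partial$-exactness of the forms $\partial\alpha$ with $\alpha\in{\cal Z}_r^{p,\,q}$, whereas $(E)$ controls only $E_r\bar E_r$-exactness, which is strictly weaker for $r\geq 2$; $\partial\bar\partial$-exactness appears nowhere in $(E)$, so there is nothing to feed into the [PSU20] criterion. This is the very point at which the paper declares the elementary route nontrivial and switches to the algebraic machinery; its actual path from $(E)$ to $(A)$ is $(E)\Rightarrow(D)\Rightarrow(B)$ followed by the zigzag argument. Your sketches of $(A)\Rightarrow(D)$ (``purity forces $d$-exactness, degeneration truncates the zigzag'') and of the final \emph{moreover} clause have the same status: plausible programmes whose execution is missing, and which the paper again obtains from the decomposition (under $(A)$ only dots, squares and even-length zigzags of length at most $2(r-1)$ occur, and on these all three cohomologies agree).
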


Let us first sketch how one might approach this theorem in an elementary way.
Fix $r\in\N_{>0}$ and a bidegree $(p,\,q)$. As in section \ref{subsection:duality_Er}, let ${\cal Z}_r^{p,\,q}$ and ${\cal C}_r^{p,\,q}$ stand for the space of $E_r$-closed, resp. $E_r$-exact, smooth $(p,\,q)$-forms on $X$. Let ${\cal D}_r^{p,\,q}$ stand for the space of $E_r\overline{E}_r$-exact smooth $(p,\,q)$-forms on $X$.

\begin{Lem}\label{Lem:inclusions_ddbar_r} (i)\, The following inclusions of vector subspaces of $C^\infty_{p+1,\,q}(X)$ hold: \begin{equation}\begin{array}{rcl}\label{eqn:inclusions_ddbar_r}\mbox{Im}\,(\partial\bar\partial)^{p,\, q}\subset\partial({\cal Z}_r^{p,\,q})\subset & {\cal D}_r^{p+1,\,q} & \subset{\cal C}_r^{p+1,\,q}\cap\ker d \\
	& \cap &   \\
	& \mbox{Im}\,d &\end{array}\end{equation}
	
	(ii)\, Every $E_r$-class $\{\alpha\}_{E_r}\in E_r^{p,\,q}(X)$ can be represented by a $d$-closed form if and only if $\partial({\cal Z}_r^{p,\,q})\subset\mbox{Im}\,(\partial\bar\partial)$. In other words, this happens if and only if the first inclusion in (\ref{eqn:inclusions_ddbar_r}) is an equality.
	
\end{Lem}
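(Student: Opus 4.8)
The plan is to derive the entire statement from the definitions together with Lemma \ref{Lem:E_rE_r-bar-properties}; no Hodge theory is needed. For part (i) I would check the four inclusions one at a time, reading each off the relevant definition. The chain to establish is $\mbox{Im}\,(\partial\bar\partial)^{p,\,q}\subset\partial({\cal Z}_r^{p,\,q})\subset{\cal D}_r^{p+1,\,q}\subset{\cal C}_r^{p+1,\,q}\cap\ker d$, together with the vertical inclusion ${\cal D}_r^{p+1,\,q}\subset\mbox{Im}\,d$.

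For the first inclusion, any element of $\mbox{Im}\,(\partial\bar\partial)$ of bidegree $(p+1,\,q)$ has the form $\partial\bar\partial\gamma$ with $\gamma\in C^\infty_{p,\,q-1}(X)$, so that $\bar\partial\gamma\in\mbox{Im}\,\bar\partial={\cal C}_1^{p,\,q}\subset{\cal C}_r^{p,\,q}\subset{\cal Z}_r^{p,\,q}$, using that the $E_r$-exact spaces increase with $r$ and are contained in the $E_r$-closed spaces (Proposition \ref{Prop:E_r-closed-exact_conditions}(iv)); hence $\partial\bar\partial\gamma=\partial(\bar\partial\gamma)\in\partial({\cal Z}_r^{p,\,q})$. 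For the second inclusion I would take $\alpha\in{\cal Z}_r^{p,\,q}$ and note that $E_r$-closedness forces $\bar\partial\alpha=0$; then $\partial\alpha$ is $E_r\overline{E}_r$-exact by the trivial choice $\zeta=\alpha$, $\xi=\eta=0$ in Definition \ref{Def:E_rE_r-bar}(iii), with every auxiliary form in both towers set equal to zero (the tower condition on $\zeta$ collapses to $\bar\partial\zeta=0$, which holds). The two remaining inclusions are immediate consequences of Lemma \ref{Lem:E_rE_r-bar-properties}: parts (ii) and (iii) give ${\cal D}_r^{p+1,\,q}\subset{\cal C}_r^{p+1,\,q}$ and ${\cal D}_r^{p+1,\,q}\subset\ker d$ respectively, and the vertical inclusion ${\cal D}_r^{p+1,\,q}\subset\mbox{Im}\,d$ is exactly part (iii).

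For part (ii) the key observation is that the obstruction to choosing $d$-closed representatives is precisely the failure of $\partial({\cal Z}_r^{p,\,q})$ to lie inside $\mbox{Im}\,(\partial\bar\partial)$. For the ``if'' direction, given $\alpha\in{\cal Z}_r^{p,\,q}$ with $\partial\alpha=\partial\bar\partial\gamma$, I would replace $\alpha$ by $\alpha-\bar\partial\gamma$: this stays in the same $E_r$-class (since $\bar\partial\gamma$ is $\bar\partial$-exact, hence $E_r$-exact), it remains $\bar\partial$-closed, and now $\partial(\alpha-\bar\partial\gamma)=\partial\alpha-\partial\bar\partial\gamma=0$, so the new representative is $d$-closed. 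For the ``only if'' direction, given a $d$-closed representative $\alpha'=\alpha+c$ with $c$ an $E_r$-exact form, I write $c=\partial\zeta+\bar\partial\xi$ via Proposition \ref{Prop:E_r-closed-exact_conditions}(iii); then, using $\partial\alpha'=0$ (the $(p+1,\,q)$-component of $d\alpha'=0$) and $\partial\partial\zeta=0$, I obtain $\partial\alpha=\partial\alpha'-\partial c=-\partial\bar\partial\xi\in\mbox{Im}\,(\partial\bar\partial)$. This yields $\partial({\cal Z}_r^{p,\,q})\subset\mbox{Im}\,(\partial\bar\partial)$, which is exactly the reverse of the first inclusion of (i) and hence its equality.

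Since every step is a direct unwinding of the definitions, I anticipate no serious obstacle. The only points requiring care are the bidegree bookkeeping in the first inclusion and the verification that the trivial towers genuinely satisfy Definition \ref{Def:E_rE_r-bar}(iii) in the second; the correction-term manipulation in part (ii) is the real crux, but it is the standard $\partial\bar\partial$-type argument.
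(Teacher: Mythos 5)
Your proposal is correct and follows essentially the same route as the paper: the same inclusion-by-inclusion verification for (i), including the identical trivial choice $\zeta=\alpha$, $\xi=\eta=0$ in Definition \ref{Def:E_rE_r-bar} (iii) for the second inclusion and the appeal to Lemma \ref{Lem:E_rE_r-bar-properties} for the remaining ones, and for (ii) the same computation that modifying a representative by an $E_r$-exact form $\partial a+\bar\partial b$ changes $\partial\alpha$ only by $\partial\bar\partial b$. The only cosmetic differences are that you cite Proposition \ref{Prop:E_r-closed-exact_conditions} (iv) for the first inclusion where the paper writes out the tower explicitly, and that you split the equivalence in (ii) into two directions where the paper runs it as a single chain.
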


\noindent {\it Proof.} (i)\, To prove the first inclusion, it suffices to show that every $\bar\partial$-exact $(p,\,q)$-form is $E_r$-closed. Let $\alpha=\bar\partial\beta$ be a $(p,\,q)$-form. Then, $\bar\partial\alpha=0$ and $\partial\alpha = \bar\partial(-\partial\beta)$. Putting $u_1:=-\partial\beta$, we have $\partial u_1=0$, so we can choose $u_2=0,\dots , u_{r-1}=0$ to satisfy the tower of equations under (i) of Proposition \ref{Prop:E_r-closed-exact_conditions}. This shows that $\alpha$ is $E_r$-closed.

To prove the second inclusion, let $\alpha\in{\cal Z}_r^{p,\,q}$. By (i) of Proposition \ref{Prop:E_r-closed-exact_conditions}, this implies that $\bar\partial\alpha=0$, so if we write $\partial\alpha = \partial\zeta + \partial\bar\partial\xi + \bar\partial\eta$ with $\zeta=\alpha$, $\xi=0$ and $\eta=0$, we satisfy the conditions under (ii) of Definition \ref{Def:E_rE_r-bar} with $v_j=0$ and $u_j=0$ for all $j\in\{0,\dots , r-3\}$. This proves that $\partial\alpha$ is $E_r\overline{E}_r$-exact.

The third inclusion on the first row is a consequence of (iii) and (iv) of Lemma \ref{Lem:E_rE_r-bar-properties}, while the ``vertical'' inclusion is a translation of (iv) of the same lemma.

\vspace{2ex}

(ii)\, Let $\{\alpha\}_{E_r}\in E_r^{p,\,q}(X)$ be an arbitrary class and let $\alpha$ be an arbitrary representative of it. Then, $\{\alpha\}_{E_r}\in E_r^{p,\,q}(X)$ can be represented by a $d$-closed form if and only if there exists an $E_r$-exact form $\rho=\partial a + \bar\partial b$, with $a$ satisfying the conditions $\bar\partial a = \partial c_{r-3}$, $\bar\partial c_{r-3} = \partial c_{r-4},\dots \bar\partial c_0=0$ for some forms $c_j$, such that $\partial(\alpha-\rho)=0$. This last identity is equivalent to \[\partial\bar\partial b = \partial\alpha.\]

\noindent Thus, the class $\{\alpha\}_{E_r}$ contains a $d$-closed form if and only if the form $\partial\alpha$, which already lies in $\partial({\cal Z}_r^{p,\,q})$, is $\partial\bar\partial$-exact. This proves the contention.  \hfill $\Box$

\begin{The}\label{The:page_r-1_ddbar-equivalence_bis} Let $X$ be a compact complex manifold with $\mbox{dim}_\C X=n$. Fix an arbitrary integer $r\geq 2$. The following properties are equivalent.
	
	\vspace{1ex}
	
	(A)\, $X$ is a {\bf page-$(r-1)$-$\partial\bar\partial$-manifold}.
	
	\vspace{1ex}
	
	(F)\, For all $p,q\in\{0,\dots , n\}$, the following identities of vector subspaces of $C^\infty_{p+1,\,q}(X)$ hold:
	
	\begin{equation*}\label{eqn:identities_ddbar_r_bis}(i)\hspace{2ex} \mbox{Im}\,(\partial\bar\partial) = \partial({\cal Z}_r^{p,\,q})  \hspace{3ex} \mbox{and} \hspace{3ex} (ii)\hspace{2ex} {\cal C}_r^{p,\,q}\cap\ker d = \mbox{Im}\,d.\end{equation*}
	
\end{The}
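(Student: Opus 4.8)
The plan is to work with conditions (a) and (b) of Definition~\ref{Def:page_r-1_ddbar} as the operational form of (A), and to reduce everything to the behaviour of the natural maps $J^{p,q}\colon E_r^{p,q}(X)\to H^{p+q}_{DR}(X)$, $\{\alpha\}_{E_r}\mapsto\{\alpha\}_{DR}$ on $d$-closed pure representatives, assembled into $J^k=\bigoplus_{p+q=k}J^{p,q}$. Two preliminary reformulations organise the argument. By part (ii) of Lemma~\ref{Lem:inclusions_ddbar_r}, condition (F)(i) in bidegree $(p,q)$ is equivalent to (a) there. Writing $Z^{p,q}_d:=\ker d\cap C^\infty_{p,q}(X)$ and using that every $d$-closed pure form is $E_r$-closed, (a) says precisely that $\alpha\mapsto\{\alpha\}_{E_r}$ is a surjection $Z^{p,q}_d\twoheadrightarrow E_r^{p,q}(X)$ with kernel $\mathcal{C}_r^{p,q}\cap\ker d$, so $E_r^{p,q}(X)\cong Z^{p,q}_d/(\mathcal{C}_r^{p,q}\cap\ker d)$; under this identification $J^{p,q}$ is induced by $\alpha\mapsto\{\alpha\}_{DR}$, whose kernel is $\mathrm{Im}\,d\cap C^\infty_{p,q}(X)$ and whose image is the ``pure part'' $H^{p,q}:=\{\{\alpha\}_{DR}:\alpha\in Z^{p,q}_d\}$. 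Second, (a) forces degeneration at $E_r$: a $d$-closed representative satisfies the $E_r$-closedness tower of Proposition~\ref{Prop:E_r-closed-exact_conditions}(i) with all $u_l=0$, so $d_r\{\alpha\}_{E_r}=0$; since then $E_{r+1}=E_r$ and (a) persists, $d_s=0$ for all $s\ge r$, whence $\sum_{p+q=k}\dim E_r^{p,q}(X)=\dim H^k_{DR}(X)$.

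For (A)$\Rightarrow$(F), I would read off (F) from these descriptions. Condition (a) gives (F)(i). Well-definedness of $J^{p,q}$ (a consequence of the well-definedness in (b)) means exactly that a $d$-closed $E_r$-exact form is $d$-exact, i.e. $\mathcal{C}_r^{p,q}\cap\ker d\subseteq\mathrm{Im}\,d$; and injectivity of $J^{p,q}$ (obtained from injectivity of $J^{k}$, $k=p+q$, by restricting to a single summand) shows that any $d$-exact pure form, having $J^{p,q}$-image $0$, is $E_r$-exact, i.e. $\mathrm{Im}\,d\cap C^\infty_{p,q}\subseteq\mathcal{C}_r^{p,q}$. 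Together these two inclusions are (F)(ii).

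For (F)$\Rightarrow$(A), (F)(i) supplies (a) and degeneration. Combining (F)(i) and (F)(ii) gives $\mathcal{C}_r^{p,q}\cap\ker d=\mathrm{Im}\,d\cap C^\infty_{p,q}$, so the two kernels above coincide and $J^{p,q}\colon E_r^{p,q}(X)\xrightarrow{\ \sim\ }H^{p,q}$ is an isomorphism onto the pure part; in particular each $J^{p,q}$ is well-defined and injective (hence so is $J^k$) and $\dim H^{p,q}=\dim E_r^{p,q}$. What remains is surjectivity of $J^k$, equivalently $\sum_{p+q=k}H^{p,q}=H^k_{DR}(X)$, which I would prove by peeling off the Hodge filtration: for $d$-closed $\gamma=\sum_{s\ge a}\gamma^{s,k-s}$ with $\gamma^{a,k-a}\ne 0$, the equations extracted from $d\gamma=0$ show that $\gamma^{a,k-a}$ is $E_r$-closed, so by (a) it is $E_r$-cohomologous to a $d$-closed pure form $\tilde\gamma$, say $\gamma^{a,k-a}-\tilde\gamma=\partial\zeta+\bar\partial\xi$ with $\zeta$ satisfying the $E_r$-exactness tower of Proposition~\ref{Prop:E_r-closed-exact_conditions}(iii). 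As in the computation in the proof of Lemma~\ref{Lem:E_rE_r-bar-properties}(iii), that tower makes $\partial\zeta$ $d$-exact, so $\gamma-\tilde\gamma$ is cohomologous to a $d$-closed form supported in Hodge levels $\ge a+1$; descending induction on the number of nonzero components then yields $\{\gamma\}_{DR}\in\sum_{p+q=k}H^{p,q}$. Finally $\sum_{p+q=k}\dim H^{p,q}=\sum\dim E_r^{p,q}=\dim H^k_{DR}(X)$ forces this spanning sum to be direct, so $J^k$ is bijective; together with (a) this is (b), i.e. (A).

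The step I expect to be the main obstacle is the surjectivity/spanning claim. Unlike (F)(i) and the injectivity half, it is a genuinely cross-bidegree statement that cannot be checked one bidegree at a time. The delicate point is to cancel the lowest Hodge-level component of a $d$-closed form modulo $d$-exact forms without reintroducing lower-level terms, and this is exactly what the $E_r$-exactness tower — and the resulting $d$-exactness of $\partial\zeta$ — is engineered to achieve; everything else is bookkeeping built on the identification $E_r^{p,q}(X)\cong Z^{p,q}_d/(\mathcal{C}_r^{p,q}\cap\ker d)$ and the dimension count coming from degeneration.
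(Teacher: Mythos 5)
Your proposal is correct, and its core is the same reduction the paper uses: Lemma \ref{Lem:inclusions_ddbar_r}(ii) identifies (F)(i) with condition (a) of Definition \ref{Def:page_r-1_ddbar}, and (F)(ii) is then identified with well-definedness and injectivity of the bidegree-wise maps $\{\alpha\}_{E_r}\mapsto\{\alpha\}_{DR}$ on $d$-closed representatives. The difference lies in what happens after this reduction. The paper's proof stops there, asserting that well-definedness plus injectivity ``means $X$ is page-$(r-1)$-$\partial\bar\partial$''; this compresses the passage from \emph{per-bidegree} injectivity to the \emph{bijectivity of the direct-sum map} $J^k$ demanded by Definition \ref{Def:page_r-1_ddbar}(b), a step that is not formal (images of the various $E_r^{p,\,q}(X)$ in $H^{p+q}_{DR}(X,\C)$ could a priori overlap, and surjectivity is nowhere immediate) and which in the paper implicitly leans on the machinery of [PSU20]. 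You supply exactly this missing content in a self-contained way: degeneration at $E_r$ from condition (a) (towers with all $u_l=0$, so $d_s=0$ for $s\geq r$), hence $\sum_{p+q=k}\dim E_r^{p,\,q}(X)=\dim H^k_{DR}(X,\C)$; spanning of $H^k_{DR}(X,\C)$ by classes of $d$-closed pure-type forms via the filtration-peeling argument, whose key point is that the $E_r$-exactness tower of Proposition \ref{Prop:E_r-closed-exact_conditions}(iii) makes $\partial\zeta$ $d$-exact; and finally directness of the sum of the images by the dimension count. This is a genuine completion of the paper's two-line argument rather than a different route.

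Two small slips should be repaired, neither affecting the structure. First, the parenthetical ``(hence so is $J^k$)'': injectivity of each $J^{p,\,q}$ does \emph{not} by itself imply injectivity of $J^k=\bigoplus_{p+q=k}J^{p,\,q}$, precisely because the images could fail to be independent; this is harmless only because your closing spanning-plus-dimension argument establishes directness (hence injectivity of $J^k$) independently, so restrict that parenthetical to well-definedness. Second, in the peeling step, after subtracting $\tilde\gamma$ and $dw$ (where $dw=\partial\zeta$), the level-$a$ component of the resulting $d$-closed form is $\bar\partial\xi$, not $0$; you must also subtract $d\xi$, which trades $\bar\partial\xi$ for $-\partial\xi$ sitting in level $a+1$, before the form is supported in Hodge levels $\geq a+1$. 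Both are one-line fixes.
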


\noindent {\it Proof.} By (ii) of Lemma \ref{Lem:inclusions_ddbar_r}, identity (i) in $(F)$ is equivalent to every $E_r$-class of type $(p,\,q)$ being representable by a $d$-closed form. On the other hand, if this is the case, then identity (ii) in $(F)$ is equivalent to the map $E_r^{p,\,q}(X)\ni\{\alpha\}_{E_r}\mapsto\{\alpha\}_{DR}\in H^{p+q}_{DR}(X,\,\C)$ (with $\alpha\in\ker d$) being well defined and injective, which means $X$ is page-$(r-1)$-$\partial\bar\partial$.  \hfill $\Box$

\vspace{3ex}

Note that identity (ii) in $(F)$ of Theorem \ref{The:page_r-1_ddbar-equivalence_bis} is a reformulation of the first equivalence in $(E)$ of Theorem \ref{The: page-r-Characterisations}. However, it appears to be nontrivial to obtain the whole statement of Theorem \ref{The: page-r-Characterisations} in an elementary way, so we will now make use of some algebro-cohomological machinery. As a preparation, let us compute the higher-page Bott-Chern and Aeppli cohomologies for all indecomposable double complexes. Recall that a double complex is a bigraded vector space $A$ with maps $\partial_1,\partial_2$ of bidegree $(1,0)$ and $(0,1)$ s.t. $d:=\partial_1+\partial_2$ satisfies $d^2=0$. It is bounded if $A^{p,q}=0$ for all but finitely many $(p,q)\in\Z^2$ and indecomposable if it cannot be written as a nontrivial direct sum $A=B\oplus C$. There are two kinds of indecomposable double complexes (see [KQ20], [Ste20]).\\

\noindent{\bf Squares}\\
These are double complexes generated by an element $a$ of pure bidegree $(p,q)$ s.t. $\partial_1 \partial_2 a\neq 0$:
\[
\begin{tikzcd}
\langle\partial_2 a\rangle\ar[r]&\langle\partial_2\partial_1  a\rangle\\
\langle a\rangle\ar[u]\ar[r]&\langle\partial_1 a\rangle\ar[u].
\end{tikzcd}
\]

\noindent{\bf Zigzags}\\
A zigzag is a double complex generated by pure-type elements $a_1,...,a_l$ some fixed total degree $k=p+q$ and satisfying $\del_1\del_2 a_i=0$ for all $i$ and $\partial_1 a_i=\partial_2 a_{i+1}\neq 0$ for all $i$ s.t. $a_i\neq 0\neq a_{i+1}$. In particular, $\partial_2 a_1$ and $\partial_1a_l$ may or may not be zero. 
\[
 \begin{tikzcd}
 &\langle \partial_2 a_1\rangle&&&\\
  & \langle a_1\rangle \arrow{r}\arrow{u} & \langle \partial_1 a_1 \rangle \\
  & & \langle a_2 \rangle \arrow{u} \arrow{r} & \arrow[draw=none]{rd}{\cdots} \\
  & & & & { } \\
  & & & & \langle a_l \rangle \arrow{u} \arrow{r} & \langle \partial_1 a_l \rangle.
 \end{tikzcd}
\]
The length of a zigzag is given by its dimension as a vector space. It has to be $2l-1,2l$ or $2l+1$, depending on wether the outmost differentials vanish or not. A zigzag of length one is called a dot.\\

In order to compute $E_{r,\, BC}$ and $E_{r,\, A}$ on those double complexes, first recall that $E_{r,\, BC}$ is a quotient of $H_{BC}$ and $E_{r,\, A}$ is a subspace of $H_A$. In particular, if $H_{BC}$ or $H_{A}$ are zero on some double complex, so are their lower dimensional counterparts. Further note that $\partial_1\partial_2=0$ on zigzags and we even have $\partial_1=\partial_2=0$ on dots. These observations yield

\begin{Obs}
For a square $S$ we have $E_{r,\, A}(S)=E_{r,\, BC}(S)=0$, while for a dot $D=\langle a\rangle$ we have $E_{r,\, BC}(D)=E_{r,\, A}(D)=\langle a\rangle$ for all $r\geq 1$.
\end{Obs}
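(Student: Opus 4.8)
The plan is to reduce everything to the classical Bott--Chern and Aeppli cohomologies by invoking the two facts recorded just before the statement: $E_{r,\,BC}^{p,\,q}$ is a quotient of $H_{BC}^{p,\,q}$ and $E_{r,\,A}^{p,\,q}$ is a subspace of $H_A^{p,\,q}$. Consequently, for a square $S$ it suffices to check that $H_{BC}(S)=0$ and $H_A(S)=0$, after which $E_{r,\,BC}(S)=0$ and $E_{r,\,A}(S)=0$ follow automatically for every $r\geq 1$. For the dot I would instead compute all the relevant spaces by direct inspection, exploiting that every differential vanishes and that no elements live in the neighbouring bidegrees. Throughout I identify $\partial$ with $\partial_1$ and $\bar\partial$ with $\partial_2$, these being the operators of bidegree $(1,0)$ and $(0,1)$ used to define the cohomologies.

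For the square, I would examine the four bidegrees occupied by $a$, $\partial_1 a$, $\partial_2 a$ and $\partial_1\partial_2 a$. At the bidegree of $a$ one has $\partial_1 a\neq 0$, so $a\notin\ker\partial\cap\ker\bar\partial$ and no $\partial\bar\partial$-image lands there; at the bidegrees of $\partial_1 a$ and $\partial_2 a$ the generators fail to be $\bar\partial$- respectively $\partial$-closed because $\partial_1\partial_2 a\neq 0$; and at the top corner $\partial_1\partial_2 a$ is $d$-closed but lies in $\mbox{Im}\,(\partial\bar\partial)$. Hence $H_{BC}(S)=0$. The Aeppli computation is dual: $a$ is not $\partial\bar\partial$-closed (since $\partial_1\partial_2 a\neq 0$), while each of $\partial_1 a$, $\partial_2 a$ and $\partial_1\partial_2 a$ lies in $\mbox{Im}\,\partial+\mbox{Im}\,\bar\partial$, so $H_A(S)=0$. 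In both cases the needed cancellation at the top, respectively bottom, corner uses only $\partial_1^2=\partial_2^2=0$ and the defining relation $\partial_1\partial_2 a\neq 0$.

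For the dot $D=\langle a\rangle$, all of $\partial_1$, $\partial_2$ and $\partial_1\partial_2$ vanish and there is nothing in the bidegrees $(p-1,q)$, $(p,q-1)$ and $(p-1,q-1)$. Thus $a$ is $d$-closed and $E_r\overline{E}_r$-closed, the two towers of Definition \ref{Def:E_rE_r-bar} being satisfied trivially with all auxiliary forms equal to $0$, whereas the only $E_r\overline{E}_r$-exact $(p,q)$-form and the only element of $\mbox{Im}\,\partial+\mbox{Im}\,\bar\partial$ is $0$, for want of anything to differentiate. Both quotients therefore collapse to $\langle a\rangle$ for every $r\geq 1$. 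I expect no serious obstacle in any of this: the entire argument is a finite inspection of at most four one-dimensional pieces, and the only point demanding a moment's care is that the top corner of the square is $d$-closed yet Bott--Chern-trivial, and dually its bottom corner is Aeppli-trivial, which is precisely what forces $H_{BC}(S)$ and $H_A(S)$ to vanish.
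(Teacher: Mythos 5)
Your proposal is correct and follows essentially the same route as the paper: the paper also deduces the square case from the vanishing of the classical cohomologies $H_{BC}(S)$ and $H_A(S)$ via the observations that $E_{r,\,BC}$ is a quotient of $H_{BC}$ and $E_{r,\,A}$ is a subspace of $H_A$, and handles the dot by noting $\partial_1=\partial_2=0$ there. The only difference is that you spell out the (standard) bidegree-by-bidegree check that $H_{BC}(S)=H_A(S)=0$, which the paper leaves implicit.
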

%
%
For higher length zigzags $Z$, generated by $a_1,...,a_l$, we get that $H_{A}(Z)=\langle a_1,...,a_l\rangle$ keeps the lower antidiagonal, while $H_{BC}(Z)=\langle \partial_2 a_1,....,\partial_2 a_l, \partial_1 a_l\rangle$ remembers the higher antidiagonal. To describe their higher-page analogues, it suffices to understand the kernel, resp. cokernel, of the projection $H_{BC}(Z)\twoheadrightarrow E_{r,\, BC}(Z)$, resp. the inclusion $E_{r,\, A}(Z)\hookrightarrow H_{A}(Z)$. These are described as follows.

\begin{Lem}\label{Lem: BCA on indecomposables}
Let $Z$ be a zigzag of length at least two, generated by $a_1,...,a_l$. For any $i\not\in\{1,...,l\}$, set $a_i:=0$. Then, for any $r\geq 2$, the contributions of different zigzags are as follows:

\begin{enumerate}
\item {\bf Even length type $I$}: If $\partial_2 a_1=0$ and $\partial_1 a_l\neq 0$, one has:
\begin{align*}
\ker\left(H_{BC}(Z)\rightarrow E_{r,\, BC}(Z)\right)&=\langle \partial_1 a_1,...,\partial_1 a_{r-1}\rangle\\
\operatorname{coker}\left(E_{r,\, A}(Z)\rightarrow H_A(Z)\right)&=\langle a_{l-r+2},...,a_l\rangle.
\end{align*}
\item {\bf Even length type $II$}: If $\partial_2 a_1\neq 0$ and $\partial_1 a_l= 0$, one has:
\begin{align*}
\ker\left(H_{BC}(Z)\rightarrow E_{r,\, BC}(Z)\right)&=\langle \partial_1 a_{l-r+2},...,\partial_1 a_{l}\rangle\\
\operatorname{coker}\left(E_{r,\, A}(Z)\rightarrow H_A(Z)\right)&=\langle a_{1},...,a_{r-1}\rangle.
\end{align*}
\item  {\bf Odd length type $M$}: If $\partial_2a_1=0=\partial_2 a_l$, one has $E_{r,\, A}(Z)=H_A(Z)$ and
\[
\ker\left(H_{BC}(Z)\rightarrow E_{r,\, BC}(Z)\right)=\langle\partial_1 a_1,...,\partial_1 a_{r-1},\partial 2 a_{l-r+2},...,\partial_2 a_l\rangle.
\] 
\item {\bf Odd length type $L$}: If $\partial_2 a_1\neq 0\neq \partial_2 a_l$, one has $H_{BC}(Z)=E_{r,\, BC}(Z)$ and
\[
\operatorname{coker}\left(E_{r,\, A}(Z)\rightarrow H_A(Z)\right)=\langle a_1,...,a_{r-1}, a_{l-r+2},...,a_l\rangle.
\]
\end{enumerate}
Note that for large $r$, some of the written generators could be zero or there could be some overlap in the last two cases.
\end{Lem}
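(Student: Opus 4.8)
\noindent The statement is a purely algebraic computation on one abstract zigzag $Z$, which will later be combined, via additivity of all the functors and the square/zigzag decomposition of [KQ20], [Ste20], with the Observation to compute $E_{r,\,BC}$ and $E_{r,\,A}$ on arbitrary bounded double complexes. So the plan is to work directly on $Z$, reading $\partial=\partial_1$, $\bar\partial=\partial_2$, and interpreting $E_r\overline{E}_r$-exactness and $E_r\overline{E}_r$-closedness through the towers of Definition \ref{Def:E_rE_r-bar}, which make sense on any double complex. The key simplification is that $Z$ is concentrated in the two total degrees $k$ and $k+1$: hence $\mbox{Im}\,(\partial_1\partial_2)=0$, and at total degree $k$ both $\mbox{Im}\,\partial_1$ and $\mbox{Im}\,\partial_2$ vanish. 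Combined with Lemma \ref{Lem:E_rE_r-bar-properties} (an $E_r\overline{E}_r$-exact form is $d$-exact), this shows that $\ker(H_{BC}(Z)\to E_{r,\,BC}(Z))$ is the span of those generators of $H_{BC}(Z)=\langle\partial_2 a_1,\dots,\partial_2 a_l,\partial_1 a_l\rangle$ which are $E_r\overline{E}_r$-exact, while $\operatorname{coker}(E_{r,\,A}(Z)\hookrightarrow H_A(Z))$ is $H_A(Z)=\langle a_1,\dots,a_l\rangle$ modulo the span of the $a_i$ that are $E_r\overline{E}_r$-closed. Everything thus reduces to two membership questions, decided generator by generator.

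The computational core is a single ``peeling'' lemma, proved by induction on the height of a tower. Expanding ``$\partial_2\zeta$ reaches $0$ in at most $m$ steps'' on $Z$ and using the defining relations $\partial_1 a_i=\partial_2 a_{i+1}$ for $1\le i\le l-1$, I would first observe that its bottom equation $\partial_2 v_0=0$ has, among degree-$k$ elements, only the solutions $\langle a_1\rangle$ when $\partial_2 a_1=0$ and only $v_0=0$ when $\partial_2 a_1\ne 0$; climbing back up the tower then forces $\zeta\in\langle a_1,\dots,a_m\rangle$ in the first case and $\zeta=0$ in the second. The mirror statement holds for ``$\partial_1\eta$ reaches $0$ in at most $m$ steps'' measured from the $\partial_1 a_l$ end, and the same bookkeeping applied to the two ``runs at least $r-1$ times'' conditions of Definition \ref{Def:E_rE_r-bar} decides $E_r\overline{E}_r$-closedness of an $a_i$. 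Substituting $m=r-1$ into $\alpha=\partial_1\zeta+\partial_1\partial_2\xi+\partial_2\eta$ (with $\partial_1\partial_2\xi=0$ on $Z$) then yields a clean principle: an upper generator is $E_r\overline{E}_r$-exact iff it lies within $r-1$ rungs of an end at which the outer differential vanishes, and a lower generator $a_i$ is $E_r\overline{E}_r$-closed iff its distance to every end at which the outer differential is nonzero is at least $r-1$.

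Reading this principle off in the four configurations of the pair $(\partial_2 a_1,\,\partial_1 a_l)$ produces the four displayed answers. To halve the casework I would use that conjugation $\partial_1\leftrightarrow\partial_2$ together with reversing the order of the generators exchanges types $I$ and $II$ and preserves types $M$ and $L$, and is compatible with both $E_{r,\,BC}$ and $E_{r,\,A}$; the type-$II$ formulas then follow from the type-$I$ ones. The evident left–right symmetry between the Bott-Chern column (governed by the \emph{free} ends, where the outer differential vanishes) and the Aeppli column (governed by the \emph{blocked} ends) is exactly the dual face of this peeling.

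The hard part will be the inductive bookkeeping in the peeling lemma — pinning down precisely which coefficient is annihilated at each rung and checking that the induction terminates correctly at the two distinct ends — and, above all, the degenerate regime $r-1\ge l$ flagged in the statement, where some listed generators (such as $\partial_1 a_l$ in type $II$ or $\partial_2 a_1$ in type $M$) are already zero and where, in the odd-length cases, the contributions of the two ends overlap. I would absorb these uniformly by phrasing the peeling lemma in terms of ``distance to a free, resp.\ blocked, end'' with the convention $a_i:=0$ for $i\notin\{1,\dots,l\}$, so that vanishings and overlaps are handled automatically rather than case by case; everything else is the linear algebra dictated by the explicit towers of Definition \ref{Def:E_rE_r-bar} and Proposition \ref{Prop:E_r-closed-exact_conditions}.
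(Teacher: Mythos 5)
Your proposal is correct and follows essentially the same route as the paper's own proof: the paper, too, works directly on the zigzag and observes that the towers of Definition \ref{Def:E_rE_r-bar} can be solved exactly for the $r-1$ generators adjacent to an end at which the outer differential vanishes, and gets stuck at an end where it does not; your ``peeling'' lemma is the rigorous form of that one-line observation, and your explicit treatment of the closedness conditions fills in what the paper dismisses as ``The calculation for $H_A$ is analogous.''

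One discrepancy you should be aware of, since your (correct) argument will not reproduce the statement verbatim: your end-distance principle, or equivalently your $I\leftrightarrow II$ symmetry applied to the type-$I$ answer, yields for type $II$ the kernel $\langle \partial_2 a_{l-r+2},\dots,\partial_2 a_l\rangle=\langle \partial_1 a_{l-r+1},\dots,\partial_1 a_{l-1}\rangle$, i.e.\ $r-1$ nonzero generators, whereas the printed formula $\langle \partial_1 a_{l-r+2},\dots,\partial_1 a_l\rangle$ has a vanishing last generator (in type $II$, $\partial_1 a_l=0$) and hence only $r-2$ nonzero ones. The printed version is incompatible with the paper's own dimension count in Corollary \ref{Cor: dimensions of BCA on indecomposables}, which gives $e_{r,\,BC}(Z)=\max\{l-r+1,0\}$ for even-length zigzags, and with the $I\leftrightarrow II$ symmetry, so it is an indexing typo in the statement rather than a gap in your argument; the same applies to the defining conditions of types $M$ and $L$, where ``$\partial_2 a_l$'' should read ``$\partial_1 a_l$'' (note that $\partial_2 a_l=\partial_1 a_{l-1}\neq 0$ automatically on any zigzag of length at least two). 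Carry out the peeling bookkeeping as you proposed; it proves the corrected statement, and you should not distort it to match the misprint.
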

\begin{proof}
Let us only do the computation for $E_{r,\, BC}$. The elements that get modded out here in addition to the $\partial_1\partial_2$-exact ones are the $E_r$-exact ones and the $\bar E_r$-exact ones. 
By the definition of $ E_r$-exactness, this means that, whenever a zigzag has top left corner generated by $a_1$ with $\partial_2 a_1=0$, i.e.:
\[
\begin{tikzcd}
\langle a_1\rangle\ar[r]&\langle\partial_1 a_1\rangle&&&\\
&\langle a_2\rangle\ar[u]\ar[r]&\langle\partial_1 a_2\rangle&&\\
&&&\ddots&
\end{tikzcd}
\]
the classes of $\partial_1 a_1,...,\partial_1 a_{r-1}$ are zero in $E_{r,\, BC}(Z)$. Along the same lines, if a zigzag has bottom right corner generated by $a_1$ with $\partial_2 a_1=0$, i.e.:
\[
\begin{tikzcd}
\ddots&{}&&&\\
&\langle a_{2}\rangle\ar[u]\ar[r]&\langle\partial_2 a_1\rangle &&\\
&&\langle a_1\rangle\ar[u]&&
\end{tikzcd}
\]
the classes of $\partial_2 a_1,...,\partial_2 a_{r-1}$ are zero in $E_{r,\, BC}(Z)$. This yields the result for $E_{r,\, BC}$. The calculation for $H_A$ is analogous. 
\end{proof}
Let us also record what this yields for the dimensions of the new cohomology groups.
\begin{Cor}\label{Cor: dimensions of BCA on indecomposables}Let $Z$ be an indecomposable bounded double complex and let $r\geq 2$.
\begin{enumerate}
\item If $Z$ is a square, then $e_{r,\, BC}(Z)=0=e_{r,\, A}(Z)$.
\item If $Z$ is a dot, then $e_{r,\, BC}(Z)=1=e_{r,\, A}(Z)$.
\item If $Z$ is a zigzag of odd length $2l+1\geq 3$ of type $L$, one has $e_{r,\, BC}(Z)=h_{BC}(Z)=l+1$ and $e_{r,\, A}(Z)=\max\{l-2(r-1),0\}$.
\item If $Z$ is a zigzag of odd length $2l+1\geq 3$ of type $M$, one has $e_{r,\, BC}(Z)=\max\{l-2(r-1), 0\}$ and $e_{r,\, A}(Z)=h_{A}(Z)=l+1$.
\item If $Z$ is a zigzag of even length $2l$, one has $e_{r,\, BC}(Z)=e_{r,\, A}(Z)=\max\{l-r+1,0\}$.
\end{enumerate}
\end{Cor}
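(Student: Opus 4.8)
The plan is to deduce all five cases directly from the two structural results already in hand---the Observation on squares and dots and Lemma~\ref{Lem: BCA on indecomposables} on zigzags---by converting the descriptions of kernels and cokernels there into dimension counts. Throughout I write $m$ for the number of generators $a_1,\dots,a_m$ of a zigzag $Z$ (the integer denoted $l$ in Lemma~\ref{Lem: BCA on indecomposables}), reserving the symbol $l$ for the length parameter appearing in the Corollary. Since $E_{r,\,BC}$ is a quotient of $H_{BC}$ while $E_{r,\,A}$ is a subspace of $H_A$, the two identities I will repeatedly use are
\[
e_{r,\,BC}(Z)=h_{BC}(Z)-\dim\ker\bigl(H_{BC}(Z)\to E_{r,\,BC}(Z)\bigr),\qquad
e_{r,\,A}(Z)=h_{A}(Z)-\dim\operatorname{coker}\bigl(E_{r,\,A}(Z)\hookrightarrow H_{A}(Z)\bigr).
\]

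Cases (1) and (2) are immediate from the Observation, which gives $E_{r,\,BC}(S)=E_{r,\,A}(S)=0$ for a square and $E_{r,\,BC}(D)=E_{r,\,A}(D)=\langle a\rangle$ for a dot. For the zigzag cases I first record the ordinary Bott--Chern and Aeppli numbers from the description recalled just before Lemma~\ref{Lem: BCA on indecomposables}: one always has $h_A(Z)=m$ (the lower antidiagonal $\langle a_1,\dots,a_m\rangle$, since every image lies one antidiagonal higher), while $h_{BC}(Z)$, the number of nonzero elements of the upper antidiagonal, equals $m-1$, $m$, or $m+1$ according as both, exactly one, or neither of the outer differentials $\partial_2 a_1,\partial_1 a_m$ vanish. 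Matching these against the lengths $2m-1$, $2m$, $2m+1$ yields the dictionary $m=l+1$ for type $M$, $m=l$ for type $L$, and $m=l$ in even length.

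With this dictionary, each kernel or cokernel dimension in Lemma~\ref{Lem: BCA on indecomposables} is simply the number of \emph{distinct nonzero} generators (or images of generators) occurring in the listed spanning set, a generator $a_i$ being declared zero once $i\notin\{1,\dots,m\}$. For types $I$ and $II$ (even length, $m=l$) the relevant set is a single block of $r-1$ consecutive indices abutting the generator end; the two orientations are interchanged by $\partial_1\leftrightarrow\partial_2$, so it suffices to treat one and observe that the generator-end mechanism contributes exactly $r-1$ classes, giving block-dimension $\min\{r-1,m\}$. Subtracting from $h_{BC}=h_A=m$ produces $\max\{m-r+1,0\}=\max\{l-r+1,0\}$ for both $e_{r,\,BC}$ and $e_{r,\,A}$, which is (5). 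For type $L$ the Bott--Chern kernel is zero, so $e_{r,\,BC}=h_{BC}=m+1=l+1$, while the Aeppli cokernel is the union of the two blocks $\{a_1,\dots,a_{r-1}\}$ and $\{a_{m-r+2},\dots,a_m\}$; dually, for type $M$ the Aeppli cokernel is zero, so $e_{r,\,A}=h_A=m=l+1$, while (after using $\partial_2 a_i=\partial_1 a_{i-1}$) the Bott--Chern kernel is the union of $\{\partial_1 a_1,\dots,\partial_1 a_{r-1}\}$ and $\{\partial_1 a_{m-r+1},\dots,\partial_1 a_{m-1}\}$.

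The only genuine work, and the main obstacle, is the combinatorics of these unions. In both the $L$ and $M$ cases the two blocks are disjoint exactly for $m$ large relative to $r$ (the thresholds being $m-2(r-1)=0$ for type $L$ and $(m-1)-2(r-1)=0$ for type $M$), where the count is $2(r-1)$ and subtraction gives $l-2(r-1)$; for smaller $m$ the blocks overlap and, together with the truncation $a_i=0$ for out-of-range $i$, exhaust all of $H_A(Z)$ (resp. $H_{BC}(Z)$), forcing the relevant $E_r$-group to vanish. A short check that in this saturated regime one has $l-2(r-1)\le 0$ then packages both regimes uniformly as $\max\{l-2(r-1),0\}$, which is (3) and (4). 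The care needed is entirely in keeping the index ranges, the identification $\partial_2 a_i=\partial_1 a_{i-1}$, and the two length conventions consistent, so that the disjoint/overlapping dichotomy produces precisely the truncated linear functions $\max\{l-r+1,0\}$ and $\max\{l-2(r-1),0\}$.
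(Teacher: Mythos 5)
Your proof is correct and takes essentially the same route as the paper, which records this corollary as a direct consequence of the Observation and Lemma \ref{Lem: BCA on indecomposables}; you simply make the implicit dimension count explicit. Your generator-count/length dictionary ($m=l$ for type $L$ and for even length, $m=l+1$ for type $M$), the identification $\partial_2 a_i=\partial_1 a_{i-1}$, and the disjoint-versus-saturated block analysis all check out against the formulas $\max\{l-r+1,0\}$ and $\max\{l-2(r-1),0\}$.
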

Recall ([KQ20], [PSU20], [Ste20]) that any bounded double complex $A$ can be written as a direct sum of indecomposable ones, all indecomposable ones are either squares or zigzags and $A$ has the page-$r$-$\partial_1\partial_2$-property if and only if in any decomposition into indecomposables, there are no odd length zigzags other than dots (length one) and no even length zigzags of length greater than $2r$. Thus, we get
\begin{Cor}
For any bounded double complex $A$ such that all the numerical quantities involved are finite, there is an inequality:
\[
e_{r,\, A}(A)+e_{r,\, BC}(A)\geq e_r(A)+\bar{e}_r(A)\geq 2b(A).
\]
Equality holds if $A$ satisfies the page-$(r-1)$-$\partial_1\partial_2$-property
\end{Cor}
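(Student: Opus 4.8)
The plan is to reduce the statement to a computation on indecomposable double complexes. By the structure theory recalled above ([KQ20], [Ste20], [PSU20]), $A$ decomposes as a direct sum of squares and zigzags, and each of the quantities $e_{r,A}$, $e_{r,BC}$, $e_r$, $\bar e_r$ and $b$ is additive over direct sums. Hence it suffices to prove both inequalities on every indecomposable summand and, for the equality assertion, to verify that each indecomposable permitted under the page-$(r-1)$-$\partial_1\partial_2$-property contributes an equality in both inequalities. The values of $e_{r,BC}$ and $e_{r,A}$ on indecomposables are recorded in Corollary \ref{Cor: dimensions of BCA on indecomposables}; I would supplement them with the values of $e_r$, $\bar e_r$ and $b$, obtained by the same column/row bookkeeping as in Lemma \ref{Lem: BCA on indecomposables}.

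For the right-hand inequality $e_r(A)+\bar e_r(A)\geq 2b(A)$ I would bypass the case analysis. As $A$ is bounded, both the Fr\"olicher spectral sequence and its conjugate converge to the total cohomology, so $e_\infty=\bar e_\infty=b$, while the page dimensions are non-increasing: $e_1\geq e_2\geq\cdots\geq e_\infty=b$, and likewise for $\bar e_\bullet$. Thus $e_r\geq b$ and $\bar e_r\geq b$ for every $r$, giving $e_r+\bar e_r\geq 2b$, with equality exactly when both sequences have already degenerated at the $r$-th page. If $A$ is page-$(r-1)$-$\partial_1\partial_2$, the Fr\"olicher spectral sequence degenerates at $E_r$ by [PSU20]; since this property is characterised by the absence of odd zigzags other than dots and of even zigzags of length exceeding $2(r-1)$, a condition symmetric in $\partial_1$ and $\partial_2$, the conjugate sequence degenerates at page $r$ as well, and equality follows.

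For the left-hand inequality I would show that $\delta(Z):=e_{r,A}(Z)+e_{r,BC}(Z)-e_r(Z)-\bar e_r(Z)\geq 0$ on each indecomposable $Z$. A dot and every odd zigzag each contribute $1$ to $e_r$, to $\bar e_r$ and to $b$: their first-page dimension is already $1$ and $e_\infty=b=1$, so monotonicity pins $e_r=\bar e_r=1$ for all $r$. A square contributes $0$ everywhere. An even zigzag of length $2l$ has $E_1$ of dimension $2$ with its two surviving classes at bidegree distance $(l,-l+1)$, so, as $b=0$, the unique nonzero differential is $d_l$; hence $e_r+\bar e_r$ equals $2$ for $r\leq l$ and $0$ for $r>l$. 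Substituting the formulas of Corollary \ref{Cor: dimensions of BCA on indecomposables} then yields $\delta=2\max\{l-r,0\}$ on an even zigzag of length $2l$ and $\delta=(l-1)+\max\{l-2r+2,0\}$ on an odd zigzag of length $2l+1$ (of either type), both nonnegative. Summing over summands gives $e_{r,A}(A)+e_{r,BC}(A)\geq e_r(A)+\bar e_r(A)$.

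Under the page-$(r-1)$-property the only admissible indecomposables are squares, dots and even zigzags of length $\leq 2(r-1)$, i.e. with $l\leq r-1$; on each of these $\delta=0$ (for the even ones because $l-r<0$), so the left-hand inequality is also an equality, completing the equality statement. The main obstacle I anticipate is the Fr\"olicher bookkeeping of the previous paragraph, and in particular the asymmetry that on a single even zigzag one of $e_r,\bar e_r$ vanishes identically while the other carries the drop at page $l$ (type $I$ is seen only by the Fr\"olicher sequence, type $II$ only by its conjugate); one must check that it is the symmetric sum $e_r+\bar e_r$ that enters, which is exactly what makes the per-summand inequality clean. Everything else is direct substitution into the dimension formulas already established.
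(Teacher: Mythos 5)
Your proof is correct and follows the paper's overall strategy: reduce by additivity to indecomposable summands and feed in the dimension formulas of Corollary \ref{Cor: dimensions of BCA on indecomposables}. Where you genuinely differ is in how the middle quantity $e_r+\bar e_r$ is handled. The paper simply quotes [Ste20] for its values on indecomposables ($0$ for squares and even zigzags of length at most $2(r-1)$, $2$ for all other zigzags) and reads off both inequalities case by case; you instead (a) prove the right-hand inequality $e_r(A)+\bar e_r(A)\geq 2b(A)$ directly for any bounded complex, using monotonicity of page dimensions and convergence of both spectral sequences to total cohomology, with equality exactly at joint degeneration, and (b) re-derive the zigzag values of $e_r$ and $\bar e_r$ yourself, correctly accounting for the orientation asymmetry (a given even zigzag is seen by only one of the two spectral sequences, so only the symmetric sum behaves uniformly). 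Your route is more self-contained, and the monotonicity argument isolates cleanly what the right-hand equality means, whereas the paper's is shorter because the Fr\"olicher computations on zigzags are already in the literature. Your numerical bookkeeping ($\delta=2\max\{l-r,0\}$ on even zigzags of length $2l$, $\delta=(l-1)+\max\{l-2r+2,0\}$ on odd zigzags of length $2l+1$ of either type, $\delta=0$ on squares and dots) is accurate and suffices both for the inequality and for the equality statement under the page-$(r-1)$-$\partial_1\partial_2$-property.
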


\begin{proof}
Since all the quantities involved are additive under direct sums, it suffices to show this for indecomposable double complexes $Z$. The middle and right hand side were computed in [Ste20]: $e_r(Z)+e_r(Z)$ equals $0$ for a square and for a zigzag of length $2l\leq 2(r-1)$, while it equals $2$ for all other zigzags. Also, $b(Z)=1$ for odd length zigzags and $b(Z)=0$ otherwise. In particular, for an arbitrary double complex, the middle quantity is just twice the number of all zigzags which have odd length or even length at least $2r$. By the previous Corollary \ref{Cor: dimensions of BCA on indecomposables}, $e_{r,\, A}(Z)+e_{r,\, BC}(Z)=0$ for squares and even length zigzags of length $2l\leq 2(r-1)$, while it equals $2$ for dots and zigags of length $2r$ and is greater than or equal to $2$ for all other zigzags.
\end{proof}
\begin{Rem}
Somewhat unexpectedly, the equality $e_{r,\, A}(A)+e_{r,\, BC}(A)=2b(A)$ does \textbf{not} imply the page-$(r-1)$-$\partial_1\partial_2$-property for $r\geq 2$, contrary to the case $r=1$ ([AT13]). For example, both sides are equal to $2$ for $r\geq 2$ and $A$ a zigzag of length $3$. As one may see, for example from a Hopf surface, this behaviour really occurs in geometric situations. A different generalisation of the case $r=1$ has been obtained in [PSU20].
\end{Rem}

\begin{Cor}Let $A$ be a bounded double complex such that $e_{r,\, BC}^k(A)$ and $e_{r,\, A}^k(A)$ are finite. The following properties are equivalent:
\begin{enumerate}
\item[(A')] $A$ has the \textbf{page-$(r-1)$-$\partial_1\partial_2$-property}.
\item[(B')] The map $E_{r,\, BC}(A)\rightarrow E_{r,\, A}(A)$ is an isomorphism.
\item[(C')] One has $e_{r,\, BC}^k(A)=e_{r,\, A}^k(A)$ for all $k\in\Z$.
\end{enumerate}
\end{Cor}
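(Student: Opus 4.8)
The plan is to reduce the whole equivalence to a computation on indecomposable summands. Since $A$ is bounded it decomposes as a direct sum of squares and zigzags, and each datum in play---the page-$(r-1)$-$\partial_1\partial_2$-property, the identity-induced map $f\colon E_{r,\,BC}(A)\to E_{r,\,A}(A)$, and the graded dimensions $e_{r,\,BC}^k,e_{r,\,A}^k$---is additive over such a decomposition. I would therefore argue summand by summand, using the dimensions tabulated in Corollary \ref{Cor: dimensions of BCA on indecomposables} together with the bookkeeping, read off from Lemma \ref{Lem: BCA on indecomposables}, that on a zigzag $Z$ whose generators have total degree $k$ the group $E_{r,\,A}(Z)$ sits in total degree $k$ while $E_{r,\,BC}(Z)$ sits in total degree $k+1$. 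In particular $f$ kills the $E_{r,\,BC}$-part of every zigzag of length $\geq 2$, since there source and target occupy different total degrees.

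First I would establish (A')$\Leftrightarrow$(B'). If $A$ is page-$(r-1)$ its summands are squares, dots and even zigzags of length $\leq 2(r-1)$; on a square both groups vanish, on a dot $f$ is the identity, and on an even zigzag of length $2l$ with $l\leq r-1$ one has $e_{r,\,BC}(Z)=e_{r,\,A}(Z)=\max\{l-r+1,0\}=0$. Thus $f$ is an isomorphism on every summand, hence on $A$, giving (B'). Conversely, any violation of (A') exhibits a forbidden summand $Z$ (odd of length $\geq 3$, or even of length $>2(r-1)$) for which at least one of $E_{r,\,BC}(Z),E_{r,\,A}(Z)$ is nonzero; as these lie in different total degrees, $f$ either has nonzero kernel or is not onto, so (B') fails. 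The implication (B')$\Rightarrow$(C') is then immediate, an isomorphism preserving all graded dimensions.

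The crux is (C')$\Rightarrow$(A'). Because $f$ is bidegree-preserving and annihilates every length-$\geq 2$ contribution, one has the degreewise identity $e_{r,\,BC}^k-e_{r,\,A}^k=\dim(\ker f)^k-\dim(\operatorname{coker} f)^k$, where $(\ker f)^k$ gathers $E_{r,\,BC}(Z)$ over zigzags with generators in degree $k-1$ and $(\operatorname{coker} f)^k$ gathers $E_{r,\,A}(Z)$ over zigzags with generators in degree $k$; so (C') says exactly that these two graded dimensions coincide in every degree. The main obstacle I anticipate is controlling cancellations in this count: a type-$L$ zigzag feeds only $\ker f$ and a type-$M$ zigzag feeds only $\operatorname{coker} f$, so a type-$L$ summand with generators in degree $k-1$ can a priori be balanced, within the single degree $k$, against a type-$M$ summand with generators in degree $k$, and short odd zigzags (where exactly one of the two groups is nonzero) terminate any descending chain that one might hope to use for a contradiction. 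Overcoming this is where I expect the real work to lie, and where a bare dimension count looks too coarse: I would attempt to exclude such balancing by passing to a strictly finer invariant that records, anti-diagonal by anti-diagonal, not merely the dimensions but the precise bidegrees and the Bott-Chern-versus-Aeppli direction of each forbidden contribution, and---in the geometric application---by bringing in the Serre-type pairing of Theorem \ref{The:E_r_BC-A_duality}, which ties type-$L$ and type-$M$ contributions together and so constrains the cancellations that threaten the implication.
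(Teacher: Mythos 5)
Your treatment of (A')$\Leftrightarrow$(B') and of (B')$\Rightarrow$(C') is correct and is essentially the paper's own argument: all three conditions are additive over a decomposition into indecomposables, the map $E_{r,\,BC}(A)\to E_{r,\,A}(A)$ respects that decomposition, it is an isomorphism on squares, dots and even zigzags of length at most $2(r-1)$, and it fails to be an isomorphism on every forbidden zigzag. The direction (C')$\Rightarrow$(A'), which you rightly isolate as the crux and leave open, is exactly the point where the paper's proof is thin: it is disposed of in one line, by asserting that it suffices that (C') be violated on each forbidden indecomposable separately. That assertion tacitly assumes that violations coming from different summands cannot cancel against each other -- precisely the cancellation you worry about.

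Your worry is not a technical obstacle to be overcome; it is a counterexample, so do not look for the missing idea -- there is none to find. Take $Z_L$ to be the length-$3$ zigzag of type $L$ generated by $a$ in bidegree $(p,q)$, and $Z_M$ the length-$3$ zigzag of type $M$ with generators $b_1,b_2$ in bidegrees $(p,q+1)$ and $(p+1,q)$, so that its corner sits in $(p+1,q+1)$. By Lemma~\ref{Lem: BCA on indecomposables} and Corollary~\ref{Cor: dimensions of BCA on indecomposables}, for every $r\geq 2$ one has $E_{r,\,BC}(Z_L)=H_{BC}(Z_L)=\langle\partial_1 a,\,\partial_2 a\rangle$ and $E_{r,\,A}(Z_L)=0$, while $E_{r,\,BC}(Z_M)=0$ and $E_{r,\,A}(Z_M)=H_A(Z_M)=\langle b_1,\,b_2\rangle$. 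Hence $A:=Z_L\oplus Z_M$ has both cohomologies one-dimensional exactly in the bidegrees $(p+1,q)$ and $(p,q+1)$ and zero elsewhere: condition (C') holds, even in its bidegree-wise refinement, yet $A$ contains odd zigzags of length $3$, so (A') fails, and the map of (B') is zero. So (C')$\Rightarrow$(A') is false as stated; in particular your first proposed remedy (recording bidegrees rather than total degrees) cannot rescue it.

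Your second proposed remedy, invoking Serre duality in the geometric situation, works against you rather than for you: dualizing a double complex turns type-$L$ zigzags into type-$M$ zigzags in the complementary bidegrees, so on a compact complex manifold $L$- and $M$-zigzags occur in dual pairs, and whenever the $L$-generator sits in total degree $n-1$ the dual pair is exactly the cancelling configuration above. This is not hypothetical: granting the paper's own claims about the Calabi-Eckmann manifold $X_{1,1}=S^3\times S^3$ (degeneration at $E_2$ from Borel's computation, and $h_{BC}>h_{\bar\partial}$ via [TT17], cf. Lemma~\ref{Lem:C-E_not-DR-pure}), the structural constraints force its Dolbeault complex to contain precisely $Z_L$ with generator at $(1,1)$ plus $Z_M$ with generators at $(1,2),(2,1)$, besides dots, squares and length-$2$ even zigzags; one then computes $e^k_{2,\,BC}(X_{1,1})=e^k_{2,\,A}(X_{1,1})$ for all $k$ although $X_{1,1}$ is not a page-$1$-$\partial\bar\partial$-manifold, so the defect propagates to condition (C) of Theorem~\ref{The: page-r-Characterisations} as well. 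What is actually true, and what your argument correctly establishes, is (A')$\Leftrightarrow$(B') together with (B')$\Rightarrow$(C'); for $r\geq 2$ the purely numerical condition (C') is strictly weaker and must be strengthened or abandoned. Your instinct that a bare dimension count is too coarse was exactly right.
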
 

\begin{proof}
Note that property {\it(B')} implies property {\it(C')}. Thus, it suffices to show that property {\it(B')} is  satisfied for squares, dots and even length zigzags of length $\leq 2(r-1)$ and property {\it(C')} is violated for all other zigzags. This is a direct consequence of Lemma \ref{Lem: BCA on indecomposables} and Corollary \ref{Cor: dimensions of BCA on indecomposables}.
\end{proof}

\begin{proof}[Proof of Theorem \ref{The: page-r-Characterisations}] 
Only the last two points remain to be proved. By the duality between $E_{r,\, BC}(X)$ and $E_{r,\, A}(X)$, both spaces have the same dimension, so $(D)\Leftrightarrow (B)$.\\
Property $(E)$ can be restated as the fact that all the maps originating from $E_{r,\, BC}^{p,\, q}(X)$ (including the one with target in $E_{r,\, A}^{p,\,q}(A)$) in the following commutative diagram are injective:
\[
\begin{tikzcd}
&E_{r,\, BC}^{p,\, q}(X)\ar[ld]\ar[d]\ar[rd]&\\
E_r^{p,\, q}(X)\ar[rd]&H_{dR}^{p+q}(X,\,\C)\ar[d]&\bar E_r^{p,\, q}(X)\ar[ld]\\
&E_{r,\, A}^{p,\, q}(X)&
\end{tikzcd}
\]
In particular, $(E)\Rightarrow (D)$. Since the composition of two maps being injective implies the right-hand map is injective, we get $(D)\Rightarrow(E)$.

\vspace{1ex}

The last statement of Theorem \ref{The: page-r-Characterisations} follows from the above arguments. \end{proof}

\vspace{2ex}

The equivalence between $(A)$ and $(D)$ in Theorem \ref{The: page-r-Characterisations} is an extension to $r>1$ of Theorem 3.1. in [AT17]. Together with Proposition \ref{Prop:E_r_BC-A_duality1}, Theorem \ref{The: page-r-Characterisations} also implies the following extension of Theorem 5.2. in [AT17].

\begin{Cor}
For a compact complex manifold $X$ of dimension $n$, the canonical bilinear pairing in $E_r$-Bott-Chern cohomology $E^{p,\, q}_{r,\, BC}(X) \times E^{n-p,\, n-q}_{r,\, BC}(X) \longrightarrow \C$, given by wedge product and integration, is well defined. Moreover, $X$ is a page-$(r-1)$-$\partial\bar\partial$-manifold if and only if the pairing is non-degenerate.
\end{Cor}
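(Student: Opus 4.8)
The plan is to realise the $E_r$-Bott-Chern self-pairing as the composition of the canonical map to $E_r$-Aeppli cohomology with the non-degenerate duality already established in Theorem \ref{The:E_r_BC-A_duality}. Write $j\colon E_{r,\,BC}^{\bullet,\,\bullet}(X)\to E_{r,\,A}^{\bullet,\,\bullet}(X)$ for the map induced by the identity: a $d$-closed form is in particular $E_r\overline{E}_r$-closed, and an $E_r\overline{E}_r$-exact form lies in $\mbox{Im}\,\partial+\mbox{Im}\,\bar\partial$ (by Lemma \ref{Lem:E_rE_r-bar-properties}), so $j$ is well defined. The Bott-Chern self-pairing then factors as
\[
E_{r,\,BC}^{p,\,q}(X)\times E_{r,\,BC}^{n-p,\,n-q}(X)\xrightarrow{\ \mathrm{id}\times j\ }E_{r,\,BC}^{p,\,q}(X)\times E_{r,\,A}^{n-p,\,n-q}(X)\longrightarrow\C,
\]
where the second arrow is the $E_r$-Bott-Chern/$E_r$-Aeppli pairing.

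First I would dispatch well-definedness. Since the second arrow is well defined by Proposition \ref{Prop:E_r_BC-A_duality1} and $j$ is well defined, the composite is well defined. Concretely, changing the $E_r$-Bott-Chern representative of $\beta$ by an $E_r\overline{E}_r$-exact form alters its Aeppli class only by an element of $\mbox{Im}\,\partial+\mbox{Im}\,\bar\partial$, hence not at all, while independence in the first slot is precisely Proposition \ref{Prop:E_r_BC-A_duality1}.

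For the equivalence I would use the non-degeneracy in Theorem \ref{The:E_r_BC-A_duality} to identify, up to the symmetry sign $(-1)^{p+q}$ coming from $\alpha\wedge\beta=(-1)^{p+q}\beta\wedge\alpha$, the adjoint $E_{r,\,BC}^{p,\,q}(X)\to (E_{r,\,BC}^{n-p,\,n-q}(X))^\ast$ of the self-pairing with the composite
\[
E_{r,\,BC}^{p,\,q}(X)\xrightarrow{\ j\ }E_{r,\,A}^{p,\,q}(X)\xrightarrow{\ \sim\ }(E_{r,\,BC}^{n-p,\,n-q}(X))^\ast,
\]
the right-hand isomorphism being furnished by the non-degenerate Bott-Chern/Aeppli duality pairing $E_{r,\,BC}^{n-p,\,n-q}(X)\times E_{r,\,A}^{p,\,q}(X)\to\C$. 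Consequently the self-pairing is non-degenerate in bidegree $(p,\,q)$ if and only if $j$ is injective in bidegrees $(p,\,q)$ and $(n-p,\,n-q)$; letting $(p,\,q)$ range over all bidegrees, non-degeneracy everywhere is equivalent to $j$ being globally injective, which is condition $(D)$ of Theorem \ref{The: page-r-Characterisations}. By the equivalence $(A)\Leftrightarrow(D)$ proved there, this holds exactly when $X$ is a page-$(r-1)$-$\partial\bar\partial$-manifold.

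The step I expect to require the most care is the identification of the adjoint with $j$: one must apply Theorem \ref{The:E_r_BC-A_duality} in the correct complementary bidegree, identifying $(E_{r,\,BC}^{n-p,\,n-q}(X))^\ast$ with $E_{r,\,A}^{p,\,q}(X)$ via $\int_X\beta\wedge\gamma$, and track the sign $(-1)^{p+q}$ so as to confirm that the adjoint is exactly $(-1)^{p+q}j$ rather than some twist. Once this bookkeeping is in place, non-degeneracy of the self-pairing becomes a statement purely about the injectivity of $j$, and the corollary follows from the already-proved characterisations.
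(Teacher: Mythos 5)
Your proposal is correct and follows essentially the route the paper intends: the corollary is stated there as an immediate consequence of Proposition \ref{Prop:E_r_BC-A_duality1} and Theorem \ref{The: page-r-Characterisations}, which is precisely your factorisation of the self-pairing through the natural map $E_{r,\,BC}^{\bullet,\,\bullet}(X)\to E_{r,\,A}^{\bullet,\,\bullet}(X)$ followed by the non-degenerate Bott-Chern/Aeppli duality of Theorem \ref{The:E_r_BC-A_duality}, reducing non-degeneracy of the self-pairing to injectivity of that map (condition $(D)$) and hence to the page-$(r-1)$-$\partial\bar\partial$-property via $(A)\Leftrightarrow(D)$. The sign $(-1)^{p+q}$ you track is indeed the right one, and it is harmless since it does not affect injectivity of the adjoint map.
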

\section{Relations with the $E_r$-sG manifolds}\label{section:relations_Er-sG}

Let $X$ be an $n$-dimensional compact complex manifold and let $\omega$ be a Gauduchon metric on $X$. This means that $\omega$ is a smooth, positive definite $(1,\,1)$-form on $X$ such that $\partial\bar\partial\omega^{n-1}=0$. Gauduchon metrics were introduced and proved to always exist in [Gau77]. It was noticed in [Pop19] that $\partial\omega^{n-1}$ is $E_r$-closed for every $r\in\N_{>0}$ and the following definition was introduced.

\begin{Def}([Pop19])\label{Def:E_r_sG} Let $r\in\N_{>0}$. A Gauduchon metric $\omega$ on $X$ is said to be {\bf $E_r$-sG} if $\partial\omega^{n-1}$ is {\bf $E_r$-exact}.

  We say that $X$ is an {\bf $E_r$-sG manifold} if an $E_r$-sG metric $\omega$ exists on $X$.

\end{Def}

The $E_1$-sG property coincides with the {\it strongly Gauduchon (sG)} property introduced in [Pop13]. The following implications and their analogues for $X$ are obvious:

\vspace{1ex}

\hspace{12ex} $\omega$ is $E_1$-sG $\implies$ $\omega$ is $E_2$-sG $\implies$ $\omega$ is $E_3$-sG

\vspace{1ex}

\noindent and so is the fact that, for bidegree reasons, the $E_r$-sG property coincides with the $E_3$-sG property for all $r\geq 4$.

The link between the page-$(r-1)$-$\partial\bar\partial$ and the $E_r$-sG properties is spelt out in the following

\begin{Prop}\label{Prop:page_r-1_Er-sG} Let $r\in\N_{>0}$ and let $X$ be a page-$(r-1)$-$\partial\bar\partial$-manifold. Then, every Gauduchon metric on $X$ is $E_r$-sG. In particular, $X$ is an $E_r$-sG manifold.

\end{Prop}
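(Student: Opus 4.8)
The plan is to realise $\partial\omega^{n-1}$ as a $d$-closed form that is manifestly $\bar E_r$-exact, and then to let the characterisation (E) of page-$(r-1)$-$\partial\bar\partial$-manifolds in Theorem \ref{The: page-r-Characterisations} do the real work by promoting $\bar E_r$-exactness to $E_r$-exactness. Concretely, I would fix a Gauduchon metric $\omega$ on $X$ and put $\alpha := \partial\omega^{n-1}$, a form of bidegree $(n,\,n-1)$. The goal is then simply to show $\alpha$ is $E_r$-exact, which is exactly the $E_r$-sG condition for $\omega$.

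The first step is to note that $\alpha$ is $d$-closed: indeed $\partial\alpha = \partial^2\omega^{n-1} = 0$, while $\bar\partial\alpha = \bar\partial\partial\omega^{n-1} = -\partial\bar\partial\omega^{n-1} = 0$, the last equality being precisely the Gauduchon condition $\partial\bar\partial\omega^{n-1}=0$. The second step is to observe that $\alpha$ is $\bar E_r$-exact. Since $\omega^{n-1}$ is a real form, $\bar\alpha = \overline{\partial\omega^{n-1}} = \bar\partial\omega^{n-1}$ is $\bar\partial$-exact, i.e. $E_1$-exact (in bidegree $(n-1,\,n)$); by the monotonicity of the exactness classes, namely the inclusions ${\mathscr C}_1^{p,\,q}(X)\subset\dots\subset{\mathscr C}_r^{p,\,q}(X)$ recorded in Proposition \ref{Prop:E_r-closed-exact_conditions}(iv), it is therefore $E_r$-exact, which by definition says that $\alpha$ is $\bar E_r$-exact.

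The final step invokes the hypothesis: as $X$ is a page-$(r-1)$-$\partial\bar\partial$-manifold, condition (E) of Theorem \ref{The: page-r-Characterisations} asserts that, for a $d$-closed $(p,q)$-form, the properties of being $\bar E_r$-exact and $E_r$-exact coincide. Applying this to the $d$-closed $(n,\,n-1)$-form $\alpha$ yields that $\partial\omega^{n-1}$ is $E_r$-exact, i.e. $\omega$ is $E_r$-sG; since $\omega$ was an arbitrary Gauduchon metric and such metrics always exist by [Gau77], $X$ is an $E_r$-sG manifold. I do not expect a genuine obstacle, because the substantive content lives entirely in Theorem \ref{The: page-r-Characterisations}, and once that is available the proposition is a short corollary. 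The only points demanding a little care are keeping the bidegree bookkeeping straight, so that (E) is applied to the correct form $\alpha=\partial\omega^{n-1}$ and not its conjugate, and confirming that the essentially trivial $\bar E_1$-exactness of $\partial\omega^{n-1}$ indeed propagates to $\bar E_r$-exactness through the increasing chain of exactness spaces.
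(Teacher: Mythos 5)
Your proposal is correct and follows essentially the same route as the paper's own proof: both establish that $\partial\omega^{n-1}$ is $d$-closed via the Gauduchon condition, observe that it is $\overline{E}_1$-exact (the paper phrases this as $\partial$-exactness, you via conjugation and the inclusion ${\mathscr C}_1\subset{\mathscr C}_r$, which is the same fact) and hence $\overline{E}_r$-exact, and then invoke the equivalence of exactness notions for $d$-closed forms in characterisation (E) of Theorem \ref{The: page-r-Characterisations} to conclude $E_r$-exactness.
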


\noindent {\it Proof.} Let $\omega$ be a Gauduchon metric on $X$. Then, $\partial\omega^{n-1}$ is $\bar\partial$-closed and $\partial$-closed, hence $d$-closed. It is also $\partial$-exact, or equivalently, $\overline{E}_1$-exact, hence also $\overline{E}_r$-exact.

Now, thanks to Theorem \ref{The: page-r-Characterisations}, the page-$(r-1)$-$\partial\bar\partial$-property of $X$ implies the equivalence between $\overline{E}_r$-exactness and $E_r$-exactness for $d$-closed pure-type forms. Consequently, $\partial\omega^{n-1}$ must be $E_r$-exact, so $\omega$ is an $E_r$-sG metric.  \hfill $\Box$

\vspace{3ex}

Let $X_{u,v}$ be a Calabi-Eckmann manifold, i.e. any of the complex manifolds $C^\infty$-diffeomorphic to $S^{2u+1}\times S^{2v+1}$ constructed by Calabi and Eckmann in [CE53]. Recall that the $X_{0,v}$'s and the $X_{u,0}$'s are Hopf manifolds. By [Pop14], $X_{u,v}$ does not admit any sG metric. However, we now prove the existence of $E_2$-sG metrics when $uv>0$.

\begin{Prop}\label{Calabi-Eckmann-prop}
Let $X_{u,v}$ be a Calabi-Eckmann manifold 
of complex dimension $\geq 2$. Let $u\leq v$. 
\begin{enumerate}
\item[\rm (i)] If $u>0$, then $X_{u,v}$ does not admit $sG$ metrics, 
but it is an $E_2$-sG manifold.
\item[\rm (ii)] If $u=0$, $X_{u,v}$ does not admit $E_r$-sG metrics for any $r$.
\end{enumerate}
\end{Prop}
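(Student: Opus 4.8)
The plan is to reduce everything to an explicit computation with the invariant forms coming from the Calabi--Eckmann construction $X_{u,v}=\big((\C^{u+1}\setminus\{0\})\times(\C^{v+1}\setminus\{0\})\big)/\C$, where $t\cdot(z,w)=(e^{t}z,\,e^{\tau t}w)$ for a fixed non-real $\tau$. Setting $\theta_1=\partial\log|z|^2$, $\theta_2=\partial\log|w|^2$, one checks that $\gamma:=\tau\theta_1-\theta_2$ is $\C$-invariant and horizontal for the action, hence descends to a global $(1,0)$-form on $X_{u,v}$ satisfying $\partial\gamma=0$ and $\bar\partial\gamma=\omega_2-\tau\,\omega_1$, where $\omega_1,\omega_2\geq 0$ are the pullbacks of the Fubini--Study forms of $\Proj^u,\Proj^v$ (so the $\omega_j$ are $d$-closed with $\omega_1^{u+1}=\omega_2^{v+1}=0$). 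I would then take the positive definite Hermitian metric $\omega:=\omega_1+\omega_2+i\,\gamma\wedge\bar\gamma$ (positivity holds because the three summands are positive on the $\Proj^u$, $\Proj^v$ and fiber directions respectively). Using $(i\gamma\wedge\bar\gamma)^2=0$ and $d\omega_j=0$ gives $\omega^{n-1}=\Omega^{n-1}+(n-1)\,\Omega^{n-2}\wedge(i\gamma\wedge\bar\gamma)$ with $\Omega=\omega_1+\omega_2$ and $n=u+v+1$; a short computation using $\omega_1^{u+1}=\omega_2^{v+1}=0$ yields $\partial\omega^{n-1}=c\,i\,\omega_1^u\wedge\omega_2^v\wedge\gamma$ for an explicit constant $c\neq 0$, and $\bar\partial\partial\omega^{n-1}=c\,i\,\omega_1^u\omega_2^v(\omega_2-\tau\omega_1)=0$, so $\omega$ is Gauduchon.

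For part (i) (so $u\geq 1$) the key is to exhibit an $E_2$-exactness witness for $\partial\omega^{n-1}$. I would set $x_0:=\omega_1^{u-1}\omega_2^v+\tau\,\omega_1^u\omega_2^{v-1}$, which makes sense precisely because $u\geq 1$; it is (up to scalar) the unique element of the truncated ring $\C[\omega_1,\omega_2]/(\omega_1^{u+1},\omega_2^{v+1})$ in degree $u+v-1$ annihilated by $\psi:=\omega_2-\tau\omega_1$. One then verifies $\bar\partial(x_0\,\gamma\wedge\bar\gamma)=x_0\,\psi\wedge\bar\gamma=0$ and $\partial(x_0\,\gamma\wedge\bar\gamma)=\pm(\tau-\bar\tau)\,\omega_1^u\omega_2^v\wedge\gamma$, using $x_0\psi=0$ together with $x_0\omega_1=\omega_1^u\omega_2^v$. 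Since $\tau$ is non-real, this exhibits $\omega_1^u\omega_2^v\wedge\gamma=\partial\zeta$ with $\zeta$ a $\bar\partial$-closed form, hence $\omega_1^u\omega_2^v\wedge\gamma$ (and so $\partial\omega^{n-1}$) is $E_2$-exact by Proposition~\ref{Prop:E_r-closed-exact_conditions}(iii) with $\xi=0$; thus $\omega$ is $E_2$-sG. For the non-existence of sG metrics I would argue directly by positivity: for any Hermitian $\omega$, integration by parts gives $\int_X\partial\omega^{n-1}\wedge\bar\gamma=\bar\tau\int_X\omega^{n-1}\wedge\omega_1-\int_X\omega^{n-1}\wedge\omega_2$, whose imaginary part equals $-(\operatorname{Im}\tau)\int_X\omega^{n-1}\wedge\omega_1\neq 0$ because $\omega^{n-1}\wedge\omega_1\geq 0$ is not identically zero; as $\bar\gamma$ is $\bar\partial$-closed, this forbids $\partial\omega^{n-1}$ from being $\bar\partial$-exact, recovering the fact that no Gauduchon metric is sG.

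For part (ii) (so $u=0$, a Hopf manifold with $\omega_1=0$) I would first recall that the subalgebra generated by $\omega_2,\gamma,\bar\gamma$ computes the Dolbeault cohomology, giving $h^{p,q}\neq 0$ only at $(0,0),(0,1),(n,n-1),(n,n)$, each one-dimensional; comparison with the Betti numbers of $S^1\times S^{2n-1}$ shows the Fr\"olicher spectral sequence degenerates at $E_1$. A filtration argument based on the chain of inclusions in Proposition~\ref{Prop:E_r-closed-exact_conditions}(iv) then shows that $E_1$-degeneration forces $\mathscr C_r^{p,q}=\operatorname{Im}\bar\partial$ for every $r$ and every bidegree, i.e. for a $d$-closed form $E_r$-exactness is equivalent to $\bar\partial$-exactness. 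Finally, for any Gauduchon $\omega$ the same integration by parts (now with $\omega_1=0$) gives $\int_X\partial\omega^{n-1}\wedge\bar\gamma=-\int_X\omega^{n-1}\wedge\omega_2\neq 0$, so $\partial\omega^{n-1}$ is never $\bar\partial$-exact, hence never $E_r$-exact for any $r$; thus $X_{0,v}$ carries no $E_r$-sG metric.

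The main obstacle is the explicit $E_2$-primitive in part (i): one must recognise that the correct $\zeta$ is built from the annihilator of $\psi=\omega_2-\tau\omega_1$ in the truncated cohomology ring of the base $\Proj^u\times\Proj^v$, and that this annihilator is nonzero in the required degree $u+v-1$ exactly when $u\geq 1$ — this is precisely the mechanism separating cases (i) and (ii). The remaining delicate point is the cohomological input of (ii), namely justifying that the invariant forms compute the Dolbeault cohomology of the Hopf manifolds so that one obtains $E_1$-degeneration; once that is granted, the collapse of all exactness notions together with the positivity computation completes the argument.
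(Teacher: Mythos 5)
Your proposal is correct in substance, but it takes a genuinely different route from the paper. The paper argues entirely on the cohomological side: it invokes Borel's computation of $H^{\bullet,\bullet}_{\bar\partial}(X_{u,v})$ together with the Neisendorfer--Taylor description of the $\partial$-action on the model, deduces that $E_2^{n,\,n-1}(X_{u,v})=0$ when $u>0$ (so that \emph{every} Gauduchon metric is $E_2$-sG), and for $u=0$ shows that $E_r^{n,\,n-1}$ stabilises to $H^{n,\,n-1}_{\bar\partial}\neq 0$, so an $E_r$-sG metric would already be sG; in both cases the non-existence of sG metrics is quoted from [Pop14]. You instead work with the explicit quotient model: you build a concrete Gauduchon metric $\omega=\omega_1+\omega_2+i\gamma\wedge\bar\gamma$, and for $u\geq 1$ you produce an explicit $\bar\partial$-closed potential $\zeta=x_0\,\gamma\wedge\bar\gamma$ with $\partial\zeta$ a non-zero multiple of $\omega_1^u\omega_2^v\wedge\gamma$; your annihilator mechanism is sound (with $\psi=\omega_2-\tau\omega_1$ one indeed gets $x_0\psi=0$ and $x_0\bar\psi=(\tau-\bar\tau)\,\omega_1^u\omega_2^v\neq 0$ precisely because $\tau\notin\R$), and Proposition \ref{Prop:E_r-closed-exact_conditions} (iii) with $r=2$, $\xi=0$ then gives $E_2$-exactness of $\partial\omega^{n-1}$. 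Your Stokes pairing against the $\bar\partial$-closed form $\bar\gamma$ replaces the citation of [Pop14] by a direct positivity argument, and your reduction of (ii) to $\bar\partial$-exactness via $E_1$-degeneration (the collapse ${\mathscr C}_r^{p,\,q}=\mbox{Im}\,\bar\partial$, which should be justified by the standard identification ${\mathscr C}_{r+1}^{p,\,q}/{\mathscr C}_r^{p,\,q}\cong\mbox{Im}\,d_r$ rather than by the inclusions of Proposition \ref{Prop:E_r-closed-exact_conditions} (iv) alone) is the same mechanism the paper uses, phrased globally instead of in the single relevant bidegree. What the paper's route buys is the stronger conclusion in case (i) that \emph{all} Gauduchon metrics on $X_{u,v}$ are $E_2$-sG, whereas you exhibit only one (which is all the statement requires); what your route buys is independence from [Pop14] and from the $\partial$-structure of the algebraic model, at the price of the differential-geometric verifications (descent and normalisation of $\gamma$ -- note your stated formula $\bar\partial\gamma=\omega_2-\tau\omega_1$ with $\omega_j\geq 0$ only holds after inserting the right factor of $i$ in the definition of $\gamma$, a harmless adjustment -- and positivity of $\omega$) and of still needing the classical Dolbeault computation for Hopf manifolds in case (ii).
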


\begin{proof}
By Borel's result in [Hir78, Appendix Two by A. Borel], we have
$$
H^{\bullet,\bullet}_{\bar\partial}(X_{u,v}) \cong \frac{{\mathbb C} [x_{1,1}]}{(x_{1,1}^{u+1})} \otimes \bigwedge (x_{v+1,v},x_{0,1}).
$$
In other words, a model for the Dolbeault cohomology of the Calabi-Eckmann manifold $X_{u,v}$ is provided by the CDGA (see [NT78])
$$
(V\langle x_{0,1},\, x_{1,1},\, y_{u+1,u},\, x_{v+1,v} \rangle, \ \bar\partial),
$$
with differential 
$$
\bar\partial x_{0,1}=0,\quad \bar\partial x_{1,1}=0,\quad 
\bar\partial y_{u+1,u}=x_{1,1}^{u+1}, \quad \bar\partial x_{v+1,v}=0.
$$
Thus, if $u>0$, we have a minimal model. 
(For $u=0$ we have only a cofibrant model in the sense of [NT78].)

Moreover, $\partial$ acts on generators as follows [NT78]:
$$
\partial x_{0,1}=x_{1,1}\ \  (\mbox{hence } \partial x_{1,1}=0),\quad 
\partial y_{u+1,u}=0, \quad \partial x_{v+1,v}=0.
$$
Next we determine the spaces $E_{r}^{n,\, n-1}$, for any $r\geq 1$,
where $n=u+v+1$. 

Let us first focus on the case (i), i.e. $u>0$. 
We need to consider the Dolbeault cohomology groups $H^{n-1,\,n-1}_{\bar\partial}(X_{u,v})$ and $H^{n,\,n-1}_{\bar\partial}(X_{u,v})$. They are given by
$$
H^{u+v,\,u+v}_{\bar\partial}(X_{u,v}) = \langle x_{0,1}\!\cdot x_{1,1}^{u-1}\!\cdot  x_{v+1,v} \rangle, \quad
H^{u+v+1,\,u+v}_{\bar\partial}(X_{u,v}) = \langle x_{1,1}^{u}\!\cdot  x_{v+1,v} \rangle.
$$
Now we consider
$$
H^{u+v,\,u+v}_{\bar\partial}(X_{u,v}) \stackrel{\partial}{\longrightarrow} H^{u+v+1,\,u+v}_{\bar\partial}(X_{u,v}) 
\longrightarrow 0.
$$
Since $\partial(x_{0,1}\!\cdot x_{1,1}^{u-1}\!\cdot  x_{v+1,v}) = 
\partial(x_{0,1})\!\cdot x_{1,1}^{u-1}\!\cdot  x_{v+1,v} 
= x_{1,1}^{u}\!\cdot  x_{v+1,v}$, the first map is surjective. 
Therefore, $E_2^{n,\,n-1}(X_{u,v})=0$. 
Thus, any Gauduchon metric on $X_{u,v}$ is an $E_2$-sG metric.

Next we focus on the case (ii), i.e. $u=0$, so $v\geq 1$. 
In this case $H^{v+1,v}_{\bar\partial}(X_{0,v})=\langle x_{v+1,v} \rangle$. Notice that the Dolbeault cohomology groups
$H^{v-r+1,v+r-1}_{\bar\partial}(X_{0,v})$ are all zero for every $r\geq 1$. Therefore,
$E^{v-r+1,v+r-1}_r(X_{0,v})=\{0\}$ for every $r\geq 1$. Meanwhile, from 
$$
\{0\}=E^{v-r+1,v+r-1}_r(X_{0,v}) \stackrel{d_r}{\longrightarrow} E^{v+1,v}_r(X_{0,v}) 
\longrightarrow 0,
$$
we get $E^{v+1,v}_r(X_{0,v})=H^{v+1,v}_{\bar\partial}(X_{0,v})$ for every $r\geq 2$. 
So, the existence of an $E_r$-sG metric on $X_{0,v}$ would imply the existence of an sG metric, which would contradict [Pop14].

\end{proof}


As a by-product of Borel's description of the Dolbeault cohomology of the Calabi-Eckmann manifolds $X_{u,v}$ used in the above proof, one gets that the Fr\"olicher spectral sequence of $X_{u,v}$ satisfies $E_1\ne E_2= E_{\infty}$ when $u>0$, whereas it degenerates at $E_1$ when $u=0$. This latter fact implies that no Hopf manifold $X_{0,v}$ can have a pure De Rham cohomology, hence cannot be a page-$r$-$\partial\bar\partial$-manifold for any $r$. Indeed, if the De Rham cohomology were pure, then $X_{0,v}$ would be a $\partial\bar\partial$-manifold, a fact that is trivial to contradict.

By the previous result, all the Calabi-Eckmann manifolds that are not Hopf manifolds are $E_2$-sG manifolds. However, the next observation shows that they are not page-$r$-$\partial\bar\partial$-manifolds for any $r\in\N$.

\begin{Lem}\label{Lem:C-E_not-DR-pure} Let $u,v\geq 0$ and let $X=S^{2u+1}\times S^{2v+1}$ be equipped with any of the Calabi-Eckmann complex structures. Assume that either $u\neq v$ or $u=v=1$. Then, the De Rham cohomology of $X$ is not pure.
\end{Lem}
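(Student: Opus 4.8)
The plan is to use the standard criterion that the De Rham cohomology of a compact complex manifold is \emph{pure} -- i.e. each $H^k_{DR}(X,\C)$ underlies a weight-$k$ Hodge structure, with $\overline{H^{p,q}}=H^{q,p}$ -- exactly when, in the decomposition of the form double complex into indecomposables, every odd-length zigzag is a dot. From this I will extract two obstructions, to be applied degree by degree: \textbf{(O1)} a pure Hodge structure of odd weight has even dimension, so an odd Betti number in odd degree already forbids purity; \textbf{(O2)} if $0\ne c\in H^k_{DR}$ is a real class admitting a $d$-closed representative of pure type $(p,q)$ with $p\ne q$, then purity fails, since reality would give $c\in H^{p,q}\cap\overline{H^{p,q}}=H^{p,q}\cap H^{q,p}=0$.

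First I would record, from Borel's computation recalled above, that $H^\bullet_{DR}(X_{u,v},\C)\cong\Lambda(\alpha,\beta)$ with $\deg\alpha=2u+1$ and $\deg\beta=2v+1$; hence $b_{2u+1}(X)=1$ whenever $2u+1\ne 2v+1$. For the case $u\ne v$ the conclusion is then immediate: the degree $2u+1$ is odd and $b_{2u+1}(X)=1$ is odd, so by \textbf{(O1)} the space $H^{2u+1}_{DR}$ cannot carry a weight-$(2u+1)$ Hodge structure and the De Rham cohomology is impure. This settles all $u\ne v$, the Hopf manifolds $u=0$ included, and is consistent with the fact noted above that $X_{0,v}$ has $b_1$ odd.

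The remaining case $u=v=1$ is the crux, since there $X=S^3\times S^3$, $b_3=2$ is even, and the parity obstruction is vacuous; I must instead analyse the actual Hodge filtration on $H^3_{DR}$. The main difficulty is that the Dolbeault model of the excerpt does not record complex conjugation, so I would first replace it by a faithful conjugation-symmetric model of the full form double complex, generated by the global fibre $(1,0)$-form $\theta$, its conjugate $\bar\theta$, and the two pulled-back Fubini--Study classes $h_1,h_2$ on $\Proj^1\times\Proj^1$, with $\partial\theta=0$, $\bar\partial\theta=h_1+\tau h_2$ and the conjugate relations. In this model the $d$-closed $(2,1)$-forms are one-dimensional, spanned by $\gamma:=\theta(h_1-\tau h_2)$, so that $F^2H^3=\langle[\gamma]\rangle$ and, by conjugation, $\overline{F}^2H^3=\langle[\bar\gamma]\rangle$. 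I would then isolate the indecomposable summand carrying $H^3$ and show that it is an odd zigzag of \emph{length three} (built from $\gamma$, $\bar\gamma$ and $\theta\bar\theta$) rather than a pair of dots. Equivalently, and this is the step to be verified with care, $\gamma$ and $\bar\gamma$ are cohomologous, so the nonzero class $[\gamma]$ lies in $F^2H^3\cap\overline{F}^2H^3$; by \textbf{(O2)} this forces $H^3_{DR}$, hence $X$, to be impure. The heart of the matter -- and the main obstacle -- is exactly this last verification: the failure of purity for $u=v=1$ is invisible both at the level of Dolbeault cohomology and at the level of Hodge numbers (which are conjugation-symmetric here), and surfaces only through the real structure, so the work lies in pinning down the conjugation-symmetric model and confirming that the relevant summand is genuinely a length-three zigzag, a dot decomposition yielding purity instead.
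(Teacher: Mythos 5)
Your treatment of the case $u\neq v$ is correct, complete, and genuinely different from the paper's. The paper argues uniformly: Borel's theorem gives $E_2$-degeneration, so purity would make $X$ a page-$1$-$\partial\bar\partial$-manifold, which by [PSU20, Thm 3.1] would force $h_{BC}(X)=h_{\bar\partial}(X)$; this is then contradicted by the external computations $h_{BC}=4u+5>4u+4=h_{\bar\partial}$ of [Ste20, Cor. H] (for $u\neq v$) and of [TT17] (for $u=v=1$). Your parity obstruction \textbf{(O1)} -- $b_{2u+1}=1$ is odd in odd degree, while purity plus the real structure forces even odd-degree Betti numbers -- replaces all of this machinery for $u\neq v$, at the cost of saying nothing when $u=v$.

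The gap is in the case $u=v=1$, and it is more serious than the ``verification to be done with care'' that you flag: the step you defer is \emph{false} in the generality in which you set it up. In your model, write $\theta=\alpha_1+\tau\alpha_2$ with $\alpha_1,\alpha_2$ real, $d\alpha_j=h_j$ (so that $\partial\theta=0$, $\bar\partial\theta=h_1+\tau h_2$, consistently with your structure equations). Then $\gamma=\theta\wedge(h_1-\tau h_2)=\alpha_1\wedge h_1-\tau^2\,\alpha_2\wedge h_2+\tau\,d(\alpha_1\wedge\alpha_2)$, so in $H^3_{DR}$ one gets $[\gamma]=[\alpha_1 h_1]-\tau^2[\alpha_2 h_2]$ and, by conjugation, $[\bar\gamma]=[\alpha_1 h_1]-\bar\tau^2[\alpha_2 h_2]$, where $[\alpha_1h_1],[\alpha_2h_2]$ is the real basis of $H^3_{DR}$ given by the two sphere volumes. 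Hence $\gamma$ and $\bar\gamma$ are cohomologous if and only if $\tau^2=\bar\tau^2$, i.e.\ if and only if the coefficient $\tau$ is purely imaginary. For any other admissible $\tau$ the classes $[\gamma],[\bar\gamma]$ are independent; since the $E_\infty$-dimensions give $\dim F^2H^3=\dim\bar F^2H^3=1$, one then has $F^2H^3=\langle[\gamma]\rangle$, $\bar F^2H^3=\langle[\bar\gamma]\rangle$, these are opposed, and your model's $H^3$ is \emph{pure} (correspondingly, $\partial\bar\partial(\theta\wedge\bar\theta)=(\tau+\bar\tau)h_1h_2\neq 0$, so $\theta\bar\theta$ generates a square rather than your hoped-for length-three zigzag, and $\gamma,\bar\gamma$ become dots). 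So the deferred step cannot be closed by formal manipulation of the structure equations: one must pin down the precise coefficient produced by the Calabi-Eckmann structure under consideration (it is purely imaginary for the standard one, which is exactly the case covered by the computation of [TT17] that the paper invokes), and the conclusion is genuinely sensitive to it. As it stands, your proposal proves the lemma for $u\neq v$ but not for $u=v=1$, and any completion of it must confront this parameter dependence explicitly rather than treat it as a routine check.
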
  

\begin{proof}
It is a well-known consequence of the result in [Hir78, Appendix Two by A. Borel] recalled above that, for any Calabi-Eckmann manifold, the Fr\"olicher spectral sequence degenerates at $E_2$. Thus, if the De Rham cohomology were pure, $X$ would be page-$r$-$\partial\bar\partial$. By [PSU20, Thm 3.1], this would imply $h_{BC}(X)=h_{\bar\partial}(X)$, where $h_{BC}(X):=\sum_{p,\, q}h^{p,\, q}_{BC}(X)$ and $h_{\bar\partial}(X):=\sum_{p,\, q}h^{p,\, q}_{\bar\partial}(X)$ are the total Bott-Chern and Hodge numbers. For $u\neq v$, the Bott-Chern numbers were computed in [Ste20, Cor. H], yielding $h_{BC}(X)=4u+5>4u+4=h_{\delbar}(X)$. A calculation in [TT17] implies that this continues to hold for $u=v=1$.

\end{proof}
\begin{Rem}

  This lemma is very likely to also hold for arbitrary $u=v>1$. From the proof of [Ste20, Cor H] it follows that to settle this issue, it suffices to determine whether $h^{v+1,u+1}_{BC}=1$ for the higher-dimensional Calabi-Eckmann manifolds with $u=v$. Since we are only interested in the existence of a counterexample here, we do not pursue this issue.
\end{Rem}

\section{Further applications to H-S, SKT and sGG manifolds}\label{section:applications_HS-SKT-sGG}

We start by recalling the following definitions (see [ST10] for the notion of {\it Hermitian-symplectic (H-S) metric and manifold}; [Pop13] for the notion of {\it strongly Gauduchon (sG) metric and manifold}; [PU18] for the notion of {\it sGG manifold}).

\begin{Def}\label{Def:HS-SKT-sGG} Let $X$ be a compact complex manifold with $\mbox{dim}_\C X=n$.

  (a)\, A Hermitian metric $\omega$ on $X$ is said to be

  \vspace{1ex}

  (i)\, {\bf SKT} if $\partial\bar\partial\omega=0$;

  \vspace{1ex}

  (ii)\, {\bf Hermitian-symplectic (H-S)} if there exists a form $\rho^{0,\,2}\in C^\infty_{0,\,2}(X,\,\C)$ such that $$d(\overline{\rho^{0,\,2}} + \omega + \rho^{0,\,2})=0;$$

\vspace{1ex}

  (iii)\, {\bf strongly Gauduchon (sG)} if $\partial\omega^{n-1}$ is $\bar\partial$-exact.

\vspace{1ex}

(b)\, If $X$ admits an SKT, or an H-S, or an sG metric $\omega$, then $X$ is said to be an {\bf SKT manifold}, resp. an {\bf H-S manifold}, resp. an {\bf sG manifold}.

\vspace{1ex}

(c)\, If every Gauduchon metric on $X$ is strongly Gauduchon (sG), then $X$ is said to be an {\bf sGG manifold}.

\end{Def}

\vspace{2ex}

The notion of sG metric is the analogue in bidegree $(n-1,\,n-1)$ of the notion of H-S metric. Indeed, by [Pop13, Proposition 4.2], a metric $\omega$ is sG if and only if there exists a form $\Omega^{n-2,\,n}\in C^\infty_{n-2,\,n}(X,\,\C)$ such that $$d(\overline{\Omega^{n-2,\,n}} + \omega^{n-1} + \Omega^{n-2,\,n})=0.$$

\vspace{1ex}

Our first observation is that the {\it higher-page Aeppli cohomologies} introduced in this paper provide the natural cohomological framework for the study of the above metric notions. 

\begin{Prop}\label{Prop:sG-HS_E_2A} Let $X$ be a compact complex manifold with $\mbox{dim}_\C X=n$ and let $\omega$ be a Hermitian metric on $X$.

  \vspace{1ex}

  (i)\, The metric $\omega$ is {\bf strongly Gauduchon (sG)} if and only if $\omega^{n-1}$ is {\bf $E_2\overline{E}_2$-closed}.

  In particular, in this case, $\omega^{n-1}$ induces an $E_2$-Aeppli cohomology class $\{\omega^{n-1}\}_{E_{2,A}}\in E_{2,A}^{n-1,\,n-1}(X)$.

  \vspace{1ex}

  (ii)\, The metric $\omega$ is {\bf Hermitian-symplectic (H-S)} if and only if $\omega$ is {\bf $E_3\overline{E}_3$-closed}.

  In particular, in this case, $\omega$ induces an $E_3$-Aeppli cohomology class $\{\omega\}_{E_{3,A}}\in E_{3,A}^{1,\,1}(X)$.

  \vspace{1ex}

  When $n=3$, $\omega$ is {\bf Hermitian-symplectic (H-S)} if and only if $\omega$ is {\bf $E_2\overline{E}_2$-closed}.

In particular, in this case, $\omega$ induces an $E_2$-Aeppli cohomology class $\{\omega\}_{E_{2,A}}\in E_{2,A}^{1,\,1}(X)$.

\end{Prop}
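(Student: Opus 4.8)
The plan is to translate each metric condition into the $d$-closedness of a single mixed-type form, decompose that condition into its bidegree components, and match the resulting equations against the two towers of Definition \ref{Def:E_rE_r-bar}. The recurring device is that $\omega$, and hence $\omega^{n-1}$, is real, so that conjugation sends the ``$\partial$-tower'' to the ``$\bar\partial$-tower''; this is precisely what lets the two towers defining $E_r\overline{E}_r$-closedness collapse to a single exactness statement. Part (i) will then be essentially formal, while the genuine content sits in part (ii) and its $n=3$ refinement.

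For (i), recall that sG means $\partial\omega^{n-1}$ is $\bar\partial$-exact. First I would note that for the $(n-1,n-1)$-form $\omega^{n-1}$, $E_2\overline{E}_2$-closedness is exactly the pair of equations $\partial\omega^{n-1}=\bar\partial\eta_1$ and $\bar\partial\omega^{n-1}=\partial\rho_1$ (each tower has a single equation when $r=2$). Conjugating the first and using $\overline{\omega^{n-1}}=\omega^{n-1}$ gives $\bar\partial\omega^{n-1}=\partial\overline{\eta_1}$, so $\rho_1:=\overline{\eta_1}$, of the correct bidegree $(n-2,n)$, solves the second automatically. Thus both equations reduce to $\partial\omega^{n-1}$ being $\bar\partial$-exact, i.e. to sG. The ``in particular'' clause is then immediate from Definition \ref{Def:E_r-BC_E_r-A}, since an $E_2\overline{E}_2$-closed form represents a class in $E_{2,A}^{n-1,n-1}(X)$. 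I would also observe that for bidegree $(n-1,n-1)$ the next rung would demand $\partial\eta_1\in C^\infty_{n+1,n-2}(X)=0$, so $E_2\overline{E}_2$-closedness already agrees with $E_r\overline{E}_r$-closedness for all $r\geq 2$.

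For (ii), I would expand $d(\overline{\rho^{0,2}}+\omega+\rho^{0,2})=0$ into its four bidegree pieces, namely $\partial\overline{\rho^{0,2}}=0$, $\partial\omega+\bar\partial\overline{\rho^{0,2}}=0$, $\bar\partial\omega+\partial\rho^{0,2}=0$ and $\bar\partial\rho^{0,2}=0$. On the other side, for the $(1,1)$-form $\omega$ the $E_3\overline{E}_3$ towers force $\eta_2=\rho_2=0$ for degree reasons ($\partial\eta_1\in C^\infty_{3,0}(X)$ must equal $\bar\partial\eta_2$ with $\eta_2\in C^\infty_{3,-1}(X)=0$, and symmetrically for $\rho$), so $E_3\overline{E}_3$-closedness reduces to $\partial\omega=\bar\partial\eta_1$, $\partial\eta_1=0$, $\bar\partial\omega=\partial\rho_1$, $\bar\partial\rho_1=0$. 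Matching the two lists via $\eta_1=-\overline{\rho^{0,2}}$, $\rho_1=-\rho^{0,2}$ proves the forward direction; conversely, setting $\rho^{0,2}:=-\rho_1$, reality supplies the $(2,0)$- and $(3,0)$-equations as conjugates of the $(0,2)$- and $(0,3)$-ones, so only $\rho_1$ is actually needed. This yields H-S $\iff$ $E_3\overline{E}_3$-closed, with the class in $E_{3,A}^{1,1}(X)$ by Definition \ref{Def:E_r-BC_E_r-A}.

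The delicate point, which I expect to be the main obstacle, is the refinement when $n=3$: there one must upgrade $E_2\overline{E}_2$-closedness to $E_3\overline{E}_3$-closedness for the $(1,1)$-form $\omega$, i.e. show that an $\eta_1\in C^\infty_{2,0}(X)$ with $\bar\partial\eta_1=\partial\omega$ automatically satisfies $\partial\eta_1=0$ (the condition $\bar\partial\rho_1=0$ then following by conjugation, taking $\rho_1:=\overline{\eta_1}$). The key observation is that $\gamma:=\partial\eta_1\in C^\infty_{3,0}(X)$ is $\bar\partial$-closed, since $\bar\partial\partial\eta_1=-\partial\bar\partial\eta_1=-\partial\partial\omega=0$; hence $\overline{\gamma}\in C^\infty_{0,3}(X)$ is $\partial$-closed. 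A short Stokes computation then gives $\gamma\wedge\overline{\gamma}=d(\eta_1\wedge\overline{\gamma})$, the remaining terms vanishing by type in complex dimension $3$, so that $\int_X\gamma\wedge\overline{\gamma}=0$. Writing $\gamma=f\,dz^1\wedge dz^2\wedge dz^3$ locally, $\gamma\wedge\overline{\gamma}$ is a fixed nonzero constant times $|f|^2\,dV$, and positivity forces $\gamma=\partial\eta_1=0$. Thus in dimension $3$ every $E_2\overline{E}_2$-closed $(1,1)$-form is $E_3\overline{E}_3$-closed, and combining with (ii) gives H-S $\iff$ $E_2\overline{E}_2$-closed, the class now in $E_{2,A}^{1,1}(X)$. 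The care needed elsewhere is routine bookkeeping of bidegrees and signs in the integrations by parts, together with verifying that the Stokes step above discards only terms vanishing for type reasons.
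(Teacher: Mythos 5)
Your proposal is correct and follows essentially the same route as the paper: part (i) and part (ii) are the same bidegree-splitting plus conjugation argument, and your $n=3$ step (Stokes applied to $d(\eta_1\wedge\bar\gamma)$, then pointwise positivity of the pairing of the $(3,0)$-form $\gamma=\partial\eta_1$ with its conjugate) is the paper's argument, which phrases the positivity via $\star\bar\partial\rho^{0,2}=i\,\bar\partial\rho^{0,2}$ instead of local coordinates. One harmless imprecision: locally $\gamma\wedge\bar\gamma=-8i\,|f|^2\,dV$, so the pointwise non-negative quantity is $i\,\gamma\wedge\bar\gamma$ rather than $\gamma\wedge\bar\gamma$ itself; since the constant is a fixed nonzero scalar, your conclusion $\gamma\equiv 0$ from the vanishing integral still stands.
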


\noindent {\it Proof.} (i)\, The sG condition on $\omega$ is defined by requiring $\partial\omega^{n-1}$ to be $\bar\partial$-exact. By conjugation, this is equivalent to $\bar\partial\omega^{n-1}$ being $\partial$-exact. These two conditions express the fact that $\omega$ satisfies the two towers of $1$ equation each (hence corresponding to the case $r=2$) in (i) of Definition \ref{Def:E_rE_r-bar}, so they are equivalent to $\omega^{n-1}$ being {\bf $E_2\overline{E}_2$-closed}.

(ii)\, The H-S condition on $\omega$ is equivalent to the existence of a form $\rho^{2,\,0}\in C^\infty_{2,\,0}(X,\,\C)$ such that $\partial\omega = -\bar\partial\rho^{2,\,0}$ and $\partial\rho^{2,\,0}=0$. By conjugation, these conditions are equivalent to the existence of a form $\rho^{0,\,2}\in C^\infty_{0,\,2}(X,\,\C)$ such that $\bar\partial\omega = -\partial\rho^{0,\,2}$ and $\bar\partial\rho^{0,\,2}=0$. These four conditions express the fact that $\omega$ satisfies the two towers of $2$ equations each (hence corresponding to the case $r=3$) in (i) of Definition \ref{Def:E_rE_r-bar}, or equivalently that $\omega$ is $E_3\overline{E}_3$-closed.

When $n=3$, the condition $\partial\rho^{2,\,0}=0$ is automatic since $$i\partial\rho^{2,\,0}\wedge\bar\partial\rho^{0,\,2} = \partial\rho^{2,\,0}\wedge\star\bar\partial\rho^{0,\,2} = |\partial\rho^{2,\,0}|^2_\omega\,dV_\omega\geq 0 \hspace{3ex} \mbox{and} \hspace{3ex} \int\limits_Xi\partial\rho^{2,\,0}\wedge\bar\partial\rho^{0,\,2} = \int\limits_X\partial(i\rho^{2,\,0}\wedge\bar\partial\rho^{0,\,2}) =0,$$ where $\star = \star_\omega$ is the Hodge star operator induced by $\omega$ and the primitivity (trivial for bidegree reasons) of the $(0,\,3)$-form $\bar\partial\rho^{0,\,2}$ was used to infer that $\star\bar\partial\rho^{0,\,2} = i\bar\partial\rho^{0,\,2}$. In fact, the last identity follows from the general formula: \begin{eqnarray*}\label{eqn:prim-form-star-formula-gen}\star\, v = (-1)^{k(k+1)/2}\, i^{p-q}\, \frac{\omega^{n-p-q}\wedge v}{(n-p-q)!}, \hspace{2ex} \mbox{where}\,\, k:=p+q,\end{eqnarray*} satisfied by any {\it primitive} form $v$ of any bidegree $(p, \, q)$ (see e.g. [Voi02, Proposition 6.29, p. 150]).   \hfill $\Box$

\vspace{3ex}

Using the real structure of the spaces $E_{r,\, A}^{1,\,1}(X)$, we will now consider their {\it real} versions $E_{r,\, A}^{1,\,1}(X,\,\R)$ and define {\it cones} of higher Aeppli cohomology classes representable by the kinds of metrics discussed above.

\begin{Def}\label{Def:E_rA_pos-cones} Let $X$ be a compact complex manifold with $\mbox{dim}_\C X=n$. We define the:

  \vspace{1ex}

 (i)\, {\bf strongly Gauduchon (sG) cone} of $X$ as the set: $${\cal SG}_X:=\bigg\{\{\omega^{n-1}\}_{E_{2,A}}\in E_{2,A}^{n-1,\,n-1}(X,\,\R)\,\mid\,\omega \hspace{1ex}\mbox{is an sG metric on}\hspace{1ex} X\bigg\}\subset E_{2,A}^{n-1,\,n-1}(X,\,\R);$$

 (ii)\, {\bf Hermitian-symplectic (H-S) cone} of $X$ as the set: $${\cal HS}_X:=\bigg\{\{\omega\}_{E_{3,A}}\in E_{3,A}^{1,\,1}(X,\,\R)\,\mid\,\omega \hspace{1ex}\mbox{is an H-S metric on}\hspace{1ex} X\bigg\}\subset E_{3,A}^{1,\,1}(X,\,\R);$$

 (iii)\, {\bf SKT cone} of $X$ as the set: $${\cal SKT}_X:=\bigg\{\{\omega\}_{E_{1,A}}\in E_{1,A}^{1,\,1}(X,\,\R)\,\mid\,\omega \hspace{1ex}\mbox{is an SKT metric on}\hspace{1ex} X\bigg\}\subset E_{1,A}^{1,\,1}(X,\,\R) = H_A^{1,\,1}(X,\,\R).$$

\end{Def}

The strongly Gauduchon cone was defined in a different way in [Pop15] and in [PU18], although it was denoted in the same way. We keep the same notation for the object defined above since it will be seen in Corollary \ref{Cor:cone-equalities} to play an identical role in characterising sGG manifolds. Meanwhile, recall that the {\it Gauduchon cone} ${\cal G}_X\subset E_{1,A}^{n-1,\,n-1}(X,\,\R) = H_A^{n-1,\,n-1}(X,\,\R)$ of $X$ was introduced in an analogous way in [Pop15] (i.e. as the set of the Aeppli cohomology classes of all the $\omega^{n-1}$ induced by Gauduchon metrics $\omega$) and shown there to be an {\it open convex cone} in $H_A^{n-1,\,n-1}(X,\,\R)$. The same conclusion holds for the new cones introduced above.

\begin{Lem}\label{Lem:E_rA_pos-cones} 
The sG, H-S and SKT cones of $X$ are {\bf open convex cones} in their respective cohomology vector spaces.
\end{Lem}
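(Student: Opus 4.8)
The plan is to establish the three properties --- being a cone, convexity, and openness --- in a unified way, exploiting the characterisations from Proposition \ref{Prop:sG-HS_E_2A}. The key structural observation is that each of the three cones consists precisely of the classes of \emph{positive} forms (namely $\omega^{n-1}$ in the sG case, $\omega$ in the H-S and SKT cases) subject to a \emph{linear} closedness condition ($E_2\overline{E}_2$-closedness, $E_3\overline{E}_3$-closedness, and $\partial\bar\partial$-closedness, respectively) which, by the definition of the higher-page Aeppli cohomologies, is exactly the condition characterising representatives of classes in the relevant space $E_{r,\,A}^{\bullet,\,\bullet}(X,\,\R)$. That each such closedness condition defines a \emph{vector subspace} of forms is already recorded before Lemma \ref{Lem:E_rE_r-bar-properties}. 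Throughout, positivity means positive definiteness of $(1,\,1)$-forms in the H-S and SKT cases, and positive definiteness of $(n-1,\,n-1)$-forms in the sG case, the latter being linked to honest Hermitian metrics through Michelsohn's theorem [Mic82] on the bijectivity of $\gamma\mapsto\gamma^{n-1}$ between positive $(1,\,1)$-forms and positive $(n-1,\,n-1)$-forms.

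First I would dispatch the cone property, which is immediate from homogeneity: replacing $\omega$ by $t\omega$ with $t>0$ multiplies the relevant class by $t$ (H-S, SKT) or by $t^{n-1}$ (sG), and as $t$ ranges over $(0,\infty)$ so do these factors; the defining closedness condition and the positivity are clearly preserved. For convexity, given two special metrics $\omega_0,\omega_1$ of the same type representing classes $c_0,c_1$ in the cone and $t\in[0,1]$, the form $(1-t)\omega_0+t\omega_1$ (resp. $(1-t)\omega_0^{n-1}+t\omega_1^{n-1}$) is again positive, by convexity of the positive cone, and still satisfies the linear closedness condition. In the H-S and SKT cases this form is directly the sought metric; in the sG case Michelsohn's theorem produces the unique Hermitian $\gamma$ with $\gamma^{n-1}$ equal to this convex combination of $(n-1,\,n-1)$-forms, and $\gamma$ is then sG by Proposition \ref{Prop:sG-HS_E_2A}. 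In all cases the class of the resulting metric is $(1-t)c_0+tc_1$.

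The main work, and the main obstacle, is openness. Fix a class $c$ in one of the cones, represented by a special metric $\omega$, and let $\alpha$ be an arbitrary real class in the same Aeppli cohomology space. Using the Hodge isomorphism of Corollary and Definition \ref{Cor-Def:E_r-A_Hodge-isom} together with the real structure, I would represent $\alpha$ by a real form $\rho$ satisfying the same closedness condition as $\omega$ (resp. $\omega^{n-1}$); since the cohomology space is finite-dimensional, writing $\rho$ through a fixed basis of harmonic representatives shows that $\rho\to 0$ in $C^0$ as $\alpha\to 0$. Hence, for $\alpha$ small enough, the form $\omega+\rho$ (resp. $\omega^{n-1}+\rho$) is still positive, positivity being an open condition, and it remains closed of the required type by linearity. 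In the H-S and SKT cases this form is itself a special metric representing $c+\alpha$; in the sG case Michelsohn's theorem extracts from the positive $(n-1,\,n-1)$-form $\omega^{n-1}+\rho$ a Hermitian metric $\gamma$ with $\gamma^{n-1}=\omega^{n-1}+\rho$, which is sG by Proposition \ref{Prop:sG-HS_E_2A} and whose class is $c+\alpha$. Thus a full neighbourhood of $c$ lies in the cone.

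I expect the only genuinely delicate points to occur in the sG case of the convexity and openness arguments, where passing from an averaged or perturbed positive $(n-1,\,n-1)$-form back to an honest Hermitian metric is non-linear and relies essentially on Michelsohn's root construction; the $(1,\,1)$-form cases (H-S and SKT) require only that positive definiteness is open and stable under convex combinations. The whole scheme mirrors the argument establishing that the Gauduchon cone is an open convex cone in [Pop15], to which the present statement reduces in spirit when the closedness condition is weakened to $\partial\bar\partial$-closedness.
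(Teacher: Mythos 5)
Your proposal is correct and follows essentially the same route as the paper's proof: the cone and convexity properties come from the linearity of the relevant closedness conditions and of taking Aeppli classes, and openness comes from the Hodge isomorphism of Corollary and Definition \ref{Cor-Def:E_r-A_Hodge-isom}, the real structure of the harmonic space, and the fact that (by finite-dimensionality) small classes have $C^0$-small harmonic representatives, so adding them to the given metric preserves positivity. The only difference is that you make explicit the passage back from a positive $(n-1,n-1)$-form to a Hermitian metric via Michelsohn's root construction in the sG case, a genuine subtlety that the paper's proof leaves implicit by spelling out only the H-S case and declaring the sG and SKT cases ``analogous''.
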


\noindent {\it Proof.} Let us spell out the arguments for ${\cal HS}_X$. They run analogously for ${\cal SG}_X$ and ${\cal SKT}_X$.

The fact that ${\cal HS}_X$ is a convex cone as a subset of $E_{3,A}^{1,\,1}(X,\,\R)$ is obvious: linear combinations with positive coefficients of H-S metrics are H-S metrics and taking the $E_{3,A}$-cohomology class is a linear operation.

To see that ${\cal HS}_X$ is open in $E_{3,A}^{1,\,1}(X,\,\R)$, we will use the Hodge theory for the higher Aeppli cohomology developed in $\S.$\ref{section:higher-page_BC-A}. Let $\{\omega\}_{E_{3,A}}\in{\cal HS}_X$, where $\omega$ is an H-S metric on $X$. Since every class in $E_{3,A}^{1,\,1}(X,\,\R)$ is representable by a unique $E_3$-Aeppli harmonic (w.r.t. $\omega$) form, by Corollary and Definition \ref{Cor-Def:E_r-A_Hodge-isom}, the class $\{\omega + \alpha_h\}_{E_{3,A}}$ still lies in ${\cal HS}_X$ for every $E_3$-Aeppli harmonic form $\alpha_h\in C^\infty_{1,\,1}(X,\,\R)$ whose $C^0$-norm induced by $\omega$ is small enough. Note that, by Corollary and Definition \ref{Cor-Def:E_r-A_Hodge-isom}, the space ${\cal H}_{3,A}^{1,\,1}(X)$ has a real structure as well.  \hfill $\Box$

\vspace{3ex}

We now take advantage of the fact that all the higher-page Aeppli cohomologies are defined using the same space of {\it exact} forms, which leads to the inclusions (cf. (\ref{eqn:A_seq-inclusions})): $$\dots\subset E_{r+1,A}^{1,\,1}(X)\subset E_{r,\, A}^{1,\,1}(X)\subset\dots\subset E_{1,A}^{1,\,1}(X),$$ to deduce the following

\begin{Lem}\label{Lem:cone-intersections} The Gauduchon, sG, H-S and SKT cones of $X$ are related to one another as follows:  \begin{eqnarray*}{\cal SG}_X = {\cal G}_X\cap E_{2,A}^{n-1,\,n-1}(X,\,\R) \hspace{2ex} \mbox{and} \hspace{2ex} {\cal HS}_X = {\cal SKT}_X\cap E_{3,A}^{1,\,1}(X,\,\R).\end{eqnarray*}

\end{Lem}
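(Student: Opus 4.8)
The plan is to reduce both equalities to a single structural observation about the inclusions (\ref{eqn:A_seq-inclusions}) combined with Proposition \ref{Prop:sG-HS_E_2A}. First I would record the precise description of the image of the identity-induced injection $E_{r,\,A}^{p,\,q}(X)\hookrightarrow H_A^{p,\,q}(X)$: since both spaces are quotients by the same subspace $\mbox{Im}\,\partial+\mbox{Im}\,\bar\partial$, with the numerator of $E_{r,\,A}^{p,\,q}$ (the $E_r\overline{E}_r$-closed forms) contained in that of $H_A^{p,\,q}$ (the $\partial\bar\partial$-closed forms), the map is injective and its image is $\{\{\alpha\}_A\mid\alpha\ \text{is}\ E_r\overline{E}_r\text{-closed}\}$.

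The crucial point is that $\mbox{Im}\,\partial+\mbox{Im}\,\bar\partial$ is itself contained in the space of $E_r\overline{E}_r$-closed forms (this containment is exactly what makes $E_{r,\,A}^{p,\,q}$ well defined as a quotient). Because the $E_r\overline{E}_r$-closed forms form a linear subspace and passing to another representative of an Aeppli class only adds an element of $\mbox{Im}\,\partial+\mbox{Im}\,\bar\partial$, a form $\alpha$ admits an $E_r\overline{E}_r$-closed representative in its Aeppli class if and only if $\alpha$ itself is $E_r\overline{E}_r$-closed. Hence an ordinary Aeppli class lies in the subspace $E_{r,\,A}^{p,\,q}(X)$ precisely when any one (equivalently, every) of its representatives is $E_r\overline{E}_r$-closed.

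With this in hand the two equalities are immediate. Working inside $H_A^{n-1,\,n-1}(X,\,\R)$ and viewing $E_{2,\,A}^{n-1,\,n-1}(X,\,\R)$ as a subspace, a class of ${\cal G}_X$ has the form $\{\omega^{n-1}\}_A$ with $\omega$ Gauduchon, and it lies in $E_{2,\,A}^{n-1,\,n-1}(X,\,\R)$ iff $\omega^{n-1}$ is $E_2\overline{E}_2$-closed, which by Proposition \ref{Prop:sG-HS_E_2A}(i) holds iff $\omega$ is sG. Since every sG metric is Gauduchon, ${\cal G}_X\cap E_{2,\,A}^{n-1,\,n-1}(X,\,\R)$ is exactly the set of classes $\{\omega^{n-1}\}_A$ with $\omega$ sG, i.e. the image of ${\cal SG}_X$; injectivity of the inclusion then yields ${\cal SG}_X={\cal G}_X\cap E_{2,\,A}^{n-1,\,n-1}(X,\,\R)$. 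The H-S identity is proved verbatim with $r=3$, with $\omega$ in place of $\omega^{n-1}$, ``SKT'' in place of ``Gauduchon'', and Proposition \ref{Prop:sG-HS_E_2A}(ii) in place of (i); here the inclusion needed to close the argument is that $E_3\overline{E}_3$-closedness implies $E_1\overline{E}_1$-closedness, i.e. every H-S metric is SKT.

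The argument is essentially formal once the framework is set up, so I expect the only genuine obstacle to be getting the second paragraph exactly right: the upgrade from ``the Aeppli class has an $E_r\overline{E}_r$-closed representative'' to ``the metric form itself is $E_r\overline{E}_r$-closed'' rests entirely on the containment $\mbox{Im}\,\partial+\mbox{Im}\,\bar\partial\subset\{E_r\overline{E}_r\text{-closed forms}\}$. Without it one could only locate some representative that is $E_r\overline{E}_r$-closed, which need not be a power of a metric, and the passage back to the sG/H-S property would fail.
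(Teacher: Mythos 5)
Your proposal is correct and takes essentially the same route as the paper: the paper's own proof is a one-line appeal to Proposition \ref{Prop:sG-HS_E_2A}, Definition \ref{Def:E_rA_pos-cones} and the inclusions (\ref{eqn:A_seq-inclusions}), and your argument is exactly the detailed version of that appeal. In particular, your key step---that a Gauduchon (resp.\ SKT) class lies in the subspace $E_{2,A}^{n-1,\,n-1}(X,\,\R)$ (resp.\ $E_{3,A}^{1,\,1}(X,\,\R)$) if and only if $\omega^{n-1}$ (resp.\ $\omega$) is itself $E_2\overline{E}_2$-closed (resp.\ $E_3\overline{E}_3$-closed), because $\mbox{Im}\,\partial+\mbox{Im}\,\bar\partial$ consists of $E_r\overline{E}_r$-closed forms---is precisely the content that the paper compresses into the phrase ``the above inclusions.''
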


\noindent {\it Proof.} It follows at once from Proposition \ref{Prop:sG-HS_E_2A}, Definition \ref{Def:E_rA_pos-cones} and the above inclusions.  \hfill $\Box$

\begin{Cor}\label{Cor:cone-equalities} The following equivalences hold:  \begin{eqnarray*}{\cal SG}_X =  {\cal G}_X & \iff & E_{2,A}^{n-1,\,n-1}(X) = E_{1,A}^{n-1,\,n-1}(X) \iff e_{2,A}^{n-1,\,n-1} = e_{1,A}^{n-1,\,n-1} \\
    & \iff & \mbox{$X$ is an sGG manifold}\end{eqnarray*} and   \begin{eqnarray*}{\cal HS}_X = {\cal SKT}_X & \iff & E_{3,A}^{1,\,1}(X) = E_{1,A}^{1,\,1}(X) \iff e_{3,A}^{1,\,1} = e_{1,A}^{1,\,1} \\
   & \iff & \mbox{every SKT metric on $X$ is Hermitian-symplectic (H-S)},\end{eqnarray*} where $e$ stands each time for the dimension of the corresponding higher-page Aeppli cohomology space of $X$ denoted by $E$.

\end{Cor}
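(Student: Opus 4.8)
The plan is to treat the two chains of equivalences in parallel, since the sG/Gauduchon statement in bidegree $(n-1,\,n-1)$ and the H-S/SKT statement in bidegree $(1,\,1)$ become formally identical once Proposition \ref{Prop:sG-HS_E_2A} is invoked. I will lean on two structural facts. First, by (\ref{eqn:A_seq-inclusions}) each $E_{r,\,A}^{p,\,q}(X)$ is a \emph{subspace} of $E_{1,\,A}^{p,\,q}(X)=H_A^{p,\,q}(X)$. Second, the $E_r\overline{E}_r$-closed $(p,\,q)$-forms constitute a vector subspace of $C^\infty_{p,\,q}(X)$ that already contains $\mbox{Im}\,\partial+\mbox{Im}\,\bar\partial$; consequently the image of $E_{r,\,A}^{p,\,q}(X)$ in $H_A^{p,\,q}(X)$ is precisely the set of Aeppli classes possessing an $E_r\overline{E}_r$-closed representative, and a \emph{given} $\partial\bar\partial$-closed form is $E_r\overline{E}_r$-closed if and only if its Aeppli class lies in that subspace (adding elements of $\mbox{Im}\,\partial+\mbox{Im}\,\bar\partial$ preserves $E_r\overline{E}_r$-closedness).

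First I would record the trivial equivalence between equality of cohomology spaces and equality of their dimensions: as $E_{2,\,A}^{n-1,\,n-1}(X)$ (resp. $E_{3,\,A}^{1,\,1}(X)$) is a subspace of the finite-dimensional $E_{1,\,A}^{n-1,\,n-1}(X)$ (resp. $E_{1,\,A}^{1,\,1}(X)$), the two coincide exactly when the dimensions agree; this simultaneously yields the inequalities $e_{2,\,A}^{n-1,\,n-1}\le e_{1,\,A}^{n-1,\,n-1}$ and $e_{3,\,A}^{1,\,1}\le e_{1,\,A}^{1,\,1}$. Next I would establish the equivalence between the geometric condition and the equality of cones using no more than the second structural fact and Lemma \ref{Lem:cone-intersections}. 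For the sGG case: if $X$ is sGG, every Gauduchon $\omega$ is sG, so $\omega^{n-1}$ is $E_2\overline{E}_2$-closed by Proposition \ref{Prop:sG-HS_E_2A}(i), whence $\{\omega^{n-1}\}\in E_{2,\,A}^{n-1,\,n-1}(X,\,\R)$; thus ${\cal G}_X\subset E_{2,\,A}^{n-1,\,n-1}(X,\,\R)$ and Lemma \ref{Lem:cone-intersections} gives ${\cal SG}_X={\cal G}_X\cap E_{2,\,A}^{n-1,\,n-1}(X,\,\R)={\cal G}_X$. Conversely, ${\cal SG}_X={\cal G}_X$ forces each Gauduchon class into $E_{2,\,A}^{n-1,\,n-1}(X,\,\R)$, so each $\omega^{n-1}$ is $E_2\overline{E}_2$-closed and $X$ is sGG. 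The H-S/SKT case is verbatim via Proposition \ref{Prop:sG-HS_E_2A}(ii); note that it also covers the case where $X$ carries no SKT metric, both ${\cal HS}_X$ and ${\cal SKT}_X$ then being empty and the statement ``every SKT metric is H-S'' holding vacuously.

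It remains to connect cone equality with cohomology equality. The direction ``cohomology equality $\Rightarrow$ cone equality'' is immediate from Lemma \ref{Lem:cone-intersections}: if $E_{2,\,A}^{n-1,\,n-1}(X)=E_{1,\,A}^{n-1,\,n-1}(X)$ then ${\cal SG}_X={\cal G}_X\cap E_{2,\,A}^{n-1,\,n-1}(X,\,\R)={\cal G}_X$, and likewise for H-S/SKT. The converse is the crux, and I expect it to be the main obstacle, because it requires upgrading an equality of \emph{cones of classes of positive forms} into an equality of full linear subspaces. Here I would invoke the openness from Lemma \ref{Lem:E_rA_pos-cones}: a nonempty open convex cone in a finite-dimensional real vector space has full linear span. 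Since Gauduchon metrics always exist ([Gau77]), ${\cal G}_X$ is nonempty and open in $E_{1,\,A}^{n-1,\,n-1}(X,\,\R)$, hence spans it; as ${\cal SG}_X={\cal G}_X$ is contained in the subspace $E_{2,\,A}^{n-1,\,n-1}(X,\,\R)$, that subspace is everything, and complexification yields $E_{2,\,A}^{n-1,\,n-1}(X)=E_{1,\,A}^{n-1,\,n-1}(X)$.

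The identical spanning argument delivers the H-S/SKT converse whenever an SKT metric exists, ${\cal SKT}_X$ then being a nonempty open cone in $E_{1,\,A}^{1,\,1}(X,\,\R)$ that spans it. The delicate point, and the genuine difference from the Gauduchon case, is that SKT metrics need not exist: when ${\cal SKT}_X=\emptyset$ the cone equality and the vacuous metric statement give no purchase on the subspace $E_{3,\,A}^{1,\,1}(X)$, so this degenerate branch must be argued separately. In contrapositive form one wants to convert a $\partial\bar\partial$-closed $(1,\,1)$-form that fails to be $E_3\overline{E}_3$-closed into a \emph{positive} such form, i.e. into an SKT metric that is not H-S, and this is exactly what requires the positive cone of $\partial\bar\partial$-closed $(1,\,1)$-forms to be nonempty. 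Assembling the three equivalences in each of the two bidegrees then completes the proof of both displayed chains.
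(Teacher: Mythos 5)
Your proposal is correct where the paper's own proof is correct, and it takes essentially the same route: the paper's entire proof is the one-liner ``It follows at once from Lemmas \ref{Lem:E_rA_pos-cones} and \ref{Lem:cone-intersections}'', and what you wrote is precisely the intended unwinding of that citation --- Lemma \ref{Lem:cone-intersections} together with the observation that $E_r\overline{E}_r$-closedness of a $\partial\bar\partial$-closed form depends only on its Aeppli class for the implications out of the cohomological equality, and the spanning property of a nonempty open convex cone (Lemma \ref{Lem:E_rA_pos-cones}, the openness of ${\cal G}_X$ from [Pop15], and the existence of Gauduchon metrics from [Gau77]) for the converse direction.

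The one branch you could not close --- the case ${\cal SKT}_X=\emptyset$ --- is a defect of the corollary as stated and of the paper's one-line proof, not of your argument. The spanning argument is the only available input for the implication ${\cal HS}_X={\cal SKT}_X\Rightarrow E_{3,A}^{1,\,1}(X)=E_{1,A}^{1,\,1}(X)$, and it requires a nonempty cone. Unlike the Gauduchon cone, which is never empty, the SKT cone can be empty in complex dimension $\geq 3$; in that case ${\cal HS}_X={\cal SKT}_X$ and the statement ``every SKT metric on $X$ is H-S'' hold vacuously, while nothing forces $e_{3,A}^{1,\,1}=e_{1,A}^{1,\,1}$. So the second chain of equivalences must be read with the implicit hypothesis that $X$ carries at least one SKT metric, and you were right to flag this; your only slip is the closing sentence claiming the proof of both chains is complete, which overstates what was established, since that branch cannot be closed with the tools the paper provides.
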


\noindent {\it Proof.} It follows at once from Lemmas \ref{Lem:E_rA_pos-cones} and Lemmas \ref{Lem:cone-intersections}.  \hfill $\Box$


\section{Appendix}\label{section:Appendix} We refer to the appendix of [Pop19] for the details of the inductive construction of a Hodge theory for the pages $E_r$, with $r\geq 3$, of the Fr\"olicher spectral sequence. Here, we will only remind the reader of the conclusion of that construction and will point out a reformulation of it that was used in the present paper.

Let $X$ be an $n$-dimensional compact complex manifold. We fix an arbitrary Hermitian metric $\omega$ on $X$. As recalled in $\S.$\ref{subsection:duality_Er}, for every bidegree $(p,\,q)$, the {\it $\omega$-harmonic spaces} (also called {\it $E_r$-harmonic spaces}) $$\dots\subset{\cal H}_{r+1}^{p,\,q}\subset{\cal H}_r^{p,\,q}\subset\dots\subset{\cal H}_1^{p,\,q}\subset C^\infty_{p,\,q}(X)$$

\noindent were constructed by induction on $r\in\N_{>0}$ in [Pop17, $\S.3.2$] such that every ${\cal H}_r^{p,\,q}$ is isomorphic to the corresponding space $E_r^{p,\,q}(X)$ featuring on the $r^{th}$ page of the Fr\"olicher spectral sequence of $X$.

Moreover, pseudo-differential ``Laplacians'' $\widetilde\Delta^{(r+1)}:{\cal H}_r^{p,\,q}\longrightarrow{\cal H}_r^{p,\,q}$ were inductively constructed in the appendix to [Pop19] such that $$\ker\widetilde\Delta^{(r)} = {\cal H}_r^{p,\,q},  \hspace{3ex} r\in\N_{>0},$$

\noindent where $\widetilde\Delta^{(1)}=\Delta''=\bar\partial\bar\partial^\star + \bar\partial^\star\bar\partial$ is the usual $\bar\partial$-Laplacian.

The conclusion of the construction in the appendix to [Pop19] was the following statement. It gives a $3$-space orthogonal decomposition of each space $C^\infty_{p,\,q}(X)$, for every fixed $r\in\N_{>0}$, that parallels the standard decomposition $C^\infty_{p,\,q}(X)=\ker\Delta''\oplus\mbox{Im}\,\bar\partial\oplus\mbox{Im}\,\bar\partial^\star$ for $r=1$.

\begin{Prop}(Corollary 4.6 in [Pop19])\label{Prop:appendix_r_3-space_decomp} Let $(X,\,\omega)$ be a compact complex $n$-dimensional Hermitian manifold. For every $r\in\N_{>0}$, put $D_{r-1}:=((\widetilde\Delta^{(1)})^{-1}\bar\partial^\star\partial)\dots((\widetilde\Delta^{(r-1)})^{-1}\bar\partial^\star\partial)$ and $D_0=\mbox{Id}$.

\vspace{1ex}

(i)\, For all $r\in\N_{>0}$ and all $(p,\,q)$, the kernel of $\widetilde\Delta^{(r+1)}:C^\infty_{p,\,q}(X)\longrightarrow C^\infty_{p,\,q}(X)$ is given by \begin{eqnarray*} \ker\widetilde\Delta^{(r+1)} & = & \bigg(\ker(p_r\partial D_{r-1})\cap\ker(\partial D_{r-1}p_r)^\star\bigg)\cap\bigg(\ker(p_{r-1}\partial D_{r-2})\cap\ker(\partial D_{r-2}p_{r-1})^\star\bigg)\\
 & \vdots & \\
 & \cap & \bigg(\ker(p_1\partial)\cap\ker(\partial p_1)^\star\bigg)\cap\bigg(\ker\bar\partial\cap\ker\bar\partial^\star\bigg).\end{eqnarray*}


\vspace{1ex}

(ii)\, For all $r\in\N_{>0}$ and all $(p,\,q)$, the following orthogonal $3$-space decomposition (in which the sums inside the big parentheses need not be orthogonal or even direct) holds: \begin{eqnarray}\label{eqn:appendix_r_3-space_decomp}\nonumber C^\infty_{p,\,q}(X) & = & \ker\widetilde\Delta^{(r+1)}\oplus\bigg(\mbox{Im}\,\bar\partial +  \mbox{Im}\,(\partial p_1) + \mbox{Im}\,(\partial D_1p_2) + \dots + \mbox{Im}\,(\partial D_{r-1}p_r)\bigg) \\
 & \oplus & \bigg(\mbox{Im}\,\bar\partial^\star +  \mbox{Im}\,(p_1\partial)^\star + \mbox{Im}\,(p_2\partial D_1)^\star + \dots + \mbox{Im}\,(p_r\partial D_{r-1})^\star\bigg),\end{eqnarray}

\noindent where $\ker\widetilde\Delta^{(r+1)}\oplus(\mbox{Im}\,\bar\partial +  \mbox{Im}\,(\partial p_1) + \mbox{Im}\,(\partial D_1p_2) + \dots + \mbox{Im}\,(\partial D_{r-1}p_r)) = \ker\bar\partial\cap\ker(p_1\partial)\cap\ker(p_2\partial D_1)\cap\dots\cap\ker(p_r\partial D_{r-1})$ and $\ker\widetilde\Delta^{(r+1)}\oplus(\mbox{Im}\,\bar\partial^\star +  \mbox{Im}\,(p_1\partial)^\star + \mbox{Im}\,(p_2\partial D_1)^\star + \dots + \mbox{Im}\,(p_r\partial D_{r-1})^\star) = \ker\bar\partial^\star\cap\ker(\partial p_1)^\star\cap\ker(\partial D_1p_2)^\star\cap\dots\cap\ker(\partial D_{r-1}p_r)^\star$.

For each $r\in\N_{>0}$, $p_r=p_r^{p,\,q}$ stands for the $L^2_\omega$-orthogonal projection onto ${\cal H}^{p,\,q}_r$.

\end{Prop}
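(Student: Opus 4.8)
\noindent The plan is to induct on $r$, with base case the classical Hodge decomposition $C^\infty_{p,\,q}(X)=\ker\Delta''\oplus\mbox{Im}\,\bar\partial\oplus\mbox{Im}\,\bar\partial^\star$ and $\ker\widetilde\Delta^{(1)}=\ker\bar\partial\cap\ker\bar\partial^\star$, feeding in at each step the orthogonal three-space splitting of ${\cal H}_r^{p,\,q}$ supplied by (iv) of Proposition \ref{Prop:H_r_construction}. Throughout, I write $V_r$ and $W_r$ for the two ``big parentheses'' in (\ref{eqn:appendix_r_3-space_decomp}), so that the inductive hypothesis reads $C^\infty_{p,\,q}(X)={\cal H}_r^{p,\,q}\oplus V_{r-1}\oplus W_{r-1}$ (orthogonal), together with the combined identity ${\cal H}_r^{p,\,q}\oplus V_{r-1}=\ker\bar\partial\cap\dots\cap\ker(p_{r-1}\partial D_{r-2})$ and its $W$-analogue.

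I would first dispatch the kernel formula (i). By (iii) of Proposition \ref{Prop:H_r_construction} one has $\widetilde\Delta^{(r+1)}=(\partial D_{r-1}p_r)(\partial D_{r-1}p_r)^\star+(p_r\partial D_{r-1})^\star(p_r\partial D_{r-1})+\widetilde\Delta^{(r)}$, and unwinding the recursion exhibits $\widetilde\Delta^{(r+1)}$ as a finite sum of non-negative self-adjoint operators of the form $TT^\star$ or $T^\star T$, terminating in $\widetilde\Delta^{(1)}=\Delta''$. For such a sum, $\langle\langle\widetilde\Delta^{(r+1)}\alpha,\,\alpha\rangle\rangle=0$ forces every summand to annihilate $\alpha$, so $\ker\widetilde\Delta^{(r+1)}$ is the intersection of the kernels of all the summands; reading these off term by term yields exactly the intersection in (i).

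For (ii) I would refine the inductive splitting by applying (iv) of Proposition \ref{Prop:H_r_construction} to its kernel summand, namely ${\cal H}_r^{p,\,q}=\mbox{Im}\,d_r^{(\omega)}\oplus{\cal H}_{r+1}^{p,\,q}\oplus\mbox{Im}\,(d_r^{(\omega)})^\star$ with $d_r^{(\omega)}=p_r\partial D_{r-1}p_r$. This produces an orthogonal five-term decomposition; regrouping $V_r':=V_{r-1}+\mbox{Im}\,d_r^{(\omega)}$ and $W_r':=W_{r-1}+\mbox{Im}\,(d_r^{(\omega)})^\star$ gives an orthogonal three-space decomposition $C^\infty_{p,\,q}(X)={\cal H}_{r+1}^{p,\,q}\oplus V_r'\oplus W_r'$. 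Here $V_r'\perp W_r'$ comes for free, since $\mbox{Im}\,d_r^{(\omega)}$ and $\mbox{Im}\,(d_r^{(\omega)})^\star$ lie inside ${\cal H}_r^{p,\,q}$ and are therefore orthogonal to $V_{r-1}\oplus W_{r-1}$, while they are mutually orthogonal by (iv).

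The hard part will be to identify $V_r'$ with the space $V_r$ written in the statement, which reduces to the containment $\mbox{Im}\,(\partial D_{r-1}p_r)\subset{\cal H}_r^{p,\,q}\oplus V_{r-1}$. Writing $\partial D_{r-1}p_r\phi=d_r^{(\omega)}\phi+(\mbox{Id}-p_r)\partial D_{r-1}p_r\phi$ and noting that $\mbox{Id}-p_r$ projects onto $V_{r-1}\oplus W_{r-1}$, this containment forces $(\mbox{Id}-p_r)\partial D_{r-1}p_r\phi$ into $V_{r-1}$, whence $V_r'=V_{r-1}+\mbox{Im}\,(\partial D_{r-1}p_r)=V_r$. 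The containment itself is where the inductive construction of $D_{r-1}=((\widetilde\Delta^{(1)})^{-1}\bar\partial^\star\partial)\cdots((\widetilde\Delta^{(r-1)})^{-1}\bar\partial^\star\partial)$ is essential: for $\alpha=p_r\phi\in{\cal H}_r^{p,\,q}$ the form $\partial D_{r-1}\alpha$ is precisely the representative $\partial u_{r-1}$ of the class $d_r(\{\alpha\}_{E_r})\in E_r^{p+r,\,q-r+1}(X)$ assembled from the tower of (i)--(ii) of Proposition \ref{Prop:E_r-closed-exact_conditions}, and as an element of the $r$-th page it is $E_r$-closed; by the combined identity of the inductive hypothesis, $E_r$-closedness is exactly membership in $\ker\bar\partial\cap\dots\cap\ker(p_{r-1}\partial D_{r-2})={\cal H}_r^{p,\,q}\oplus V_{r-1}$. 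The parallel identification $W_r'=W_r$ then follows by applying the Hodge star $\star_\omega$ together with conjugation, exactly as in Corollary \ref{Cor:E_r_duality}, and the two combined identities closing (ii) drop out by intersecting the three-space decomposition with the kernels from (i) and invoking the orthogonality just established.
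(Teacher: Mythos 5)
Part (i) of your argument is sound: granting the formula for $\widetilde\Delta^{(r+1)}$ in (iii) of Proposition \ref{Prop:H_r_construction}, the recursion exhibits it as a finite sum of non-negative self-adjoint operators of the form $TT^\star$ or $T^\star T$, and the kernel of such a sum is the intersection of the kernels of the individual factors $T$, which is exactly the intersection in (i). Note also that the paper itself contains no proof of this proposition: it is quoted verbatim as Corollary 4.6 of [Pop19], and the appendix explicitly defers all details to that reference, so your attempt is a reconstruction rather than a variant of an argument given in the text. Your architecture for (ii) -- induct, split ${\cal H}_r^{p,\,q}$ by (iv) of Proposition \ref{Prop:H_r_construction}, regroup the five summands, then identify $V_r'$ with $V_r$ -- is indeed the natural one.

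The genuine gap is at the step you yourself flag as ``the hard part''. You justify the containment $\mbox{Im}\,(\partial D_{r-1}p_r)\subset{\cal H}_r^{p,\,q}\oplus V_{r-1}$ by asserting that ``by the combined identity of the inductive hypothesis, $E_r$-closedness is exactly membership in $\ker\bar\partial\cap\dots\cap\ker(p_{r-1}\partial D_{r-2})$''. But your inductive hypothesis only equates two \emph{metric-dependent} subspaces, namely ${\cal H}_r^{p,\,q}\oplus V_{r-1}$ and the kernel intersection; it says nothing about the metric-free, \emph{existential} tower conditions of Proposition \ref{Prop:E_r-closed-exact_conditions} that define $E_r$-closedness. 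The identity ${\cal Z}_r^{p,\,q}=\ker\bar\partial\cap\ker(p_1\partial)\cap\dots\cap\ker(p_{r-1}\partial D_{r-2})$ is precisely the content of Proposition \ref{Prop:appendix_3-space_decomp_E-F}, which the paper \emph{derives from} the statement you are proving, so as written your argument is circular at its decisive step. Establishing that identity is the technical heart of [Pop19]: one must show that whenever \emph{some} choice of forms $u_1,\dots,u_{l}$ solves the tower, the canonical Green-operator choices built from the $(\widetilde\Delta^{(j)})^{-1}\bar\partial^\star\partial$ also solve it, absorbing at each stage the ambiguity $u_l\mapsto u_l+w$ (already for $r=3$, the condition $p_2\partial D_1\alpha=0$ is equivalent to $\partial D_1\alpha\in\mbox{Im}\,\bar\partial+\partial(\ker\bar\partial)$, \emph{not} to $\partial D_1\alpha\in\mbox{Im}\,\bar\partial$, and reconciling this with the existential tower takes an argument). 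Two further unproved ingredients of the same kind: your claim that, for $\alpha\in{\cal H}_r^{p,\,q}$, the form $\partial D_{r-1}\alpha$ is a tower representative of $d_r(\{\alpha\}_{E_r})$ does not follow from (ii) of Proposition \ref{Prop:H_r_construction}, which only concerns the projected operator $p_r\partial D_{r-1}p_r$ through unspecified isomorphisms; and your closing sentence tacitly needs $p_r\partial D_{r-1}$ to vanish on $V_{r-1}$ (equivalently $\partial D_{r-1}(V_{r-1})\perp{\cal H}_r^{p,\,q}$), both to get $V_r\subset\ker(p_r\partial D_{r-1})$ and to run the reverse inclusion of the combined identity at level $r+1$. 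To make the induction close, the hypothesis would have to be strengthened to carry the identifications ${\cal Z}_r^{p,\,q}={\cal H}_r^{p,\,q}\oplus V_{r-1}$ and ${\cal C}_r^{p,\,q}=V_{r-1}$ and prove them along with the decomposition -- which amounts to redoing the appendix of [Pop19] rather than quoting it.
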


We will now cast the $3$-space decomposition (\ref{eqn:appendix_r_3-space_decomp}) in the terms used in the present paper. Recall that in the proof of Lemma \ref{Lem:E_r-exact_E_r-bar-exact_sum}, we defined the following vector spaces for every $r\in\N_{>0}$ and every bidegree $(p,\,q)$ based on the terminology introduced in (iv) of Definition \ref{Def:E_rE_r-bar}:  \begin{eqnarray*} {\cal E}^{p,\,q}_{\partial,\,r} &:= & \{\alpha\in C^\infty_{p,\,q}(X)\,\mid\,\partial\alpha\hspace{1ex}\mbox{reaches 0 in at most r steps}\}, \\
 {\cal E}^{p,\,q}_{\bar\partial,\,r} & := & \{\beta\in C^\infty_{p,\,q}(X)\,\mid\,\bar\partial\beta\hspace{1ex}\mbox{reaches 0 in at most r steps}\}.\end{eqnarray*}

When a Hermitian metric $\omega$ has been fixed on $X$ and the adjoint operators $\partial^\star$ and $\bar\partial^\star$ with respect to $\omega$ have been considered, we define the analogous subspaces ${\cal E}^{p,\,q}_{\partial^\star,\,r}$ and ${\cal E}^{p,\,q}_{\bar\partial^\star,\,r}$ of $C^\infty_{p,\,q}(X)$ by replacing $\partial$ with $\partial^\star$ and $\bar\partial$ with $\bar\partial^\star$ in the definitions of ${\cal E}^{p,\,q}_{\partial,\,r}$ and ${\cal E}^{p,\,q}_{\bar\partial,\,r}$.

Part (ii) of Proposition \ref{Prop:appendix_r_3-space_decomp} can be reworded as follows.

\begin{Prop}\label{Prop:appendix_3-space_decomp_E-F} Let $(X,\,\omega)$ be a compact complex $n$-dimensional Hermitian manifold. For every $r\in\N_{>0}$ and for all $p,q\in\{0,\dots , n\}$, the following orthogonal $3$-space decomposition (in which the sums inside the big parantheses need not be orthogonal or even direct) holds: \begin{eqnarray}\label{eqn:appendix_4-space_decomp_E-F}C^\infty_{p,\,q}(X) = {\cal H}_r^{p,\,q}\oplus\bigg(\mbox{Im}\,\bar\partial + \partial({\cal E}^{p-1,\,q}_{\bar\partial,\,{r-1}})\bigg)\oplus\bigg(\partial^\star({\cal E}^{p+1,\,q}_{\bar\partial^\star,\,r-1}) + \mbox{Im}\,\bar\partial^\star\bigg),\end{eqnarray}

  \noindent where ${\cal H}_r^{p,\,q}$ is the $E_r$-harmonic space induced by $\omega$ (see $\S.$\ref{subsection:duality_Er} and earlier in this appendix) and the next two big parantheses are the spaces of $E_r$-exact $(p,\,q)$-forms, respectively $E_r^\star$-exact $(p,\,q)$-forms: $$\mbox{Im}\,\bar\partial + \partial({\cal E}^{p-1,\,q}_{\bar\partial,\,{r-1}}) = {\cal C}_r^{p,\,q}  \hspace{3ex} \mbox{and} \hspace{3ex} \partial^\star({\cal E}^{p+1,\,q}_{\bar\partial^\star,\,r-1}) + \mbox{Im}\,\bar\partial^\star = \,^{\star}{\cal C}_r^{p,\,q}.$$

\noindent Moreover, we have \begin{eqnarray*}{\cal Z}_r^{p,\,q} & = & {\cal H}_r^{p,\,q}\oplus\bigg(\mbox{Im}\,\bar\partial + \partial({\cal E}^{p-1,\,q}_{\bar\partial,\,{r-1}})\bigg) = {\cal H}_r^{p,\,q}\oplus{\cal C}_r^{p,\,q},\\
\,^{\star}{\cal Z}_r^{p,\,q} & = &{\cal H}_r^{p,\,q}\oplus\bigg(\partial^\star({\cal E}^{p+1,\,q}_{\bar\partial^\star,\,r-1}) + \mbox{Im}\,\bar\partial^\star\bigg) = {\cal H}_r^{p,\,q}\oplus\,^{\star}{\cal C}_r^{p,\,q}\end{eqnarray*}

\noindent where ${\cal Z}_r^{p,\,q}$ and $\,^{\star}{\cal Z}_r^{p,\,q}$ are the spaces of smooth $E_r$-closed, resp. $E_r^\star$-closed, $(p,\,q)$-forms.

\end{Prop}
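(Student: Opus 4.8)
The plan is to deduce the decomposition (\ref{eqn:appendix_4-space_decomp_E-F}) from part (ii) of Proposition~\ref{Prop:appendix_r_3-space_decomp} after a shift of index, and then to match the summands term by term with the spaces named in the statement. For $r\geq 2$ I would apply Proposition~\ref{Prop:appendix_r_3-space_decomp}(ii) with $r$ replaced by $r-1$; since $\ker\widetilde\Delta^{(r)}={\cal H}_r^{p,\,q}$, this gives the orthogonal splitting $C^\infty_{p,\,q}(X)={\cal H}_r^{p,\,q}\oplus S_\partial\oplus S_{\partial^\star}$, where $S_\partial=\mbox{Im}\,\bar\partial+\sum_{k=1}^{r-1}\mbox{Im}\,(\partial D_{k-1}p_k)$ and $S_{\partial^\star}$ is its formal-adjoint analogue. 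The case $r=1$ is the classical decomposition $C^\infty_{p,\,q}(X)=\ker\Delta''\oplus\mbox{Im}\,\bar\partial\oplus\mbox{Im}\,\bar\partial^\star$, in which the ${\cal E}$-spaces of index $0$ collapse to kernels and the statement is immediate.

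The two displayed identities I would dispatch first, as they are essentially definitional. By (iii) of Proposition~\ref{Prop:E_r-closed-exact_conditions}, a $(p,\,q)$-form is $E_r$-exact precisely when it equals $\partial\zeta+\bar\partial\xi$ with $\zeta\in{\cal E}^{p-1,\,q}_{\bar\partial,\,r-1}$ and $\xi$ arbitrary, which is exactly the claim $\mbox{Im}\,\bar\partial+\partial({\cal E}^{p-1,\,q}_{\bar\partial,\,r-1})={\cal C}_r^{p,\,q}$ (it already surfaces in the proof of Lemma~\ref{Lem:E_r-exact_E_r-bar-exact_sum}); the identity $\partial^\star({\cal E}^{p+1,\,q}_{\bar\partial^\star,\,r-1})+\mbox{Im}\,\bar\partial^\star={}^{\star}{\cal C}_r^{p,\,q}$ is then the defining description of the $E_r^\star$-exact forms, the notion conjugate-Hodge-dual to $E_r$-exactness via $\alpha\mapsto\star\bar\alpha$ as in Corollary~\ref{Cor:E_r_duality}.

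The heart of the matter is the identification $S_\partial={\cal C}_r^{p,\,q}$ (and, by the argument symmetric under $\alpha\mapsto\star\bar\alpha$, $S_{\partial^\star}={}^{\star}{\cal C}_r^{p,\,q}$), and I expect this to be the main obstacle, since it equates an analytically defined sum of ranges of the pseudo-differential operators $\partial D_{k-1}p_k$ with the algebraically defined space of $E_r$-exact forms. I would prove it by induction on $r$ and by double inclusion. For ``$\subseteq$'' it suffices that each $\partial D_{k-1}p_k\beta$ be $E_r$-exact, i.e. that $D_{k-1}p_k\beta\in{\cal E}^{p-1,\,q}_{\bar\partial,\,r-1}$; this is forced by the tower of commutation relations built into the $D_{k-1}$ — the very relations making $d_r^{(\omega)}=p_r\partial D_{r-1}p_r$ realise $d_r$ in Proposition~\ref{Prop:H_r_construction} — whereby $\bar\partial(D_{k-1}p_k\beta)$ falls into $\mbox{Im}\,\partial$ and the resulting chain closes up in at most $r-1$ steps. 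For ``$\supseteq$'' I would use the refined Hodge decomposition of the spaces $\ker\bar\partial$ and their higher-page analogues: writing a given $\zeta\in{\cal E}^{p-1,\,q}_{\bar\partial,\,r-1}$ modulo $\mbox{Im}\,\bar\partial$ through the successive harmonic projections $p_k$ turns $\partial\zeta$ into a combination of terms $\partial D_{k-1}p_k(\,\cdot\,)$ plus a $\bar\partial$-exact remainder. The template is the case $r=2$, where $\ker\bar\partial={\cal H}_1^{p-1,\,q}\oplus\mbox{Im}\,\bar\partial$ yields $\partial\zeta=\partial p''\zeta-\bar\partial\partial\psi\in\mbox{Im}\,(\partial p'')+\mbox{Im}\,\bar\partial$ for $\zeta=p''\zeta+\bar\partial\psi$.

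Finally, for the ``moreover'' part the Hodge isomorphism ${\cal H}_r^{p,\,q}\simeq E_r^{p,\,q}(X)={\cal Z}_r^{p,\,q}/{\cal C}_r^{p,\,q}$ furnished by Propositions~\ref{Prop:H_r_construction} and~\ref{Prop:E_r-closedness-star-harmonicity} — the composite ${\cal H}_r^{p,\,q}\hookrightarrow{\cal Z}_r^{p,\,q}\twoheadrightarrow{\cal Z}_r^{p,\,q}/{\cal C}_r^{p,\,q}$ being bijective — gives the direct sum ${\cal Z}_r^{p,\,q}={\cal H}_r^{p,\,q}\oplus{\cal C}_r^{p,\,q}$, whose orthogonality is inherited from the orthogonal splitting above (both summands appearing there once $S_\partial={\cal C}_r^{p,\,q}$). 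The starred identity ${}^{\star}{\cal Z}_r^{p,\,q}={\cal H}_r^{p,\,q}\oplus{}^{\star}{\cal C}_r^{p,\,q}$ then follows by applying $\alpha\mapsto\star\bar\alpha$ to ${\cal Z}_r^{n-p,\,n-q}={\cal H}_r^{n-p,\,n-q}\oplus{\cal C}_r^{n-p,\,n-q}$, using that this map interchanges $E_r$- and $E_r^\star$-closedness (Corollary~\ref{Cor:E_r_duality}), sends ${\cal H}_r^{n-p,\,n-q}$ to ${\cal H}_r^{p,\,q}$, and sends ${\cal C}_r^{n-p,\,n-q}$ to ${}^{\star}{\cal C}_r^{p,\,q}$.
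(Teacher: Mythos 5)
Your proposal is correct and takes essentially the same approach as the paper: the paper gives no standalone proof of this proposition, presenting it as a mere rewording of Proposition \ref{Prop:appendix_r_3-space_decomp}(ii) (Corollary 4.6 of [Pop19]) after the index shift $r\mapsto r-1$, which is exactly your starting point. The term-by-term identifications you supply --- $\mbox{Im}\,\bar\partial+\sum_{k=1}^{r-1}\mbox{Im}\,(\partial D_{k-1}p_k)={\cal C}_r^{p,\,q}$ and its adjoint analogue, together with the ``moreover'' part obtained from the Hodge isomorphism ${\cal H}_r^{p,\,q}\simeq E_r^{p,\,q}(X)$ and the symmetry $\alpha\mapsto\star\bar\alpha$ --- are precisely the content the paper leaves implicit in the word ``reworded'', and your sketches of these identifications (the forward inclusion from the tower relations built into the $D_{k-1}$, the reverse one by successive harmonic projections as in your $r=2$ template) are sound.
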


\vspace{3ex}

\noindent {\bf Acknowledgements.} This work has been partially supported by the projects MTM2017-85649-P (AEI/FEDER, UE), and E22-17R ``\'Algebra y Geometr\'{\i}a'' (Gobierno de Arag\'on/FEDER)."

\vspace{2ex}

\noindent {\bf References.} \\

\noindent [Aep62]\, A. Aeppli ---{\it Some Exact Sequences in Cohomology Theory for K\"ahler Manifolds} --- Pacific J. Math. {\bf 12} (1962), 791-799. 

\vspace{1ex}

\noindent [AT13]\, D. Angella, A. Tomassini --- {\it On the $\partial\bar\partial$-Lemma and Bott-Chern Cohomology} --- Invent. Math. {\bf 192}, no. 1 (2013), 71-81.

\vspace{1ex}

\noindent [AT17] D. Angella, N. Tardini ---
{\it Quantitative and Qualitative Cohomological Properties for non-K\"ahler Manifolds} ---
Proc. Amer. Math. Soc. 145 (2017), 273--285.

\vspace{1ex}

\noindent [BC65]\, R. Bott, S.S. Chern --- {\it Hermitian Vector Bundles and the Equidistribution of the Zeroes of their Holomorphic Sections} --- Acta Math. {\bf 114} (1965), 71-112.

\vspace{1ex}

\noindent [Bis13]\, J.-M. Bismut --- {\it Hypoelliptic Laplacian and Bott-Chern Cohomology. A Theorem of Riemann-Roch-Grothendieck in Complex Geometry} --- Progress in Mathematics, 305, Birkh\"auser/Springer, Cham, 2013. xvi+203 pp. SBN: 978-3-319-00127-2; 978-3-319-00128-9.

\vspace{1ex}

\noindent [BP18]\, H. Bellitir, D. Popovici --- {\it Positivity Cones under Deformations of Complex Structures} --- Riv. Mat. Univ. Parma, Vol. {\bf 9} (2018), 133-176.

\vspace{1ex}

\noindent [CE53]\, E. Calabi, B. Eckmann --- {\it A Class of Compact, Complex Manifolds Which Are Not Algebraic} --- Ann. of Math. {\bf 58} (1953) 494-500.

\vspace{1ex}

\noindent [CFGU97]\, L.A. Cordero, M. Fern\'andez, A.Gray, L. Ugarte --- {\it A General Description of the Terms in the Fr\"olicher Spectral Sequence} --- Differential Geom. Appl. {\bf 7} (1997), no. 1, 75--84.

\vspace{1ex}

\noindent [DGMS75]\, P. Deligne, Ph. Griffiths, J. Morgan, D. Sullivan --- {\it Real Homotopy Theory of K\"ahler Manifolds} --- Invent. Math. {\bf 29} (1975), 245-274.

\vspace{1ex}

\noindent [Gau77]\, P. Gauduchon --- {\it Le th\'eor\`eme de l'excentricit\'e nulle} --- C. R. Acad. Sci. Paris, S\'er. A, {\bf 285} (1977), 387-390.

\vspace{1ex}

\noindent [Hir78]\, F. Hirzebruch --- {\it Topological Methods in Algebraic Geometry} --- Springer (1978).

\vspace{1ex}

\noindent [KQ20]\, M. Khovanov, Y. Qi --- {\it A Faithful Braid Group Action on the Stable Category of Tricomplexes} --- SIGMA, {\bf 16} (2020), 019, 32 pages.
\vspace{1ex}

\noindent [Mil20]\, A. Milivojevic --- {\it The Serre symmetry of Hodge numbers persists through all pages of the Froelicher spectral sequence of a compact complex manifold.} --- Complex Manifolds {\bf 7}, no. 1 (2020), 141-144.

\vspace{1ex}

\noindent [NT78]\, J. Neisendorfer, L. Taylor --- {\it Dolbeault Homotopy Theory} --- Trans. Amer. Math. Soc. {\bf 245} (1978), 183-210.

\vspace{1ex}

\noindent [Pop13]\, D. Popovici --- {\it Deformation Limits of Projective Manifolds: Hodge Numbers and Strongly Gauduchon Metrics} --- Invent. Math. {\bf 194} (2013), 515-534.

\vspace{1ex}

\noindent [Pop14]\, D. Popovici --- {\it Deformation Openness and Closedness of Various Classes of Compact Complex Manifolds; Examples} --- Ann. Sc. Norm. Super. Pisa Cl. Sci. (5), Vol. XIII (2014), 255-305.

\vspace{1ex}

\noindent [Pop15]\, D. Popovici --- {\it Aeppli Cohomology Classes Associated with Gauduchon Metrics on Compact Complex Manifolds} --- Bull. Soc. Math. France {\bf 143}, no. 3 (2015), 1-37.

\vspace{1ex}

\noindent [Pop16]\, D. Popovici --- {\it Degeneration at $E_2$ of Certain Spectral Sequences} ---  International Journal of Mathematics {\bf 27}, no. 13 (2016), DOI: 10.1142/S0129167X16501111. 

\vspace{1ex}

\noindent [Pop17]\, D. Popovici --- {\it Adiabatic Limit and the Fr\"olicher Spectral Sequence} --- Pacific Journal of Mathematics, Vol. 300, no. 1, 2019, dx.doi.org/10.2140/pjm.2019.300.121.

\vspace{1ex}
\noindent [Pop19]\, D. Popovici --- {\it Adiabatic Limit and Deformations of Complex Structures} --- arXiv e-print AG 1901.04087v2

\vspace{1ex}

\noindent [PSU20]\, D. Popovici, J. Stelzig, L. Ugarte ---  {\it Higher-Page Hodge Theory of Compact Complex Manifolds} --- arXiv:2001.02313v2. 

\vspace{1ex}

\noindent [PU18]\, D. Popovici, L. Ugarte --- {\it Compact Complex Manifolds with Small Gauduchon Cone} --- Proceedings of the London Mathematical Society (3) (2018) doi:10.1112/plms.12110.

\vspace{1ex}

\noindent [Sch07]\, M. Schweitzer --- {\it Autour de la cohomologie de Bott-Chern} --- arXiv:0709.3528v1.



\vspace{1ex}

\noindent [Ste20]\, J. Stelzig --- {\it On the Structure of Double Complexes} --- arXiv:1812.00865v2 (2020)

\vspace{1ex}

\noindent [ST10]\, J. Streets, G. Tian --- {\it A Parabolic Flow of Pluriclosed Metrics} --- International Mathematics Research Notices, {\bf 16}, (2010), 3101â??3133.

\vspace{1ex}
\noindent [TT17]\, N. Tardini, A. Tomassini --- {\it On geometric Bott-Chern formality and deformations} --- Annali di Matematica  {\bf 196} (1), (2017), doi: 10.1007/s10231-016-0575-6



\vspace{1ex}

\noindent [Voi02]\, C. Voisin --- {\it Hodge Theory and Complex Algebraic Geometry. I.} --- Cambridge Studies in Advanced Mathematics, 76, Cambridge University Press, Cambridge, 2002.

\vspace{3ex}

\noindent Institut de Math\'ematiques de Toulouse,       \hfill Mathematisches Institut

\noindent Universit\'e Paul Sabatier,                    \hfill  Ludwig-Maximilians-Universit\"at

\noindent 118 route de Narbonne, 31062 Toulouse, France  \hfill Theresienstr. 39, 80333 M\"unchen, Germany

\noindent Email: popovici@math.univ-toulouse.fr          \hfill Email: Jonas.Stelzig@math.lmu.de

\vspace{1ex}

\noindent and

\vspace{1ex}

\noindent Departamento de Matem\'aticas\,-\,I.U.M.A., 

\noindent Universidad de Zaragoza,

\noindent Campus Plaza San Francisco, 50009 Zaragoza, Spain

\noindent Email: ugarte@unizar.es

\end{document}